\documentclass[11pt,reqno,a4paper,english]{amsart}

\usepackage{a4wide}
\usepackage[utf8]{inputenc}  
\usepackage{dsfont}
\usepackage{xcolor}
\usepackage{comment}
\usepackage{enumerate}          
\usepackage{amssymb,amsmath,amsfonts,amsthm}            
\usepackage{pdfsync}
\usepackage{url}
\usepackage{hyperref}
\usepackage{mathrsfs}
\usepackage{graphicx,float,color}
\usepackage{epsfig}
\usepackage{cleveref}
\usepackage{microtype}
\usepackage{cancel}

\begin{document}
\theoremstyle{plain}
\newtheorem{theorem}{Theorem}[section]
\newtheorem{definition}[theorem]{Definition}
\newtheorem{proposition}[theorem]{Proposition}
\newtheorem{lemma}[theorem]{Lemma}
\newtheorem{corollary}[theorem]{Corollary}
\newtheorem{conjecture}[theorem]{Conjecture}
\newtheorem{remark}[theorem]{Remark}
\errorcontextlines=0
\newcommand{\SW}{\mathrm{SW}}
\newcommand{\KME}{\mathrm{KME}}
\newcommand{\LOT}{\mathrm{LOT}}
\renewcommand{\P}{\mathcal{P}}
\newcommand{\cst}{\mathrm{cst}}
\newcommand{\Id}{\text{Id}}
\newcommand{\R}{\mathbb{R}}
\newcommand{\sgn}{\mathrm{sgn}}
\newcommand{\M}{\mathbf{H}^m}
\newcommand{\Op}{{\rm Op}}
\newcommand{\J}{\mathcal{J}}
\newcommand{\Z}{\mathbb{Z}}
\newcommand{\N}{\mathbb{N}}
\newcommand{\supp}{\text{supp}}
\newcommand{\ord}{\text{ord}}
\newcommand{\X}{\mathcal{X}}
\newcommand{\e}{\varepsilon}
\newcommand{\bqn}{\begin{equation}}
\newcommand{\eqn}{\end{equation}}
\newcommand{\al}{\alpha}
\newcommand{\tor}{(2\mathbb{T})^{2}}
\newcommand{\er}{{\overrightarrow{e_{r}}}}
\newcommand{\et}{{\overrightarrow{e_{\theta}}}}
\newcommand{\red}{\color{red}}
\newcommand{\black}{\color{black}}
\renewcommand{\i}{\iota}
\newcommand{\todo}[1]{$\clubsuit$ {\tt #1}}
\newcommand{\B}{\mathcal{B}}
\newcommand{\F}{\mathbb{F}}
\newcommand{\Mom}{\mathcal{M}}
\newcommand{\Cay}{{\rm Cay}}
\newcommand{\spt}{\mathrm{spt}}
\newcommand{\Lip}{\mathrm{Lip}}
\newcommand{\Prob}{\mathcal P}
\newcommand{\Wass}{W}
\newcommand{\SlicedWass}{SW}
\newcommand{\proj}{proj}

\newcommand{\Rsp}{\mathbb R}
\newcommand{\Sph}{\mathbb S}
\newcommand{\dd}{\mathrm d}
\newcommand{\LL}{L}
\newcommand{\1}{\mathbbm 1}

\newcommand{\pushforward}[2]{#1_{\#}#2}
\newcommand{\sca}[2]{\left\langle #1,#2\right\rangle}
\newcommand{\abs}[1]{\left|#1\right|}
\newcommand{\nr}[1]{\left\Vert#1\right\Vert}

\setcounter{tocdepth}{1}

\title{Embedding large subsets of Wasserstein spaces into~Banach~spaces}

\author{Cyril Letrouit}\address{Cyril Letrouit. Université Paris-Saclay, CNRS, Inria, Laboratoire de mathématiques d’Orsay, 91405, Orsay, France} \email{cyril.letrouit@universite-paris-saclay.fr}

\author{Quentin Mérigot}\address{Quentin Mérigot. Université Paris-Saclay, CNRS, Inria, Laboratoire de mathématiques d’Orsay, 91405, Orsay, France /
DMA, École normale supérieure, Université PSL, CNRS, 75005 Paris, France} \email{quentin.merigot@universite-paris-saclay.fr}
\date{\today}

\maketitle

\begin{abstract}
Embedding subsets of Wasserstein spaces into linear spaces is a problem of both theoretical and computational interest. 
In this work, we prove that three classical and natural embeddings of Wasserstein spaces into Banach spaces of non-trivial type are bi-Hölder on subsets of Wasserstein spaces given by moment bounds. These embeddings, all considered in multiple applications, are the linearized optimal transport embedding, the sliced-Wasserstein embedding, and a particular kernel mean embedding. 
Our results contrast with known obstructions to embeddin the full Wasserstein space into Banach spaces of non-trivial type. We also show the sharpness of our results regarding the moments required for our conclusions to hold.
\end{abstract}

\section{Introduction}
Optimal transport has found many applications both within and outside mathematics. One of its most useful features is that it allows one to define geometric distances, the so-called Wasserstein distances, which can be used to compare probability measures on a fixed metric space. In this article, we work on the Euclidean space $\Rsp^d$ and endow the space of probability measures $\Prob_p(\Rsp^d)$ with finite $p$-moment with the $p$-Wasserstein distance, denoted $\Wass_p$. The use of the metric space $(\Prob_p(\Rsp^d),\Wass_p)$ in practical applications, especially in data science, comes with two difficulties. First, in dimension $d>1$, Wasserstein spaces are not flat, which forbids a direct use of linear statistical methods such as principal component analysis. Second, despite the tremendous progress in optimal transport solvers in recent years, computing an optimal transport plan is still costly. These two issues could be alleviated if one could construct embeddings $\Phi$ of $\Prob_p(\Rsp^d)$ into simpler spaces such as $\LL^q$ spaces ($1\leq q<+\infty$), or more generally into Banach spaces of non-trivial type (see \cite[Chapter 9]{ledoux1991probability} for types of Banach spaces). One would like these embeddings to preserve as well as possible the geometric structure of the Wasserstein space; ideally they should be bi-Lipschitz or bi-Hölder.

In dimension $d=1$, this problem admits a positive and ideal solution: the Wasserstein space $(\Prob_p(\Rsp), \Wass_p)$ can be embedded \emph{isometrically} into a subset of $\LL^p([0,1])$. To do so, one associates to any measure $\mu\in\Prob(\Rsp)$ its quantile function $Q_{\mu}:(0,1)\to\Rsp$, which is the generalized inverse of the cumulative distribution function $F_\mu$: 
\begin{equation}
    Q_\mu(m) = \inf\{t\in\Rsp \mid F_\mu(t)\geq m\}. 
\end{equation}
These quantile functions satisfy, for all $\mu,\nu$ in $\Prob_p(\Rsp)$, $\nr{Q_\mu - Q_{\nu}}_{\LL^p([0,1])} = \Wass_p(\mu,\nu)$, thus providing the isometric embedding. 

In higher dimension, however, constructing such an isometric embedding becomes impossible because of the (positive) curvature of the Wasserstein spaces.  An important body of work quantifies the hardness of approximating Wasserstein distances using embeddings, see for instance \cite[Section 8.3.1]{peyrecuturi} for a short review. Most work have focused on $W_1$ \cite{naor2007planar,gartland2026lower}, although $W_2$ is often a more suitable choice in many application areas, e.g. in computer vision, graphics, and machine learning (see \cite{courty2018learning} and references therein). One reason for the lack of results for $W_2$ (and for $W_p$, $p>1$) is a general result by Andoni, Naor and Neiman \cite{ann}, who showed the impossibility to embed Wasserstein spaces (for $p>1$) into Banach spaces of non-trivial type, even in a very weak sense. Their statement rules out all coarse embeddings (see \cite[Section 1.4.2]{ann} for a definition), in particular all uniform embeddings. Recall that an embedding $\Phi$ is said to be uniform, or bi-uniformly continuous, if $\Phi$ is injective, and $\Phi$ and its inverse are uniformly continuous. For instance, any bi-Hölder embedding is uniform, hence coarse.

\begin{theorem}[Andoni, Naor, Neiman \cite{ann}]\label{t:ann} Let $p>1$ and $d\geq 3$. Then, there is no coarse embedding of $(\Prob_p(\R^d),\Wass_p)$ into any Banach space of non-trivial type\footnote{Specifying that the target Banach spaces have non-trivial type is necessary, since every metric space $(X,d)$, in particular Wasserstein spaces, admits a tautological isometric embedding, called Kuratowski embedding, into the Banach space of bounded continuous functions on $X$ equipped with the sup-norm, which has trivial type as soon as $X$ is infinite. The Kuratowski embedding works for general metric spaces, but does not usually simplify computations or geometry.}. In particular, $(\Prob_p(\Rsp^d), \Wass_p)$ does not admit any bi-Hölder embedding into any $\LL^q$ space for $q\in(1,+\infty)$.
\end{theorem}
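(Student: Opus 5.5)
The strategy is the one of Andoni, Naor and Neiman: find inside $(\Prob_p(\R^d),\Wass_p)$, for $d\geq 3$, a family of finite metric spaces that is known to admit no coarse embedding into any Banach space of non-trivial type, with distortion controlled uniformly. Non-coarse-embeddability is preserved under isometric (or uniformly bi-Lipschitz, up to rescaling) inclusion of subsets. So it suffices to embed such a family into $\Prob_p(\R^d)$ with uniformly bounded distortion.

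The candidate family is a sequence of expander graphs, or more conveniently the snowflakes/powers of the Hamming cube or of $\ell_1$-type spaces. For Banach spaces of non-trivial type, the relevant obstruction is a Poincaré or Enflo-type inequality. By the Maurey–Pisier theorem, non-trivial type is equivalent to not containing $\ell_1^n$'s uniformly. Together with Mendel–Naor metric cotype, this shows that Banach spaces of non-trivial type satisfy a nonlinear type/cotype inequality. That inequality rules out coarse embeddings of $\ell_1^n$ grids, and more generally of spaces containing arbitrarily large "$\ell_1$-like" metric configurations at all scales.

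The key step is therefore a construction. I would aim to show that for $d\geq 3$ and $p>1$, $(\Prob_p(\R^d),\Wass_p)$ contains, at every scale and with uniformly bounded distortion, either snowflakes $(X,\rho^{1/p})$ of $\ell_1$-grids or expander-based transportation cost spaces. The natural construction takes measures uniformly distributed on $n$ points placed on a well-separated configuration in $\R^3$, for instance a thickened copy of an expander graph. The $\Wass_p$ distance between such empirical measures is then comparable to a $p$-power transportation cost on the graph. Following Khot–Naor and Naor–Schechtman, planar or $3$-dimensional transportation cost spaces over expanders (or over grids) then inherit non-embeddability into spaces of non-trivial type. The value $d\geq 3$ is what allows bounded-degree expanders to be realized with comparable metrics, since $\R^3$ contains bi-Lipschitz copies of graph metrics up to scaling. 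The $p$-th root snowflaking then transfers the $\ell_1$ obstruction to $\Wass_p$.

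The final step checks that coarse non-embeddability survives all the rescalings. Because the finite configurations can be placed at arbitrarily large scale with uniformly controlled distortion, any coarse embedding $\Phi$ would yield embeddings of the whole family with uniformly bounded compression and expansion moduli. This contradicts the quantitative Poincaré inequality valid in the target. The bi-Hölder claim follows because bi-Hölder maps are uniform, hence coarse on this unbounded space after the standard scaling argument, and because $\LL^q$ has non-trivial type for $1<q<\infty$.

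The main obstacle is the middle step: proving the lower bound on $\Wass_p$ between the configuration measures, that is, that transport on the embedded graph cannot shortcut through $\R^3$. It also requires establishing that the resulting transportation-cost space really fails the nonlinear type inequality. I would first try to reduce this to the known result that $W_1$ over expanders fails to embed into $L_1$ or type-$>1$ spaces, and then handle general $p$ via Hölder comparisons between $\Wass_1$ and $\Wass_p$ on uniformly bounded-support measures.
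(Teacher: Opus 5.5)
\textbf{There are genuine gaps; the missing piece is the core construction of the theorem.}

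The paper does not prove Theorem~\ref{t:ann}; it quotes it from \cite{ann}. The route there has three steps. (i) \emph{Snowflake universality}: for every finite metric space $(X,d_X)$ and every $\varepsilon>0$, $(X,d_X^{1/p})$ embeds into $(\Prob_p(\Rsp^3),\Wass_p)$ with distortion $1+\varepsilon$. (ii) Coarse embeddability is insensitive to snowflaking, since the identity $(X,d_X)\to(X,d_X^{1/p})$ has both moduli equal to $t\mapsto t^{1/p}$. (iii) Finite subsets of $\ell_\infty$ (or suitable expanders) admit no coarse embeddings with uniform moduli into a Banach space of nontrivial type (Mendel--Naor's metric cotype). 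Your outline has this shape, but each substantive step is wrong or missing.

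\emph{The obstruction family is wrong.} Snowflaked $\ell_1^n$ grids and Hamming cubes are not obstructions. $(\ell_1,\|x-y\|_1^{1/2})$ embeds isometrically into $\LL^2$. Even more directly, take $a_i>0$ with $\sum_i a_i=1$ and $0\le x_i\le a_i$, and set $\mu_x=\sum_{i=1}^n\big[(a_i-x_i)\delta_{2i}+x_i\delta_{2i+1}\big]$. Then $\Wass_p(\mu_x,\mu_y)=\|x-y\|_1^{1/p}$. So these snowflakes already sit isometrically in $\Prob_p(\Rsp)$, which embeds isometrically into $\LL^p([0,1])$, a space of nontrivial type. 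The Maurey--Pisier $\ell_1^n$ criterion is a linear property of the target. The nonlinear obstruction on the source side is lack of finite metric cotype (uniform copies of $\ell_\infty^n$ grids) or expansion. Your claim that nonlinear type/cotype inequalities rule out coarse embeddings of $\ell_1^n$ grids is false.

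\emph{The construction is missing.} $\Rsp^3$ is doubling, so it contains no uniformly bi-Lipschitz copies of expanders or $\ell_\infty^n$ grids; it only contains crossing-free \emph{topological} realizations of graphs. For empirical measures on such a realization, $\Wass_p$ is governed by Euclidean distances. So the obstacle you defer (transport shortcutting through $\Rsp^3$) is the whole content of the theorem, and your plan has no idea to overcome it.

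What is needed is a mechanism, specific to $p>1$, that forces $\Wass_p^p$ to reproduce an intrinsic graph distance. For instance, spread a heavy background of linear density $\lambda$ along disjoint curves realizing the edges of the complete graph weighted by $d_X$; this is where $d\geq 3$ is used. Then add a small mass $m$ at a vertex. Sliding the background by $m/\lambda$ along a path of length $\ell$ transfers the excess from $a$ to $b$ at cost about $\ell\,m^p\lambda^{1-p}$ in $\Wass_p^p$. Moving it directly costs $m|a-b|^p$. The ratio $(m/\lambda)^{p-1}\ell/|a-b|^p$ tends to $0$ as $m/\lambda\to 0$. This is the kind of effect that lets $\Wass_p^p$ reproduce $d_X$, i.e.\ produce a $1/p$-snowflake, once shortcuts between curves are shown not to be competitive.

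\emph{The fallback through $\Wass_1$ cannot work.}
\begin{itemize}
\item For $p=1$ the ratio above is $\ell/|a-b|\geq 1$, so shortcuts always win.
\item The Khot--Naor and Naor--Schechtman results are lower bounds on bi-Lipschitz distortion into $\LL^1$. They neither exclude coarse embeddings nor cover all targets of nontrivial type.
\item The comparison $\Wass_1\le\Wass_p\le D^{1-1/p}\Wass_1^{1/p}$, for supports of diameter $D$, has scale-dependent constants. It cannot give the uniform moduli over all scales that a coarse non-embedding argument requires.
\end{itemize}
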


In contrast to the negative result given by Theorem \ref{t:ann}, our goal in this paper is to show that classical and natural embeddings of Wasserstein spaces, considered in applications, are bi-Hölder continuous on subsets of Wasserstein spaces (that we seek as large as possible) into Banach spaces of non-trivial type. In other words, Theorem \ref{t:ann} is an obstruction to embedding \emph{full} Wasserstein spaces, but we show that it is nevertheless possible to embed large subsets of Wasserstein spaces in a controlled way.

Three embeddings are  especially relevant for applications: the \emph{linearized optimal transport embedding} \cite{wangslepcev}, the \emph{sliced-Wasserstein embedding} \cite{rabinpeyredelonbernot} and the \emph{kernel-mean embedding} \cite{gretton1}, respectively abbreviated by LOT, SW and KME (see \cite{merigot2026embedding} for a review of LOT and SW embeddings).
For suitable parameters, these three constructions define maps from probability measures into Banach spaces: 
\[
\begin{aligned}
\Phi^{\LOT}_{p,\rho}&:
\Prob_p(\Rsp^d)\longrightarrow \LL^p(\rho,\Rsp^d),\\
\Phi^{\SW}_p&:
\Prob_p(\Rsp^d)\longrightarrow
\LL^p(\Sph^{d-1}\times[0,1]),\\
\Phi^{\KME}_\beta&:
\Prob_1(\Rsp^d)\longrightarrow \mathcal H_\beta,
\qquad \beta\in(0,1).
\end{aligned}
\]
For the LOT embedding, we fix a reference measure $\rho$ belonging to the set $\Prob_{\mathrm{ac}}(\Rsp^d)$ of absolutely continuous measures on $\R^d$, and we assume $\rho$ is compactly supported. The target space is $\LL^p(\rho,\Rsp^d)$, the space of $\R^d$-valued function which are $p$-integrable with respect to $\rho$. For the KME embedding, $\mathcal H_\beta$ is the reproducing kernel Hilbert space associated with the kernel 
$$
\kappa_\beta(x,y) =|x|^{2\beta}+|y|^{2\beta}-|x-y|^{2\beta},
$$
for $\beta\in (0,1)$. 

When they are injective, these embeddings can be used to define a distance on the Wasserstein space, by pullback,  that can then be used as an alternative to the Wasserstein distance. It is already known that, when restricted to the set $\Prob(B(0,R))\subseteq \Prob_p(\Rsp^d)$, these embeddings are bi-Hölder on their image. As an illustration, we quote the following result from  Bonnotte \cite{bonnotte} for the sliced-Wasserstein embedding.

\begin{theorem}[Bonnotte] \label{th:bonnotte}
Given $R>0$ and $p\geq1$, the restriction of the embedding $\Phi_{p}^{\SW}$ to $\Prob(B(0,R))$ is bi-Hölder on its image. More precisely, there exists a constant $C = \cst(p,d,R)$ such that for all $\mu,\nu\in\Prob(B(0,R))$,
$$ C \Wass_p(\mu,\nu)^{p(d+1)}\leq \nr{\Phi_p^{\SW}(\mu) - \Phi_p^{\SW}(\nu)}_{\LL^p(\Sph^{d-1}\times[0,1])} \leq \Wass_p(\mu,\nu),$$
\end{theorem}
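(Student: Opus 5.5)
The upper bound comes directly from the one-dimensional isometry; the lower bound goes through a negative Sobolev norm. Write $P_\theta(x)=\langle \theta,x\rangle$ and let $\sigma$ be the normalized uniform measure on $\Sph^{d-1}$. By definition,
$$\Phi_p^{\SW}(\mu)(\theta,m)=Q_{\pushforward{(P_\theta)}{\mu}}(m),$$
so
$$\nr{\Phi_p^{\SW}(\mu)-\Phi_p^{\SW}(\nu)}_{\LL^p}^p=\int_{\Sph^{d-1}}\Wass_p^p\bigl(\pushforward{(P_\theta)}{\mu},\pushforward{(P_\theta)}{\nu}\bigr)\,\dd\sigma(\theta).$$
This is the sliced Wasserstein distance $\SW_p(\mu,\nu)^p$.

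For the \textbf{upper bound}, take an optimal plan $\gamma$ for $\Wass_p(\mu,\nu)$. Its image under $P_\theta\times P_\theta$ is a coupling of the two projections, and $P_\theta$ is $1$-Lipschitz. Hence
$$\Wass_p^p\bigl(\pushforward{(P_\theta)}{\mu},\pushforward{(P_\theta)}{\nu}\bigr)\le\int|x-y|^p\,\dd\gamma=\Wass_p^p(\mu,\nu).$$
Averaging over $\theta$ gives $\SW_p\le \Wass_p$.

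For the \textbf{lower bound}, everything lives in $B(0,R)$, so all distances are bounded by $2R$. Moment comparisons then give
$$\Wass_p^p\le (2R)^{p-1}\Wass_1 \quad\text{and}\quad \SW_1\le \SW_p .$$
It therefore suffices to prove $\Wass_1(\mu,\nu)\lesssim \SW_1(\mu,\nu)^{\alpha}$ for a suitable $\alpha$, and then to track exponents. I would do this by Fourier analysis in three steps.

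\emph{Step 1 (smoothing).} Mollify at scale $\varepsilon$ with $\eta_\varepsilon$. We have $\Wass_1(\mu,\mu*\eta_\varepsilon)\le\varepsilon$. By Kantorovich duality,
$$\Wass_1(\mu,\nu)\le 2\varepsilon+\sup_{\Lip f\le1}\int (f*\eta_\varepsilon)\,\dd(\mu-\nu),$$
where $f$ may be taken supported in $B(0,R+1)$.

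\emph{Step 2 (Fourier slice theorem).} The Fourier slice theorem gives $\widehat{\mu}(r\theta)=\widehat{\pushforward{(P_\theta)}{\mu}}(r)$. In polar coordinates, Parseval bounds the pairing by
$$\int_{\Sph^{d-1}}\int_{\Rsp}|\hat f(r\theta)|\,|\hat\eta(\varepsilon r)|\,|r|^{d-1}\,\bigl|\widehat{\mu_\theta-\nu_\theta}(r)\bigr|\,\dd r\,\dd\sigma(\theta).$$
In one dimension, for $r\ne0$ and with $F_{\mu_\theta}$ the CDF of $\pushforward{(P_\theta)}{\mu}$,
$$\bigl|\widehat{\mu_\theta-\nu_\theta}(r)\bigr|\le |r|\,\nr{F_{\mu_\theta}-F_{\nu_\theta}}_{\LL^1}=|r|\,\Wass_1(\mu_\theta,\nu_\theta).$$
For $\hat f$ I use only the crude bound $|\hat f(\xi)|\lesssim_R (1+|\xi|)^{-1}$; more precisely, $|\xi|\,|\hat f|$ is controlled via $\nabla f$ and compact support. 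The factor $\hat\eta(\varepsilon r)$ decays fast beyond $r\sim 1/\varepsilon$. The integral is then bounded by $C\varepsilon^{-(d+1)}\SW_1(\mu,\nu)$, up to logarithms.

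\emph{Step 3 (optimization).} Minimizing $\varepsilon+\varepsilon^{-(d+1)}\SW_1$ over $\varepsilon$ gives
$$\Wass_1\lesssim \SW_1^{1/(d+2)}\le \SW_p^{1/(d+2)} .$$
Combining with $\Wass_p^p\le (2R)^{p-1}\Wass_1$ yields $\Wass_p^{p(d+2)}\lesssim \SW_p$. This is of the stated form; getting exactly the exponent $p(d+1)$ requires a slightly sharper count.

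The \textbf{main obstacle} is Step 2: the decay of $\hat f$ for a merely Lipschitz $f$ must be handled so that the exponent of $\varepsilon$ comes out as $-d$ rather than $-(d+1)$. I would first try to avoid pointwise bounds on $\hat f$ altogether. The idea is to write $\mu-\nu$ through the inverse Radon transform, $f*\eta_\varepsilon = c\,R^*\Lambda^{d-1}R(f*\eta_\varepsilon)$. Each slice pairing is then bounded by $\Wass_1(\mu_\theta,\nu_\theta)$ times the Lipschitz constant of the one-dimensional function $\Lambda^{d-1}R(f*\eta_\varepsilon)(\theta,\cdot)$. That constant is $O(\varepsilon^{-(d-1)})$ up to logs. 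This gives $\Wass_1\lesssim\SW_1^{1/d}$, which is even better than needed and implies the stated bound.
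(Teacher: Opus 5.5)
\textbf{Steps 1--3.} These contain a counting slip, and you then misdiagnose it as the ``main obstacle''. You write down the right decay, $|\xi|\,|\hat f(\xi)|\lesssim_R 1$. Indeed $|\xi_j\hat f(\xi)|=|\widehat{\partial_j f}(\xi)|\le\nr{\partial_j f}_{L^1}\lesssim R^d$, once $f$ is normalized by $f(0)=0$ and cut off as in the definition of $\phi_L$ in the proof of Theorem~\ref{t:comparW1sW1}. But you do not use this bound. Combined with $|\widehat{\mu_\theta-\nu_\theta}(r)|\le|r|\,\Wass_1(\mu_\theta,\nu_\theta)$, the radial integral is
\[
\int_{\Rsp}|r|^{d}\,|\hat f(r\theta)|\,|\hat\eta(\e r)|\,\dd r\;\lesssim_R\;\int_{\Rsp}|r|^{d-1}|\hat\eta(\e r)|\,\dd r\;=\;C\e^{-d},
\]
not $C\e^{-(d+1)}$; the latter is what the cruder bound $|\hat f|\lesssim 1$ would give. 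So $\Wass_1\le 2\e+C_R\e^{-d}\SlicedWass_1$, hence $\Wass_1\lesssim\SlicedWass_1^{1/(d+1)}$. Together with $\Wass_p^p\le(2R)^{p-1}\Wass_1$ and $\SlicedWass_1\le\SlicedWass_p$, this gives exactly $\Wass_p^{p(d+1)}\lesssim\SlicedWass_p$. The corrected Fourier argument is therefore a complete proof of the statement (essentially Bonnotte's own argument).

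As written, however, your Steps 1--3 stop at $\Wass_p^{p(d+2)}\lesssim\SlicedWass_p$. That gives bi-H\"older continuity, but it is strictly weaker than the stated inequality: since $\Wass_p\le 2R$, the $d+1$ bound implies the $d+2$ bound, not conversely. So it is not ``of the stated form''.

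\textbf{The Radon-inversion fallback.} This route is also correct. The paper only cites this theorem, but this is the route it uses in Section~3 for its generalization.

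For odd $d$, the Lipschitz bound you merely assert is exactly Lemma~\ref{lem:carlierfigallimerigotwang-test}. One moves a single derivative onto $f$ and uses that $\{y\in B_A:|\sca{y}{\theta}-t|\le\lambda\}$ has volume $\lesssim A^{d-1}\lambda$. This gives $\Lip(\psi_\theta)\lesssim A^{d-1}\lambda^{-(d-1)}\Lip(f)$. For $\mu,\nu\in\Prob(B(0,R))$, the truncation in the proof of Theorem~\ref{t:comparW1sW1} with $L=R$ costs nothing, since the tail term vanishes. Hence $\Wass_1\lesssim R^{1-1/d}\SlicedWass_1^{1/d}$, and then $\Wass_1^{d+1}\le 2R\,\Wass_1^{d}\lesssim\SlicedWass_1$.

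For even $d$, the paper avoids the nonlocal operator $\Lambda^{d-1}$ by lifting to $\Rsp^{d+1}$, where $\SlicedWass_1$ changes only by a dimensional factor; this gives exponent $1/(d+1)$. Your ``up to logs'' claim is true, but it needs an actual estimate on the Hilbert transform $H$. For instance, $\nr{Hh}_\infty\lesssim\e\nr{h'}_\infty+\nr{h}_\infty\log(2+A/\e)$ for $h$ supported in an interval of length $\sim A$. The resulting logarithm is harmless for the target $\Wass_1^{d+1}\lesssim\SlicedWass_1$.

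\textbf{Trade-off.} The Fourier route gives the stated exponent in every dimension from one elementary decay bound. The Radon route uses locality of $\partial_t^{d-1}$ in odd dimensions to gain a full power ($\Wass_p^{pd}\lesssim\SlicedWass_p$ there), at the price of a parity split.
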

The contrast between the obstruction of Theorem \ref{t:ann} on $\Prob_p(\Rsp^d)$ and these positive results on $\Prob(B(0,R))$ for the three embeddings defined above naturally leads to the question of understanding how these embeddings behave on intermediate classes of measures, in which non-compactly supported measures are allowed but the behaviour of tails is controlled. We will be especially interested in the following sets of probability measures whose moment of order $q\geq p$ is bounded:
$$ \Mom_{q,R} = \{ \mu \in\Prob(\Rsp^d) \mid M_q(\mu) \leq R^q\}, \hbox{ where } M_q(\mu) = \int \abs{x}^q\dd\mu(x). $$
We note that these classes interpolate between $\Prob(B(0,R))$ and $\Prob_p(\Rsp^d)$. More precisely
$$ \Mom_{+\infty,R} := \bigcap_{p>0} \Mom_{p,R} = \Prob(B(0,R)),\hbox{ while } \Mom_{p,+\infty} := \bigcup_{R>0} \Mom_{p,R} = \Prob_p(\Rsp^d). $$ 

Our main result identifies a sharp threshold in the order $q$ of the moments that must be controlled for these embeddings to faithfully reflect the Wasserstein geometry. This common threshold for three rather different embeddings is the main phenomenon highlighted by our results.

\begin{theorem}[Informal]
Let $p>1$, $\beta\in(0,1)$ and $R>0$. The restrictions of the embeddings $\Phi^\LOT_{p,\rho},\Phi^\KME_\beta$ and $\Phi^\SW_p$ to the set $\Mom_{q,R}$, viewed as a subset of $(\Prob_p(\Rsp^d),\Wass_p)$, are bi-Hölder onto their images whenever $q>p$. In contrast, at the critical exponent $q=p$, the embeddings are not even bi-uniformly continuous on $\Mom_{p,R}$ --- for $\Phi^\SW_p$, this is established under the additional assumption that $p\in\mathbb{N}$.
\end{theorem}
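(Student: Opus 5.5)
The plan is a truncation argument: reduce the bi-Hölder bounds on $\Mom_{q,R}$ to the compactly supported case, where Theorem~\ref{th:bonnotte} and its analogues for LOT and KME apply, and then optimize the truncation radius. The upper (Hölder) bounds should be direct. For SW one has $\nr{\Phi^\SW_p(\mu)-\Phi^\SW_p(\nu)}\le \Wass_p(\mu,\nu)$ as in Theorem~\ref{th:bonnotte}, since projections are $1$-Lipschitz. For KME, $\nr{\Phi^\KME_\beta(\mu)-\Phi^\KME_\beta(\nu)}^2_{\mathcal H_\beta}$ is the energy distance $-\iint|x-y|^{2\beta}\dd(\mu-\nu)^{\otimes 2}$. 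Expanding it along an optimal coupling and using $||a|^{2\beta}-|b|^{2\beta}|\le |a-b|^{2\beta}$ for $2\beta\le 1$ (or $\Wass_1$ Lipschitz bounds with moments otherwise) gives a Hölder bound in $\Wass_p$. For LOT, the Hölder continuity of $\mu\mapsto T_\rho^\mu$ is known for compactly supported targets, with exponent depending on $d,p$. I would extend it to $\Mom_{q,R}$ by truncation: on $\Mom_{q,R}$ the mass outside $B(0,L)$ is at most $R^qL^{-q}$, and the $p$-th moment outside is at most $R^qL^{p-q}$. This is where $q>p$ enters.

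For the lower (inverse Hölder) bounds, given $\mu,\nu\in\Mom_{q,R}$ I would build compactly supported $\mu_L,\nu_L$, for instance the pushforward by the radial retraction onto $B(0,L)$. Then $\Wass_p(\mu,\mu_L)^p\le \int_{|x|>L}|x|^p\dd\mu\le R^qL^{p-q}$, and since the retraction is $1$-Lipschitz the embedded distances also change by at most a Hölder power of this quantity. Applying the compact result with explicit dependence on $L$ (for SW, Bonnotte's constant scales polynomially in $L$ by homogeneity, roughly $C\sim L^{-a}$) gives
\[
\Wass_p(\mu,\nu)\lesssim L^{a}\,\nr{\Phi(\mu_L)-\Phi(\nu_L)}^{\theta} + (R^qL^{p-q})^{1/p}.
\]
Choosing $L$ as a negative power of $\nr{\Phi(\mu)-\Phi(\nu)}$ balances the two terms and yields a positive Hölder exponent exactly because $q>p$. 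For LOT the lower bound is easier, because $\Wass_p(\mu,\nu)\le\nr{T^\mu_\rho-T^\nu_\rho}_{\LL^p(\rho)}$ always holds. For KME I would use a Fourier representation of the energy distance, $\int|\hat\mu-\hat\nu|^2|\xi|^{-d-2\beta}\dd\xi$, to control a smoothed $\Wass_1$ on balls, and then pass to $\Wass_p$ by interpolation using the $q$-moment.

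For the sharpness at $q=p$, I would construct sequences in $\Mom_{p,R}$ that violate uniform continuity of the inverse. The model is $\mu_\e=(1-\e)\delta_0+\e\,\delta_{x_\e}$ with $|x_\e|=R\e^{-1/p}$, compared with a rotated version $\nu_\e$ in which $x_\e$ is replaced by $y_\e$, $|y_\e|=|x_\e|$. Their $\Wass_p$ distance stays of order $R$. For SW the sliced distance between them tends to $0$ when the angle is chosen suitably and $d\ge2$: the projected masses are tiny, and only an $\e$-fraction of directions sees large separation. Integer $p$ presumably allows an exact moment computation here. For KME, the energy distance is bounded by $\e|x_\e-y_\e|^{2\beta}\to0$ since $2\beta<p$. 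For LOT, I would instead violate forward uniform continuity by spreading mass, exploiting the instability of transport maps when far-away mass is split.

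The main obstacle I expect is obtaining the explicit $L$-dependence of the compact-case lower constants, especially for KME and LOT, together with the delicate sharpness construction for SW.
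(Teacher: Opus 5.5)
Your retraction scheme for the reverse Hölder bounds does not close when $q$ is close to $p$. Your inequality contains $\nr{\Phi(\mu_L)-\Phi(\nu_L)}$, and the only way back to $\nr{\Phi(\mu)-\Phi(\nu)}$ that you indicate is the triangle inequality. There is no general monotonicity of $\SlicedWass_p$ or of the MMD under $1$-Lipschitz pushforwards that would let you avoid it. The truncation error $(R^qL^{p-q})^{1/p}$ then sits inside the Hölder power and is multiplied by the compact constant, which by scaling is $L^{1-\theta}$ (with $\theta=\frac{1}{p(d+1)}$ from Theorem~\ref{th:bonnotte}). This produces a term of order $R^{q\theta/p}L^{1-q\theta/p}$, which tends to $0$ as $L\to\infty$ only if $q>p/\theta=p^2(d+1)$. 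So the balancing does not work ``exactly because $q>p$''. The paper truncates the test function instead of the measures. It writes $\Wass_1(\mu,\nu)=\int\phi\,\dd(\mu-\nu)$; the cut-off potential $\phi_L$ costs only the additive tail $8R^qL^{1-q}$. Then $\int\phi_L\,\dd(\mu-\nu)$ is bounded, for the original measures and $d$ odd, by $c_dL^{1-1/d}\SlicedWass_1(\mu,\nu)^{1/d}$ via Radon inversion (Lemma~\ref{lem:carlierfigallimerigotwang-test}). This closes for every $q>1$, and Lemma~\ref{l:wassboundedmoments} together with $\SlicedWass_1\le\SlicedWass_p$ passes to $\Wass_p$. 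Your Fourier idea for KME is viable only if run the same way, on truncated test functions; the paper simply invokes \cite[Proposition 17]{modeste}.

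For LOT, the upper bound is the hard direction, and ``known for compact targets, extend by truncation'' misses the mechanism. Optimal maps do not commute with truncating the target: in general $T_{\rho\to\pi_{L\#}\mu,p}\neq\pi_L\circ T_{\rho\to\mu,p}$. So $\nr{T_{\rho\to\mu,p}-T_{\rho\to\mu_L,p}}_{\LL^p(\rho)}$ is itself an instance of the non-compact stability problem and is not controlled by tail bounds. Restricting the source instead (to $\{|T_\mu|\le S,\ |T_\nu|\le S\}$ or to $\Omega_\eta$) preserves optimality. However, a compact stability estimate phrased via $\Wass_1$ of the restricted images is then useless, since that distance is not controlled by $\Wass_1(\mu,\nu)$. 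The paper instead uses stability estimates whose right-hand side is the dual gap $\int(\psi_\mu-\psi_\nu)\,\dd(\mu-\nu)$ built from the target-side potentials (a Bregman--Kantorovich divergence). This gap is an integral over the source of a pointwise nonnegative quantity, so it can only decrease when the source is restricted (see \eqref{e:bregmankantofenchel} and Corollary~\ref{c:localizedMT}). The full gap is then bounded on $\Mom_{q,R}$ by $C\Wass_1(\mu,\nu)$ when $p=2$, since the potentials are globally Lipschitz because $\rho$ has compact support, and by Lemma~\ref{l:boundW1-qmoment} for general $p$. For $p\neq 2$ you also need the interior $\LL^\infty$ bound of Lemma~\ref{l:boundedtransporteta}, so as to work on the convex set $\Omega_\eta$ and pay $\eta^{1-p/q}$ on the boundary layer; this is where $q>p$ enters.

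At $q=p$, your sliced-Wasserstein counterexample cannot work. Take $\mu_\e=(1-\e)\delta_0+\e\delta_{x_\e}$, $\nu_\e=(1-\e)\delta_0+\e\delta_{y_\e}$ with $\e<1/2$. The quantile functions give $\Wass_p^p(\mu_{\e,\theta},\nu_{\e,\theta})\ge 2^{1-p}\e|\sca{x_\e-y_\e}{\theta}|^p$ for every $\theta$. Hence $\SlicedWass_p^p(\mu_\e,\nu_\e)\ge c_{d,p}\,\e|x_\e-y_\e|^p\ge c_{d,p}\Wass_p^p(\mu_\e,\nu_\e)$. A positive fraction of directions, not an $\e$-fraction, sees a separation of order $|x_\e|$, and $\e|x_\e|^p=R^p$ exactly compensates the small mass.

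The paper uses a different mechanism. Its measures $\mu_R,\nu_R$ share the heavy-tailed radial law $\rho_R$ (density $\frac{1}{\log R}r^{-p-1}$ on $[1,R]$, plus an atom at $0$). They have distinct angular laws on $\Sph^{d-1}$ with the same one-sided cosine transform $C_p^+$. Every projection is then a mixture of dilations of $\rho_R$ whose mixing laws have equal one-sided $p$-th moments, which forces $\Wass_p^p\lesssim 1/\log R$ uniformly in the direction. Meanwhile, $p$-homogeneous test functions keep $\Wass_p(\mu_R,\nu_R)$ bounded below. Integrality of $p$ is what allows the construction of the angular laws: the uniform measure $\sigma$ and $(1+\lambda h)\sigma$, with $h$ orthogonal to polynomials of degree $\le p$ and of the right parity. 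This works because $t_+^p=\frac12(t^p+s(t))$, where $t^p$ is a polynomial and $s$ has the opposite parity. It is not about an exact moment computation.

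Smaller points. For KME, $\nr{K_{\mu_\e}-K_{\nu_\e}}^2_{\mathcal H_{\kappa_\beta}}=2\e^2|x_\e-y_\e|^{2\beta}\lesssim\e^{2-2\beta/p}\to0$ for all $\beta<1<p$. Your bound $\e|x_\e-y_\e|^{2\beta}$ loses a factor $\e$ and needs $2\beta<p$, which fails e.g.\ for $\beta=0.9$, $p=1.5$. The identity $\nr{\kappa_\beta(x,\cdot)-\kappa_\beta(y,\cdot)}^2_{\mathcal H_{\kappa_\beta}}=2|x-y|^{2\beta}$ also yields the KME upper bound $\sqrt2\,\Wass_1^\beta$ for every $\beta\in(0,1)$, which settles your vague case $2\beta>1$. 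Finally, your rotated far-point model is the right one for LOT, with the angle tending to $0$. The paper puts masses $\frac{1}{2n^p}$ at $\pm(n,\e_n n)$ with $\e_n=4n^{-p/3}$. Then $\Wass_p=\e_n\to0$, while the thin caps of the disk sent to the far points become disjoint and the maps stay at $\LL^p(\rho)$-distance of order one.
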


In the remainder of this introduction, we describe the three embeddings and state the precise results obtained for each of them.

\subsection{Embedding via linearized optimal transport}
Throughout this section, we fix an absolutely continuous probability measure $\rho \in \Prob_{\mathrm{ac}}(\Rsp^d)$, which we assume  for simplicity to be supported on a compact and convex set. For any $p>1$ and any measure $\mu\in\Prob_p(\Rsp^d)$, the theorem of Gangbo-McCann \cite{gangbomccann} ensures that the optimal transport between $\rho$ and $\mu$ for the cost $c_p(x,y) = \abs{x-y}^p$ is unique and  induced by a map. We denote this map by $T_{\rho\to\mu,p}$ and we consider it as a vector field in $\LL^p(\rho,\Rsp^d)$. The idea behind the \emph{linearized optimal transport} embedding \cite{wangslepcev}, initially proposed in the case $p=2$, is to represent any measure $\mu \in\Prob_p(\Rsp^d)$ by its $p$-optimal transport map from the fixed measure $\rho$. The resulting map 
$$\Phi^\LOT_{p,\rho}: \Prob_p(\Rsp^d) \rightarrow \LL^p(\rho,\R^d), \qquad \mu \mapsto T_{\rho\to\mu,p}$$
is injective. Moreover, a coupling argument even shows that it does not decrease  distances: 
$$ \forall \mu,\nu\in\Prob_p(\Rsp^d),\qquad \Wass_p(\mu,\nu) \leq \nr{\Phi^\LOT_{p,\rho}(\mu) - \Phi^\LOT_{p,\rho}(\nu)}_{\LL^p(\rho,\Rsp^d)}.$$
Proving that the LOT embedding is bi-Hölder over a certain class $\Mom_{q,R}$ of probability measures therefore amounts to establishing a quantitative stability estimate for optimal transport maps on this class. For measures supported over a fixed ball of radius $R$, corresponding to limit class $\Mom_{+\infty,R}$, such estimates are already known. In the quadratic case $p=2$, we refer to Berman \cite{berman}, Delalande-Mérigot \cite{delalandemerigot}, Letrouit-Mérigot \cite{letrouitmerigot} and Mérigot \cite{merigot},  while  for the cost $c_p$ with exponent $p>1$, stability is established by Mischler-Trevisan \cite{mischlertrevisan}. 
In the quadratic case, the stability estimates for $\Mom_{+\infty,R}$ have been extended to the class $\Mom_{q,R}$ in \cite{delalandemerigot} but only for sufficiently large exponents $q$, satisfying both $q>d$ and $q\geq 4$. Moreover, the Hölder exponent obtained in this case depends on the dimension and deteriorates when $q$ tends to $d$. 

We establish here that the restriction of the embedding $\Phi^\LOT_{2,\rho}$ to the class $\Mom_{q,R}$ is bi-Hölder as soon as $q>2$:

\begin{theorem} \label{t:p>2}
Let $\rho \in\Prob_{\mathrm{ac}}(\Rsp^d)$ and assume that its density is bounded from above and below by positive constants on its support, which is compact and convex. Let $q>2$ and $R\geq 0$. Then for some constant $C=\cst(q,R)$,
\begin{equation}\label{e:2q-1}
\forall \mu,\nu \in \Mom_{q,R}, \qquad W_2(\mu,\nu)\leq \|T_{\rho\to\mu,2}-T_{\rho\to\nu,2}\|_{L^2(\rho)}\leq C \Wass_1(\mu,\nu)^{\frac14-\frac{3}{4(2q-1)}}
\end{equation}
Consequently, $\Phi_{2,\rho}^\LOT$ is a bi-Hölder embedding of $\Mom_{q,R}\subseteq \Prob_2(\Rsp^d)$ into the Hilbert space $\LL^2(\rho,\Rsp^d)$. 
Additionally, the exponent in the right-hand side of \eqref{e:2q-1} is optimal.
\end{theorem}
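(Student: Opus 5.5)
The plan is to combine a known stability estimate for measures with bounded support with a truncation argument, using the $q$-moment bound to control tails. The lower bound $\Wass_2(\mu,\nu)\le \|T_{\rho\to\mu,2}-T_{\rho\to\nu,2}\|_{L^2(\rho)}$ is the coupling argument recalled above: $(T_{\rho\to\mu,2},T_{\rho\to\nu,2})_\#\rho$ is a coupling of $\mu$ and $\nu$. So everything is in the upper bound. Write $T_\mu=\nabla\phi_\mu$ with $\phi_\mu$ the Brenier potential, convex on the support $X$ of $\rho$. Let $\psi_\mu=\phi_\mu^*$ be its Legendre transform, normalized, e.g. $\int\phi_\mu\,\dd\rho=0$.

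\textbf{Step 1 (potentials control maps).} I will use the inequality from the Delalande--Mérigot/Letrouit--Mérigot line of work, valid because $\rho$ has density bounded above and below on a compact convex set:
\[
\|\nabla\phi_\mu-\nabla\phi_\nu\|_{L^2(\rho)}^2 \le C\,\Big(\int (\psi_\nu-\psi_\mu)\,\dd(\mu-\nu)\Big)^{1/2}.
\]
The right-hand side is the dual gap, obtained by expanding the Kantorovich functional, and $C$ is allowed to depend on $\mathrm{diam}(X)$. I will check that this inequality, usually stated for compactly supported targets, only uses the geometry of $\rho$ and the strong convexity of the dual problem in $\phi$. If instead it needs bounded targets, I would first truncate $\mu,\nu$ by pushing the mass outside $B(0,L)$ to the origin. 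Costing that truncation in $L^2(\rho)$ requires an extra estimate on how much $T_\mu$ changes. This is where I expect trouble.

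\textbf{Step 2 (controlling the dual gap by truncation).} Since $\nabla\phi_\mu(X)\subseteq \mathrm{spt}\,\mu$ and $X$ is bounded, the functions $\psi_\mu,\psi_\nu$ satisfy $|\nabla\psi(y)|\le \mathrm{diam}(X)+|y|$-type bounds, so $|\psi(y)|\lesssim 1+|y|$ up to normalization. Hence $f=\psi_\nu-\psi_\mu$ is $C$-Lipschitz on all of $\Rsp^d$. This would directly give a dual gap $\le C\,\Wass_1$, hence exponent $1/4$. That contradicts the optimality claim, so the Lipschitz bound must in fact degrade. The reason is that the normalization constant of $\psi_\mu$ depends on $\mu$, and $\psi$ need not be globally controlled far from the support.

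My plan is therefore to split $f=f\chi_L+f(1-\chi_L)$ with a cutoff $\chi_L$ at radius $L$. The bulk is estimated by $\|f\chi_L\|_{\Lip}\,\Wass_1 \lesssim L^{a}\,\Wass_1(\mu,\nu)$, where the power $a$ comes from controlling $\psi$ and its normalization on $B(0,L)$ in terms of $L$. The tail is estimated via $|f(y)|\lesssim |y|^{2}$ and Markov: $\int_{|y|>L}|y|^2\,\dd(\mu+\nu)\le 2R^qL^{2-q}$. The target exponent $(q-2)/(2q-1)$ for the dual gap's square root, i.e. $\|T_\mu-T_\nu\|^2\lesssim \Wass_1^{(q-2)/(2q-1)}$, corresponds to balancing $L^{3/2}\Wass_1$ against $L^{2-q}$. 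So $a=3/2$ should be the effective bulk growth, perhaps arising as $L\cdot L^{1/2}$ when Step 1 is combined with an interpolation. Optimizing $L=\Wass_1^{-2/(2q-1)}$ then yields the right-hand side of \eqref{e:2q-1}. Pinning down exactly where the $3/2$ comes from is the main obstacle. I would first try applying Step 1 to the truncated measures and adding the cost of truncation, which is of order $(R^qL^{2-q})^{1/2}$ in $L^2$.

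\textbf{Step 3 (optimality).} Take $\nu=\mu_0$ fixed, e.g. the uniform measure on a ball, and $\mu=(1-\e)\mu_0+\e\delta_{Le}$ with $\e L^q\simeq R^q$. Then $\Wass_1(\mu,\nu)\simeq \e L$. A region of $\rho$-mass $\e$ is sent to distance $L$, and the rearrangement of the remaining mass perturbs $T$ on a larger region, of size governed by a cap or hyperplane displacement. I would compute $\|T_\mu-T_\nu\|_{L^2}$ for this example, tuning $\e$ against $L$ to saturate the exponent $\tfrac14-\tfrac{3}{4(2q-1)}$. The bi-Hölder conclusion then follows since $\Wass_1\le\Wass_2$.
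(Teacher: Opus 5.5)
Your lower bound and the final deduction via $\Wass_1\le\Wass_2$ are fine, but the upper bound and the optimality argument both have genuine gaps.

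\textbf{Steps 1 and 2.} Step~1 cannot hold as you state it. Take $\phi_\mu=+\infty$ off $\spt\rho$. Then the gradients of $\psi_\mu=\phi_\mu^*$ lie in $\spt\rho$, so $\psi_\mu,\psi_\nu$ are globally $D$-Lipschitz with $D=\max_{x\in\spt\rho}|x|$. Hence the dual gap is always at most $2D\,\Wass_1(\mu,\nu)$; the additive normalization is irrelevant because $\mu-\nu$ has zero mass. So an inequality $\|T_\mu-T_\nu\|_{L^2(\rho)}^4\le C\int(\psi_\nu-\psi_\mu)\,\dd(\mu-\nu)$ with $C=C(\rho)$ for unbounded targets would give exponent $\frac14$ on all of $\Prob_2(\R^d)$. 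That contradicts Proposition~\ref{p:turningpointp}.

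You noticed this tension but drew the wrong conclusion in Step~2. The Lipschitz bound on $\psi$ does not degrade, and your bulk factor $L^{3/2}$ in front of $\Wass_1$ in the dual gap has no justification; it is reverse-engineered from the target exponent. What degrades is the constant of the stability estimate itself. For targets in $B(0,S)$, \cite[Theorem 2.2]{merigot} gives $\|T_\mu-T_\nu\|^4\le CS^3B$, with $B$ a Bregman--Kantorovich divergence. The factor $S^3$ is forced by the scaling $T\mapsto\lambda T$, which preserves quadratic optimality and multiplies $\Wass_1$ by $\lambda$. It is exactly where the $\frac32$ comes from.

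\textbf{The missing idea} is to truncate on the source side rather than the target side. As you suspected, pushing far mass to the origin changes $T_\mu$ in an uncontrolled way. Instead, set $E_S=\{x:|T_\mu(x)|\le S,\ |T_\nu(x)|\le S\}$.
\begin{itemize}
\item Off $E_S$ one has $|T_\mu-T_\nu|^2\le 4\sum_{\lambda\in\{\mu,\nu\}}|T_\lambda|^2\mathbf 1_{|T_\lambda|>S}$. Hence $\|T_\mu-T_\nu\|^2_{L^2(\rho|_{E_S^c})}\le 4\int_{|y|>S}|y|^2\,\dd(\mu+\nu)\le 8R^qS^{2-q}$.
\item On $E_S$, the Legendre transforms of the restrictions of $\psi_\mu,\psi_\nu$ to $B(0,S)$ coincide with $\phi_\mu,\phi_\nu$ and have gradients in $B(0,S)$, so the compact estimate applies.
\item The divergence over $\rho|_{E_S}$ is bounded by the one over $\rho$, since its integrand (a Fenchel--Young gap) is nonnegative. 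The symmetrized divergence over $\rho$ is exactly the dual gap, which is at most $2D\,\Wass_1$.
\end{itemize}
This yields $\|T_\mu-T_\nu\|^2\le C\left(S^{3/2}\Wass_1^{1/2}+R^qS^{2-q}\right)$, and the choice $S=(R^{2q}/\Wass_1)^{1/(2q-1)}$ concludes.

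\textbf{Step 3} fails as well. Adding a far Dirac mass forces $\Wass_1(\mu,\mu_0)\simeq\e L$, while the maps differ by $\simeq L$ only on $\rho$-mass $\e$. The estimate above with $S$ fixed gives
$\|T_\mu-T_{\mu_0}\|^2\lesssim(\e L)^{1/2}+\e L^2\lesssim\Wass_1^{1/2}+\Wass_1^{\frac{q-2}{q-1}}$ whenever $\e L^q\lesssim R^q$. Both exponents exceed $\frac{q-2}{2q-1}$, so no tuning of $\e,L$ can saturate \eqref{e:2q-1}.

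The paper instead keeps the far mass in both measures and moves it only slightly. Take $\rho$ uniform on $[0,1]^2$, $\mu=(1-m)\delta_0+m\delta_{(r,1/2)}$, and $\nu$ its rotation by an angle $\theta\le m$. Then $\Wass_1\le m\theta r$. Meanwhile the line splitting the source tilts, so the maps differ by $\simeq r$ on $\rho$-mass $\simeq\theta$, giving $\|T_\mu-T_\nu\|^2\gtrsim\theta r^2$. Choosing $\theta=m=r^{-q}$ gives exactly the exponent $\frac14-\frac{3}{4(2q-1)}$.
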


The first sentence below \eqref{e:2q-1} directly follows from \eqref{e:2q-1} using $W_1\leq W_2$.
An important feature of this result is that the exponent does \emph{not} depend on the dimension $d$. This is the only result presented in this paper in which the exponent does not deteriorate when the dimension $d$ gets larger. We expect that a similar dimension-independent estimate also holds for $p$-optimal transport whenever $q>p>1$, see Remark \ref{r:qcostnotdone}, but we will not  establish this here. Instead, combining the stability results of Mischler-Trevisan \cite{mischlertrevisan} and a truncation argument, we prove that the map $\Phi_{p,\rho}^\LOT$ is bi-Hölder on $\Mom_{q,R}$ as soon as $q>p>1$. The Hölder exponent obtained by this argument, however, deteriorates as the dimension grows.

\begin{theorem}\label{t:LOTq}
Let $\rho \in\Prob_{\mathrm{ac}}(\Rsp^d)$ and assume that its density is bounded from above and below by positive constants on its support, which is compact and convex. Let $1<p<q$ and $R\geq 0$. Then there is an exponent $\alpha = \alpha(p,q,d) > 0$ and a constant $C=\cst(p,q,R)$ such that
$$ 
\forall \mu,\nu \in\mathcal{M}_{q,R}, \qquad W_p(\mu,\nu)\leq \nr{T_{\rho\to\mu,p} - T_{\rho\to\nu,p}}_{\LL^p(\rho)} \leq C\Wass_1(\mu,\nu)^{\alpha}.
$$
As a consequence, the mapping $\Phi^\LOT_{p,\rho}$ is a bi-Hölder embedding from $\mathcal{M}_{q,R}$ into $\LL^p(\rho,\Rsp^d)$.
\end{theorem}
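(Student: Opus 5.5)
The lower bound $W_p(\mu,\nu)\leq \nr{T_{\rho\to\mu,p}-T_{\rho\to\nu,p}}_{\LL^p(\rho)}$ follows from the coupling argument recalled above. The plan for the upper bound is a truncation argument that reduces to the compactly supported stability estimate of Mischler--Trevisan \cite{mischlertrevisan}. That estimate says: for $\mu',\nu'\in\Prob(B(0,L))$,
$$\nr{T_{\rho\to\mu',p}-T_{\rho\to\nu',p}}_{\LL^p(\rho)}\leq C(L)\,\Wass_1(\mu',\nu')^{\gamma},$$
with $\gamma=\gamma(p,d)>0$ and $C(L)$ growing at most polynomially in $L$, say $C(L)\leq C L^{a}$. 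We must check this polynomial dependence, presumably by rescaling: dilating everything by $L$ sends $B(0,L)$ to $B(0,1)$ and multiplies the cost $c_p$ by $L^p$, but $\rho$ also moves, so scale-tracking inside their proof is needed.

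\emph{Truncation.} Given $\mu\in\Mom_{q,R}$ and $L>0$, define $\mu_L=(\pi_L)_\#\mu$, where $\pi_L$ is the radial projection onto $\overline{B(0,L)}$. Then $\mu_L\in\Prob(\overline B(0,L))$ and
$$\Wass_p^p(\mu,\mu_L)\leq \int_{|x|>L}|x|^p\dd\mu\leq R^qL^{p-q}.$$
Since $\pi_L$ is $1$-Lipschitz, we also have $\Wass_1(\mu_L,\nu_L)\leq\Wass_1(\mu,\nu)$.

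\emph{Stability of maps under truncation.} We need to bound $\nr{T_{\rho\to\mu,p}-T_{\rho\to\mu_L,p}}_{\LL^p(\rho)}$ by a power of $L^{-1}$. Here the naive route fails, because stability in $\Wass_p$ for unbounded targets is exactly what we are proving. Instead, use the structure of the truncation. Write $T=T_{\rho\to\mu,p}$. Then $\pi_L\circ T$ pushes $\rho$ to $\mu_L$. The plan is to show that $\pi_L\circ T$ is optimal, or nearly optimal, from $\rho$ to $\mu_L$ via $c$-cyclical monotonicity. Failing that, apply the general (non-compact) qualitative-to-quantitative argument of Mischler--Trevisan on the smaller ball $B(0,L)$ to the pair $(\mu_L,(\pi_L\circ T)_\#\rho)$. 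This is equal, so the cost suboptimality has to be measured instead: compare $\int c_p(x,\pi_L T(x))\dd\rho$ with the optimal cost to $\mu_L$, both being within $O(R^qL^{p-q})$ of the cost to $\mu$ on the region where $|T|>L$. Then use the strong-convexity-type estimate for $c_p$ (a quantitative uniqueness bound, $\nr{S-T_{\rho\to\mu_L}}_{\LL^p}^{\max(p,2)}\lesssim$ cost excess times polynomial factors in $L$) to conclude
$$\nr{\pi_L\circ T-T_{\rho\to\mu_L,p}}_{\LL^p(\rho)}\lesssim L^{b}\,L^{-(q-p)\theta}.$$
Combining with $\nr{T-\pi_LT}_{\LL^p(\rho)}^p\leq R^qL^{p-q}$ yields
$$\nr{T-T_{\rho\to\mu_L,p}}_{\LL^p(\rho)}\lesssim L^{-\delta}\quad\text{for some }\delta>0,$$
provided the polynomial loss $L^b$ is beaten. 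This is the main obstacle. If the exponents do not cooperate, an alternative is to truncate only very mildly, or to use the convexity/ concavity of $c$-transforms of Kantorovich potentials together with moment bounds to control $T$ on $\{|T|>L\}$.

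\emph{Conclusion.} By the triangle inequality,
$$\nr{T_\mu-T_\nu}_{\LL^p(\rho)}\lesssim L^{-\delta}+L^{a}\,\Wass_1(\mu,\nu)^\gamma.$$
Optimizing with $L=\Wass_1(\mu,\nu)^{-\gamma/(a+\delta)}$ gives the exponent
$$\alpha=\frac{\gamma\delta}{a+\delta},$$
which depends on $p$, $q$, $d$. For large $\Wass_1(\mu,\nu)$ the estimate is trivial, since both maps have $\LL^p(\rho)$-norm at most $R$. Bi-Hölderness then follows from $\Wass_1\leq\Wass_p$.
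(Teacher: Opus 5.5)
\paragraph{A gap in the truncation step.} Your outline depends on the step you flag as the main obstacle: a uniform bound $\nr{T_{\rho\to\mu,p}-T_{\rho\to\mu_L,p}}_{\LL^p(\rho)}\lesssim L^{-\delta}$ on $\Mom_{q,R}$. The route you propose for it does not work. Radial projection does not preserve $c_p$-cyclical monotonicity, so $S=\pi_L\circ T_{\rho\to\mu,p}$ is in general not optimal. Everything then rests on a ``quantitative uniqueness'' bound controlling $\nr{S-T_{\rho\to\mu_L,p}}$ by the cost excess $E$. The form you suggest, $\nr{S-T_{\rho\to\mu_L,p}}^{\max(p,2)}\lesssim(\mathrm{poly}\,L)\,E$, is already false for $p=2$. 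Take $\rho$ uniform on a square, $\mu_L$ two equal Dirac masses at distance $\ell$, and $S$ exchanging strips of total mass $m$ on both sides of the separating line. Then $\nr{S-T}_{\LL^2(\rho)}^2=m\ell^2$ while $E\asymp \ell m^2$. The same example shows that any bound $\nr{S-T}\lesssim L^bE^\theta$ valid on $B(0,L)$ must have $\theta\le\frac14$ (let $m\to0$) and $b\ge 1-\theta>0$ (take $m\asymp1$, $\ell=L$). The excess you can actually control is only $E\le\int(\psi_{\rho\to\mu,p}-\psi_{\rho\to\mu,p}\circ\pi_L)\dd\mu\lesssim\int_{\abs{y}>L}\abs{y}^p\dd\mu\le R^qL^{p-q}$. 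This truncation error is not multiplied by the small factor $\Wass_1(\mu,\nu)$, so it must decay in $L$ on its own, which requires $b<(q-p)\theta$. Since $b$ and $\theta$ do not depend on $q$, this fails for $q$ close to $p$ as soon as $b>0$; for $p=2$ it forces $q>5$. The truncation bound itself is true a posteriori (it is the theorem applied to $\mu$ and $\mu_L\in\Mom_{q,R}$). But target truncation combined with a generic near-optimality estimate cannot reach the full range $q>p$.

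\paragraph{The missing idea: truncate in the source.} You need to truncate in the \emph{source}, on a set independent of $\mu$ and $\nu$. The paper restricts $\rho$ to $\rho_\eta=\rho|_{\Omega_\eta}$, where $\Omega_\eta=\{x\in\Omega \mid \mathrm{dist}(x,\partial\Omega)\ge\eta\}$ is still convex. Restricting the source keeps $T_{\rho\to\mu,p}$ optimal onto its image, with the same Kantorovich potentials, so no auxiliary optimal map has to be compared with. The proof then has three ingredients.
\begin{enumerate}
\item An interior bound $\abs{T_{\rho\to\mu,p}}\lesssim R^{\frac{p+1}{p-1}}\eta^{-\frac dq\frac{p+1}{p-1}-\frac1{p-1}}$ on $\Omega_\eta$ (Lemma~\ref{l:boundedtransporteta}). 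It comes from cyclical monotonicity: a huge $\abs{T(x)}$ forces $T$ to be large on a conical region of volume $\sim\eta^d$, which contradicts $M_q(\mu)\le R^q$.
\item The radius-tracked Mischler--Trevisan estimate, written with the dual gap $\int(\psi_{\rho\to\mu,p}-\psi_{\rho\to\nu,p})\dd(\nu-\mu)$ rather than with $\Wass_1$ of truncated measures. This gap is monotone under restriction of the source (Corollary~\ref{c:localizedMT}). For the \emph{untruncated} measures it is bounded by $CR^{\frac{q(p-1)}{q-1}}\Wass_1(\mu,\nu)^{\frac{q-p}{q-1}}$, using $\psi(z)-\psi(y)\le C\abs{y-z}(1+\abs{y}+\abs{z})^{p-1}$ and H\"older (Lemma~\ref{l:boundW1-qmoment}).
\item The boundary-layer estimate $\nr{T_{\rho\to\mu,p}}^p_{\LL^p(\rho-\rho_\eta)}\le M_q(\mu)^{p/q}\rho(\Omega\setminus\Omega_\eta)^{1-p/q}\lesssim\eta^{1-p/q}$. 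It carries no polynomial loss, and this is exactly where $q>p$ enters.
\end{enumerate}
The only polynomial loss, $\eta^{-\alpha'}$, now multiplies a positive power of $\Wass_1(\mu,\nu)$, so optimizing in $\eta$ works for every $q>p$. Your first step (tracking the radius dependence in Mischler--Trevisan, where rescaling indeed fails for $p\neq2$) and your final optimization are consistent with what the paper does.
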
 

The following proposition shows that the strict inequality $q>p$ in Theorem \ref{t:LOTq} and  the hypothesis $q>2$ in \Cref{t:p>2} are both sharp.

\begin{proposition}\label{p:turningpointp}
Let $p \in (1,+\infty)$, let $R>0$, and let $\rho$ be uniform on the unit disk in $\Rsp^2$. Then there exist two sequences $(\mu_n)_{n\in\N}$ and $(\nu_n)_{n\in\N}$ of elements of $\mathcal{M}_{p,R}$ such that 
$$
\lim_{n\rightarrow +\infty} W_p(\mu_n,\nu_n)=0, \qquad\liminf_{n\to+\infty}\|T_{\rho\to\mu_n,p}-T_{\rho\to\nu_n,p}\|_{\LL^p(\rho)} > 0.
$$
As a consequence, the mapping $\mu\mapsto T_{\rho\to\mu,p}\in \LL^p(\rho,\Rsp^d)$ is not uniformly continuous (and a fortiori not Hölder) on $\mathcal{M}_{p,R}$.
\end{proposition}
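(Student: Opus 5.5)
We construct a counterexample where a small amount of mass, $\e_n\to 0$, is sent to distance $L_n\to\infty$ with $\e_n L_n^p$ of order one. The $p$-th moment then stays bounded, but the optimal maps from $\rho$ differ macroscopically. The natural mechanism is a rotation of the far-away mass. Fix $\e_n\to 0$ and set $L_n = c\,\e_n^{-1/p}$, with $c$ small so that everything lies in $\Mom_{p,R}$. Let $\theta_n\to 0$ and define
\[
\mu_n = (1-\e_n)\delta_0 + \e_n \delta_{L_n e_1}, \qquad \nu_n = (1-\e_n)\delta_0 + \e_n \delta_{L_n(\cos\theta_n e_1+\sin\theta_n e_2)}.
\]
Both have $p$-moment $\e_n L_n^p = c^p\le R^p$. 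Moving the far mass along the circle gives
\[
W_p(\mu_n,\nu_n)^p \le \e_n (L_n\theta_n)^p = c^p\theta_n^p \to 0 .
\]
(Point masses at $0$ can be replaced by small uniform disks if smoothness is needed, but the maps are well defined regardless, since $\rho$ is absolutely continuous.)

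Next we identify the optimal maps. $T_{\rho\to\mu_n,p}$ sends a region $A_n$ of $\rho$-mass $\e_n$ to $L_n e_1$ and the rest to $0$. For two target points, the $c_p$-optimal partition is a semi-discrete Laguerre-type partition: $A_n=\{x : |x|^p - |x-L_ne_1|^p \ge \lambda_n\}$, with $\lambda_n$ chosen so that $\rho(A_n)=\e_n$. As $L_n\to\infty$, the difference $|x|^p-|x-L_n e_1|^p$ is, to leading order, $pL_n^{p-1}\langle x,e_1\rangle$ plus a constant. So $A_n$ is asymptotically a thin cap of the disk in direction $e_1$, of $\rho$-mass $\e_n$. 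Likewise, $T_{\rho\to\nu_n,p}$ sends a cap $B_n$ in direction $e_{\theta_n}$ to the rotated point. Away from $A_n\cup B_n$ both maps equal $0$. On $A_n\cap B_n$ the two images are at distance about $L_n\theta_n$, while on the symmetric difference $A_n\triangle B_n$ they are at distance about $L_n$.

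The key step is to choose $\theta_n$ so that the caps are essentially disjoint. A cap of mass $\e_n$ in the unit disk has depth $h_n\sim \e_n^{2/3}$ and angular half-width $\sim h_n^{1/2}\sim \e_n^{1/3}$. Take $\theta_n = K\e_n^{1/3}$ with $K$ large, so that $\theta_n\to 0$ and $\rho(A_n\triangle B_n)\ge \e_n$. Then
\[
\|T_{\rho\to\mu_n,p}-T_{\rho\to\nu_n,p}\|_{L^p(\rho)}^p \ge \rho(A_n\triangle B_n)\, L_n^p/2^p \ge c^p/2^p >0,
\]
while $W_p(\mu_n,\nu_n)\lesssim c\,\theta_n\to 0$. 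This is exactly the claim, and the non-uniform-continuity consequence follows immediately. Two-dimensionality and the uniform disk are used only to make the curvature of the boundary explicit. The point is that the boundary is strictly convex, so mass-$\e$ caps in directions separated by more than $\e^{1/3}$ are disjoint.

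The main obstacle is making the cap description rigorous uniformly in $n$. The set $A_n$ is not exactly a half-plane intersected with the disk: its boundary $\{|x|^p-|x-L_ne_1|^p=\lambda_n\}$ is a level set of a function whose gradient points almost along $e_1$ with relative error $O(1/L_n)$. I would handle this by Taylor expanding $|x-L_ne_1|^p$ for $|x|\le 1$, which shows that $A_n$ lies between two half-planes orthogonal to $e_1$ whose distance is $o(h_n)$. Alternatively, I would avoid precise geometry and only prove that $A_n\subset\{\langle x,e_1\rangle\ge 1-Ch_n\}$. This containment already forces $A_n$ and $B_n$ to be disjoint once $\theta_n\gg h_n^{1/2}$, since the two slabs $\{\langle x,e\rangle\ge 1-Ch_n\}$ intersect the disk in disjoint sets. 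Disjointness then gives $\rho(A_n\triangle B_n)=2\e_n$ directly.
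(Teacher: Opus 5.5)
Your construction is essentially the paper's: a mass of order $n^{-p}$ is placed at distance of order $n$ and rotated by an angle of order $n^{-p/3}$, which exceeds the angular width of a disk cap of that mass, so the two maps send disjoint caps to points at mutual distance of order $n$ while $W_p\to 0$ (the paper uses two antipodal atoms at $\pm(n,\varepsilon n)$ with $\varepsilon=4n^{-p/3}$, and simply asserts that the cells are exact vertical cuts, which is literally true only for $p=2$). Your concern about the curved boundary of $A_n$ is therefore legitimate, but for $p\neq 2$ a Taylor expansion that is uniform over the whole disk only places $\partial A_n$ within $O(L_n^{-1}+L_n^{1-p})$ of a vertical line, and this is much larger than your cap depth $h_n$; to get the slab containment you should localize, e.g.\ by using that $g(x)=|x|^p-|x-L_ne_1|^p$ satisfies $\partial_{x_2}g=p\,x_2\,(|x|^{p-2}-|x-L_ne_1|^{p-2})$ and $\partial_{x_1}g\gtrsim L_n^{p-1}$ on the disk, so that near the cap the level curve bends only by $O((L_n^{-1}+L_n^{1-p})\,x_2^2)$, which is negligible against the circle's $x_2^2/2$.
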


To complete our study of the LOT embedding, we prove a version of Theorem \ref{t:p>2} that holds for any relatively compact subset $\Mom \subseteq \Prob_2(\Rsp^d)$, from which Theorem~\ref{t:p>2} will actually be deduced. Since $\Phi_{2,\rho}^\LOT:\mu \in\Mom \subseteq \Prob_2(\Rsp^d) \mapsto T_{\rho\to \mu,2}$  is continuous, Heine's theorem already implies that its restriction to a relatively compact set is uniformly continuous. 
In the next theorem, we make this uniform continuity quantitative by establishing an explicit modulus depending only  on a uniform control on the quadratic tails of the measures in $\Mom$. To this end, given any subset $\Mom$ of $\Prob_2(\Rsp^d)$, we will denote 
\begin{equation}
\label{e:f(S)} f_\Mom(S) = \sup_{\mu\in \Mom} \int \abs{x}^2 \mathbf{1}_{\abs{x}>S} \dd\mu(x).
\end{equation}
With this notation, the set $\Mom$ is relatively compact if and only if $\lim_{S\to +\infty} f_\Mom(S) = 0$, as shown for instance in \cite[Proposition 2.2.3]{panaretos}.
\begin{theorem}\label{t:relativecompact} 
Let $\rho \in\Prob_{\mathrm{ac}}(\Rsp^d)$ and assume that its density is bounded from above and below by positive constants on its support, which is compact and convex. Let $\mathcal{M}$ be a subset of $(\mathcal{P}_2(\R^d),W_2)$, and define $f_{\Mom}$ as in \eqref{e:f(S)}. Then,
$$
\forall \mu,\nu\in\mathcal{M}, \qquad W_2(\mu,\nu)\leq \|T_{\rho\rightarrow \mu,2}-T_{\rho\rightarrow \nu,2}\|_{\LL^2(\rho)}\leq C\omega_\Mom(W_2(\mu,\nu)),
$$
where the constant $C\geq 0$ depends only on $\rho$ and $d$ and where $$\omega_\Mom(r) =  \sqrt{\inf_{S>0} S^{\frac{3}{2}} r^{\frac{1}{2}} + f_\Mom(S)}.$$ In particular, if the set $\Mom$ is relatively compact in $\Prob_2(\Rsp^d)$, then the mapping $\Phi^\LOT_{2,\rho}$ is uniformly continuous with continuity modulus $C\omega_\Mom$.
\end{theorem}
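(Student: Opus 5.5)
The lower bound $W_2(\mu,\nu)\le \|T_\mu-T_\nu\|_{L^2(\rho)}$ is the coupling argument already recalled: $(T_\mu,T_\nu)_\#\rho$ is a coupling of $\mu$ and $\nu$. For the upper bound I will combine a truncation with a stability estimate for compactly supported targets. I write $T_\mu=\nabla\phi_\mu$ with $\phi_\mu$ convex. The key input is a quantitative stability estimate for measures on a ball $B(0,S)$, tracked explicitly in $S$. The claim is that for $\mu',\nu'\in\Prob(B(0,S))$,
$$\|T_{\mu'}-T_{\nu'}\|_{L^2(\rho)}^2\le C\, S^{3/2}\,W_1(\mu',\nu')^{1/2}.$$
This is the Delalande--Mérigot/Mérigot estimate, which is of the form $\|T_{\mu'}-T_{\nu'}\|^2\lesssim \|\psi_{\mu'}-\psi_{\nu'}\|_{L^1}$-type bounds via the Kantorovich dual. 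I would rederive it with its scaling: the dual potentials $\psi$ on $B(0,S)$ are $C S$-Lipschitz. The difference of dual functionals is then bounded by $S\,W_1(\mu',\nu')$. A Gagliardo--Nirenberg-type interpolation between the $L^1$ bound on $\phi_{\mu'}-\phi_{\nu'}$ and the $L^\infty$ bound $\|\nabla\phi\|\le S$ converts this into the gradient bound with the powers $S^{3/2}r^{1/2}$. Checking homogeneity, $S^{3/2}r^{1/2}$ has the dimension of length squared, which is consistent.

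\textbf{Truncation.} Given $\mu,\nu\in\Mom$ and $S>0$, I set $P_S$ to be the radial projection onto $\bar B(0,S)$, $x\mapsto x\min(1,S/|x|)$, and define $\mu_S=(P_S)_\#\mu$ and $\nu_S=(P_S)_\#\nu$. Two facts are needed.

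First, $P_S$ is 1-Lipschitz, so $W_1(\mu_S,\nu_S)\le W_1(\mu,\nu)\le W_2(\mu,\nu)$.

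Second, $T_{\mu}$ and $T_{\mu_S}$ are close. Note that $P_S\circ T_\mu$ is generally not optimal, so I use a different route: the stability of transport maps in terms of the target. By Brenier and the fact that both are gradients of convex functions, I would exploit the estimate $\|T_\mu-T_{\mu_S}\|_{L^2}^2\lesssim$ something controlled by $\int|x-P_S x|^2\dd\mu$, and hence by $f_\Mom(S)$. The cleanest way is to avoid comparing $T_\mu$ and $T_{\mu_S}$ directly. Instead I would run the whole stability argument on the dual side for $\mu$ and $\nu$ themselves, splitting integrals of the dual potentials over $\{|y|\le S\}$ and $\{|y|>S\}$. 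On the bounded part the potentials are Lipschitz with constant about $S$, which gives the $S^{3/2}r^{1/2}$ term. On the tail, the potentials grow at most quadratically, since $|\psi(y)|\lesssim 1+|y|^2$ given that $\rho$ has compact support. This yields a contribution $\lesssim f_\Mom(S)$, uniformly in $\mu,\nu\in\Mom$.

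\textbf{Conclusion and main obstacle.} Summing the two contributions gives $\|T_\mu-T_\nu\|^2\le C(S^{3/2}W_2^{1/2}+f_\Mom(S))$. Taking the infimum over $S$ and a square root gives $C\omega_\Mom(W_2(\mu,\nu))$. If $\Mom$ is relatively compact, then $f_\Mom(S)\to0$, so $\omega_\Mom(r)\to0$ as $r\to0$, which gives uniform continuity.

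The main obstacle is the second step: obtaining the tail control $f_\Mom(S)$ with no loss. This requires the dual/convexity argument to tolerate unbounded targets, which is where the previous $q>d$ restrictions arose. I would first try the strong convexity inequality
$$\int|\nabla\phi_\mu-\nabla\phi_\nu|^2\dd\rho\lesssim \int(\phi_\mu-\phi_\nu)\,\dd(\ldots)$$
coming from $\rho$ bounded below on a convex set, with the Lipschitz cutoff of the test functions at radius $S$.
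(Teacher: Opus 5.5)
\textbf{There is a genuine gap in the truncation step.}

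Write $\mu_0=\mu$, $\mu_1=\nu$, $T_i=T_{\rho\to\mu_i,2}=\nabla\phi_i$ with $\phi_i=+\infty$ off $\spt(\rho)$, and $\psi_i=\phi_i^*$. Your lower bound and the final optimization in $S$ are fine. The truncation step, which you yourself flag as the main obstacle, is not carried out, and neither route you sketch works.

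The point you miss is that $\psi_i$ is Lipschitz with constant $L=\max_{\spt(\rho)}\abs{x}$, independent of any target radius. It is $\phi_i$ whose Lipschitz constant is governed by the target radius. (Your compact-case bookkeeping, with $CS$-Lipschitz $\psi$ and a dual bound $S\,\Wass_1$, would also not produce the power $S^{3/2}$.) Consequently $\int(\psi_0-\psi_1)\,\dd(\mu_1-\mu_0)\le L\,\Wass_1(\mu_0,\mu_1)$ holds for arbitrary unbounded targets, with no splitting. The whole difficulty sits in the other step, $\nr{\nabla\psi_0^*-\nabla\psi_1^*}^4_{\LL^2(\rho)}\le CS^3B_\rho(\psi_0,\psi_1)$. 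That estimate needs the gradients to lie in $B(0,S)$, and its constant degenerates as $S\to\infty$.

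So your two routes fail as follows:
\begin{itemize}
\item Splitting the dual functional over $\{\abs{y}\le S\}$ and $\{\abs{y}>S\}$ treats a quantity that is already under control. Even if a tail contribution $f_\Mom(S)$ to the divergence could be pushed through the stability estimate, it would come out as $S^{3/2}f_\Mom(S)^{1/2}$, not as an additive $f_\Mom(S)$.
\item The radial-projection route needs $\nr{T_{\rho\to\mu,2}-T_{\rho\to\mu_S,2}}^2_{\LL^2(\rho)}\lesssim\int\abs{x-P_Sx}^2\dd\mu$. This is a Lipschitz-type stability bound that is not available.
\end{itemize}
Hence $\nr{T_0-T_1}^2\le C(S^{3/2}\Wass_2^{1/2}+f_\Mom(S))$ does not follow from what you wrote.

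\textbf{The missing idea is to truncate on the source side, using the maps themselves.} Let $E_S=\{x:\abs{T_0(x)}\le S,\ \abs{T_1(x)}\le S\}$ and $\rho_S=\mathbf{1}_{E_S}\rho$.

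Off $E_S$ one has $\abs{T_0-T_1}^2\le 4\sum_i\abs{T_i}^2\mathbf{1}_{\abs{T_i}>S}$. Since $T_{i\#}\rho=\mu_i$, $\int\abs{T_i}^2\mathbf{1}_{\abs{T_i}>S}\,\dd\rho=\int_{\abs{y}>S}\abs{y}^2\dd\mu_i\le f_\Mom(S)$. This is where the additive $f_\Mom(S)$ comes from, with no stability input at all.

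On $E_S$, replace $\psi_i$ by $\psi_{i,S}$, its restriction to $B(0,S)$ extended by $+\infty$. By the Fenchel--Young equality, $\psi_{i,S}^*=\phi_i$ and $\nabla\psi_{i,S}^*=T_i$ on $\{\abs{T_i}\le S\}$. The compact-target estimate \cite[Theorem 2.2]{merigot} then gives $\nr{T_0-T_1}^4_{\LL^2(\rho_S)}\le CS^3B_{\rho_S}(\psi_{0,S},\psi_{1,S})$. On $E_S$ the integrand of this divergence equals the Fenchel--Young gap $G_{\phi_1}(x,\nabla\phi_0(x))\ge0$, so the divergence is at most $B_\rho(\psi_0,\psi_1)$. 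By nonnegativity of both Bregman--Kantorovich divergences, this is at most $B_\rho(\psi_0,\psi_1)+B_\rho(\psi_1,\psi_0)=\int(\psi_0-\psi_1)\,\dd(\mu_1-\mu_0)\le L\,\Wass_1(\mu_0,\mu_1)$.

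Summing the two regions gives $\nr{T_0-T_1}^2_{\LL^2(\rho)}\le C\bigl(S^{3/2}\Wass_1^{1/2}+f_\Mom(S)\bigr)$, which is the bound you need.
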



\subsection{Embedding via sliced Wasserstein distances} 
We now turn to the second type of embedding, which is directly derived from the sliced-Wasserstein distance. The $p$-sliced-Wasserstein distance was introduced in \cite{rabinpeyredelonbernot} as a computationally cheaper alternative to the standard Wasserstein distance. It can be defined by averaging $p$th powers of the $p$-Wasserstein distances between projections of the two measures on lines. Given a direction $\theta\in\Sph^{d-1}$, we let $P_\theta: x\in\Rsp^d\mapsto \sca{x}{\theta} \in\Rsp$ where $\sca{x}{\theta}$ denotes the Euclidean scalar product, and we set
$$\SlicedWass_p^p(\mu,\nu) = \int_{\Sph^{d-1}} \Wass_p^p(P_{\theta\#}\mu, P_{\theta\#}\nu)\dd\theta,$$
where $\dd\theta$ is the uniform probability measure on the sphere $\Sph^{d-1}$. Since the $p$-Wasserstein distance
between two measures on the real line equals the $\LL^p$ distance between their quantile functions, it follows that $\SlicedWass_p(\mu,\nu) = \nr{\Phi_p^\SW(\mu) - \Phi_p^\SW(\nu)}_{\LL^p(\Sph^{d-1} \times [0,1])}$, where the \emph{sliced-Wasserstein embedding} is defined by
\begin{equation}\label{eq:sw-embedding} \begin{aligned}
    \Phi_p^\SW : \Prob_p(\Rsp^d) \to \LL^p(\Sph^{d-1}\times[0,1])\\
\qquad \mu \mapsto (\theta,u) \mapsto Q_{P_{\theta\#}\mu}(u).
\end{aligned} 
\end{equation}
Since the projections $P_\theta$ are $1$-Lipschitz, one has $\Wass_p(P_{\theta\#}\mu, P_{\theta\#}\nu) \leq \Wass_p(\mu,\nu)$. Integrating this inequality then shows that $\Phi^\SW_p$ does not increase distances, i.e.
$$ \SlicedWass_p(\mu,\nu) = \nr{\Phi_p^\SW(\mu) - \Phi_p^\SW(\nu)}_{\LL^p(\Sph^{d-1} \times [0,1])} \leq \Wass_p(\mu,\nu).$$
The reverse inequality, showing that the Wasserstein distance can (sometimes) be controlled by a power of the sliced-Wasserstein distance, or equivalently that $\Phi_p^\SW$ is reverse-Hölder, is a nontrivial fact first established by Bonnotte (see \Cref{th:bonnotte}). The corresponding qualitative statement is much older: by the Cram\'er-Wold theorem, a probability measure is uniquely determined by its one-dimensional projections, which implies that $\Phi_p^\SW$ is injective and that $\SlicedWass_p$ is a distance. Quantitative versions of the Cram\'er-Wold theorem have been the subject of recent work by Bobkov and Götze \cite{bobkov}. Carlier, Figalli, Mérigot and Wang have established that $\Phi_p^\SW$ is bi-Hölder in the compact setting, i.e. on $\Mom_{\infty,R} = \Prob(B(0,R))$, but also when the measures have bounded exponential moments (Corollary 2.2 in \cite{carlierfigallimerigotwang}). Here, we obtain a similar bi-Hölder estimate  for $\Phi_p^\SW$ but on the much larger class $\Mom_{q,R}$, for any $q>p$.

\begin{theorem}\label{t:compareWpSWpholder} Let $q>p\geq 1$ and let $R\geq 0$. Then, the $p$-sliced-Wasserstein embedding is bi-Hölder on $\Mom_{q,R}$. More precisely, it satisfies for some
constant $C = \cst(d,p,q,R)>0$,
$$ C \Wass_p(\mu,\nu)^{\alpha} \leq \nr{\Phi^\SW_p(\mu) - \Phi^\SW_p(\nu)}_{\LL^p(\Sph^{d-1}\times[0,1])} \leq \Wass_p(\mu,\nu)$$
where $\alpha=\frac{p(dq-1)}{q-p}$ if $d$ is odd and $\alpha =
   \frac{p((d+1)q-1)}{q-p}$ if not.
\end{theorem}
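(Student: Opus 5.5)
The upper bound $\nr{\Phi^\SW_p(\mu)-\Phi^\SW_p(\nu)}\leq \Wass_p(\mu,\nu)$ was already shown above, so only the lower bound needs proof. The plan is a truncation argument that reduces to the compact case. We use a compactly supported comparison estimate of Bonnotte type, i.e. \Cref{th:bonnotte}. Its Fourier/Radon-based proof also gives a version with $\Wass_1$ and $\SlicedWass_1$: for measures on $B(0,S)$,
$$\Wass_1(\mu,\nu)\leq C_d\, S^{a}\,\SlicedWass_1(\mu,\nu)^{1/\gamma},$$
where $\gamma = d$ for $d$ odd and $\gamma=d+1$ for $d$ even. This parity reflects the different inversion formulas for the Radon transform, local in odd dimension and nonlocal in even dimension, which is why the exponent $\alpha$ of the statement has this parity dependence. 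We track the polynomial dependence on $S$ explicitly.

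\textbf{Step 1 (truncation).} Fix $S\geq R$ and let $\pi_S$ denote the radial projection onto $\bar B(0,S)$, which is $1$-Lipschitz. Set $\mu_S=\pi_{S\#}\mu$ and $\nu_S=\pi_{S\#}\nu$. For $\mu\in\Mom_{q,R}$,
$$\Wass_p^p(\mu,\mu_S)\leq \int_{|x|>S}|x|^p\dd\mu\leq R^q S^{p-q}.$$
The same bound holds for $\nu$. Projections onto lines nearly commute with this truncation: $P_{\theta\#}\mu$ and $P_{\theta\#}\mu_S$ are coupled via $x\mapsto(\sca{x}{\theta},\sca{\pi_S x}{\theta})$, so $\Wass_p(P_{\theta\#}\mu,P_{\theta\#}\mu_S)\leq \Wass_p(\mu,\mu_S)$. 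Hence, by the triangle inequality,
$$\SlicedWass_p(\mu_S,\nu_S)\leq \SlicedWass_p(\mu,\nu)+2R^{q/p}S^{1-q/p}.$$

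\textbf{Step 2 (compact comparison and passage from $W_1$ to $W_p$).} On $B(0,S)$ we have $\Wass_p^p\leq (2S)^{p-1}\Wass_1$ and $\SlicedWass_1\leq \SlicedWass_p$. Combining these with the compact estimate gives
$$\Wass_p(\mu_S,\nu_S)\leq C S^{b}\,\SlicedWass_p(\mu_S,\nu_S)^{1/(p\gamma')}$$
for an appropriate $\gamma'$. Collecting the truncation errors,
$$\Wass_p(\mu,\nu)\leq 2R^{q/p}S^{1-q/p}+CS^{b}\bigl(\SlicedWass_p(\mu,\nu)+2R^{q/p}S^{1-q/p}\bigr)^{1/(p\gamma')}.$$
We then optimize over $S$, choosing $S=\SlicedWass_p(\mu,\nu)^{-\lambda}$. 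Balancing the truncation term $S^{-(q-p)/p}$ against the main term gives a power law $\Wass_p\lesssim \SlicedWass_p^{1/\alpha}$. The denominator $q-p$ of $\alpha$ comes from the tail decay rate, and the factor $dq-1$ (resp. $(d+1)q-1$) comes from the $S$-dependence of the compact constant combined with the rate.

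\textbf{Main obstacle.} The main difficulty is the second truncation error inside the compact estimate: the term $S^{b}(S^{1-q/p})^{1/(p\gamma')}$ must still decay. This fails for large $b$. The remedy is to avoid truncating both measures crudely. Instead, we would prove the compact-type estimate directly for $\mu,\nu$ with a smoothing/Fourier argument. We estimate $\Wass_1(\mu,\nu)$ by duality against $1$-Lipschitz functions $f$ with $f(0)=0$, split as $f=f\chi_S+f(1-\chi_S)$. The tail part is bounded by $\int_{|x|>S}|x|\dd(\mu+\nu)\leq 2R^qS^{1-q}$. The compactly supported part is handled after mollification at scale $\e$, using the Radon inversion formula: we write $\int (f\chi_S)*\eta_\e\,\dd(\mu-\nu)$ as an integral over $\theta$ of 1D pairings. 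These are controlled by $\SlicedWass_1\leq\SlicedWass_p$ times $\e^{-(\gamma-1)}S^{\cdot}$, via derivatives of order $d-1$ in odd dimension and a Hilbert-transform loss in even dimension. Optimizing over $\e$ and $S$ yields the stated $\alpha$ after converting $\Wass_1$ to $\Wass_p$ by the interpolation $\Wass_p^p\leq \Wass_1^{(q-p)/(q-1)}M_q^{(p-1)/(q-1)}$-type Hölder inequality. That conversion plausibly produces the exact form $\frac{p(dq-1)}{q-p}$.
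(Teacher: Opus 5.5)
Your final argument, the one in the ``Main obstacle'' paragraph, is essentially the paper's proof. The paper truncates the $1$-Lipschitz Kantorovich potential rather than the measures, with a tail of order $R^qL^{1-q}$. It controls the compactly supported part by mollification and Radon inversion (Lemma~\ref{lem:carlierfigallimerigotwang-test}), optimizes to get $\Wass_1\lesssim \SlicedWass_1^{(q-1)/(dq-1)}$ (Theorem~\ref{t:comparW1sW1}), and then concludes with $\SlicedWass_1\leq\SlicedWass_p$ and the moment interpolation of Lemma~\ref{l:wassboundedmoments}, exactly as you propose.

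Your Steps 1--2 should simply be discarded, as you yourself diagnosed: for odd $d$ the leftover truncation error is of order $S^{1-q/(p^2d)}$, which decays only when $q>p^2d$. For even $d$, the paper avoids your unspecified Hilbert-transform estimate by viewing $\mu,\nu$ as measures on $\Rsp^{d+1}$. This leaves $\Wass_1$ and $M_q$ unchanged and only multiplies $\SlicedWass_1$ by a dimensional constant.
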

The dependence of the exponent on the left-hand side on the ambient dimension is unavoidable, reflecting the fact that the sliced-Wasserstein distance is much weaker than the standard Wasserstein distance. In the compact case, this is explained in \S4.2 in \cite{merigot2026embedding}. We also prove the following theorem, which may be of independent interest.
\begin{theorem} \label{t:sw}
Let $q>1$, $R>0$, and let $p,r\in [1,q)$. Then, the distances $\SlicedWass_p$, $\SlicedWass_r$, $\Wass_p$ and $\Wass_r$ are all bi-Hölder equivalent on $\Mom_{q,R}$. 
\end{theorem}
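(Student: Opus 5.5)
The plan is to build a chain of bi-Hölder comparisons on $\Mom_{q,R}$ through one pivot distance, $\Wass_1$; bi-Hölder equivalence is transitive. For each $s\in[1,q)$ we show that $\Wass_s$ and $\SlicedWass_s$ are both bi-Hölder equivalent to $\Wass_1$ on $\Mom_{q,R}$. Applying this to $s=p$ and $s=r$ gives the claim.

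\textbf{Step 1: $\Wass_s$ versus $\Wass_1$.} By Jensen, $\Wass_1\le \Wass_s$. For the converse, take an optimal $\Wass_1$ plan $\gamma$ between $\mu,\nu\in\Mom_{q,R}$ and a truncation level $S>0$. On $\{|x-y|\le S\}$ we use $|x-y|^s\le S^{s-1}|x-y|$. On the complement we use $|x-y|^s\le |x-y|^q S^{s-q}$, and the $q$-th moment of $|x-y|$ under $\gamma$ is at most $2^{q}R^q$. This gives
$$\Wass_s^s(\mu,\nu)\le S^{s-1}\Wass_1(\mu,\nu)+C(q,R)\,S^{s-q}.$$
Optimizing in $S$ yields $\Wass_s\le C\,\Wass_1^{\frac{q-s}{s(q-1)}}$. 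This step needs $s<q$.

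\textbf{Step 2: $\SlicedWass_s$ versus $\SlicedWass_1$.} We apply Step 1 slice by slice. The measures $P_{\theta\#}\mu$ lie in $\Mom_{q,R}(\Rsp)$, so
$$\Wass_s^s(P_{\theta\#}\mu,P_{\theta\#}\nu)\le S^{s-1}\Wass_1(P_{\theta\#}\mu,P_{\theta\#}\nu)+CS^{s-q}.$$
Integrating in $\theta$ and optimizing $S$ gives $\SlicedWass_s\le C\,\SlicedWass_1^{\frac{q-s}{s(q-1)}}$. In the other direction, Jensen on $[0,1]$ for the quantile difference and then on $\Sph^{d-1}$ gives $\SlicedWass_1\le \SlicedWass_s$.

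\textbf{Step 3: $\SlicedWass_1$ versus $\Wass_1$.} The upper bound $\SlicedWass_1\le\Wass_1$ holds since projections are $1$-Lipschitz. The reverse Hölder bound is the substantive point. The simplest route is to invoke Theorem~\ref{t:compareWpSWpholder} with $p=1<q$, which gives
$$C\,\Wass_1(\mu,\nu)^{\alpha}\le \SlicedWass_1(\mu,\nu)\le \Wass_1(\mu,\nu)$$
on $\Mom_{q,R}$, with $q>1$ as assumed.

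\textbf{Combining.} For $s\in\{p,r\}$, Steps 1–3 show that $\Wass_s$, $\Wass_1$, $\SlicedWass_1$ and $\SlicedWass_s$ are pairwise bi-Hölder equivalent on $\Mom_{q,R}$. All these distances are bounded there by a constant depending on $q,R$. Hence any composition of Hölder bounds with possibly different exponents still gives an inequality of the form $D_1\le C D_2^{\beta}$. This concludes the proof.

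The main obstacle is Step 3, the reverse Hölder inequality for the sliced distance. Here I rely on Theorem~\ref{t:compareWpSWpholder}. If that theorem's proof instead depends on the present statement, then Step 3 must be proved directly. The direct route would be to truncate the measures to $B(0,S)$ at a $\Wass_1$ cost of $O(S^{1-q})$ controlled by the moment bound. One would then apply Bonnotte's compact estimate, Theorem~\ref{th:bonnotte}, with its explicit $R$-dependence, and finally optimize over $S$. The delicate point is that truncation does not commute with projection. One therefore has to show that $\SlicedWass_1$ of the truncated measures is controlled by $\SlicedWass_1$ of the originals plus a tail term of order $O(S^{1-q})$, which should again follow from the moment bound.
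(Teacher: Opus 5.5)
Your proof is correct and is essentially the paper's. The paper also chains through $\Wass_1$ and $\SlicedWass_1$: it combines the moment interpolation of Lemma~\ref{l:wassboundedmoments} and Corollary~\ref{c:comparWSW} with Theorem~\ref{t:comparW1sW1}, and your split at level $S$ gives the same exponent $\frac{q-s}{s(q-1)}$. Theorem~\ref{t:comparW1sW1} is exactly Theorem~\ref{t:compareWpSWpholder} at $p=1$ and is proved independently, so citing it is not circular.

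Your fallback sketch would not have worked for all $q>1$, though. Bounding $\SlicedWass_1$ of the truncated measures is immediate from the triangle inequality and $\SlicedWass_1\le\Wass_1$, so that is not the delicate point. The problem is that the resulting $O(S^{1-q})$ error then sits inside the small Hölder power of the compact estimate. This produces a term of order $S^{\frac{d+1-q}{d+1}}$, which decays only when $q>d+1$. The paper avoids this by truncating the Kantorovich potential instead of the measures. It then applies Lemma~\ref{lem:carlierfigallimerigotwang-test}, which is linear in the test function and valid for arbitrary $\mu,\nu$, so the tail enters additively.
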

These positive results are complemented by a counterexample showing that the inverse of $\Phi^\SW_p$ is not uniformly continuous on $\Phi^\SW_p(\Mom_{p,R})$ for any integer $p \geq 1$, when $d\geq 2$  and $R>0$. Equivalently, the $\SlicedWass_p$ and $\Wass_p$ distances are \emph{not} uniformly equivalent in this case. Our proof techniques do not handle the case of non-integer $p\geq 1$. The corresponding result for non-integer $p\geq 1$ remains open. 

\begin{proposition}[\Cref{th:nonequiv-sw}]
Let $p\geq 1$ be an integer and let $d\geq 2$. Then, there exist probability measures $(\mu_n)_{n\geq 0}$ and $(\nu_{n})_{n\geq 0}$ on $\Rsp^d$ such that $M_p(\mu_n) = M_p(\nu_n) = 1$ and 
$$\liminf_{n\to +\infty} \Wass_p(\mu_n,\nu_n) > 0, \quad \lim_{n\to+\infty}\SlicedWass_p(\mu_n,\nu_n) = \lim_{n\to+\infty} \nr{\Phi_p^\SW(\mu_n) - \Phi_p^\SW(\nu_n)}_{\LL^p(\Sph^{d-1}\times[0,1])} = 0.$$  
\end{proposition}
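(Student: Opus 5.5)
The idea is to build $\mu_n,\nu_n$ that agree on all one-dimensional projections up to a small error, yet differ macroscopically in $\Wass_p$. Since $p$ is an integer, we use polynomial moments: two measures whose moments agree up to order $p$ should have close $p$-moments of every projection. The plan is to write $\mu_n = (1-\e_n)\mu + \e_n \alpha_n$ and $\nu_n = (1-\e_n)\mu + \e_n\beta_n$. Here $\mu$ is a fixed common bulk, for instance $\delta_0$ or a fixed compactly supported measure. The measures $\alpha_n,\beta_n$ live at distance $\lambda_n\to\infty$, and we take $\e_n\lambda_n^p\asymp 1$. Then the $p$-moment is of order one, and after rescaling the bulk slightly we can normalize $M_p(\mu_n)=M_p(\nu_n)=1$.

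\textbf{Lower bound on $\Wass_p$.} Choose $\alpha = \alpha_1$ and $\beta=\beta_1$ to be two distinct probability measures on the unit sphere, and set $\alpha_n = (\lambda_n)_\#\alpha$ and $\beta_n=(\lambda_n)_\#\beta$, the dilations by $\lambda_n$. We require that $\alpha$ and $\beta$ have the same moments $\int x^{\otimes k}$ for all $k\le p$, but are not equal. Such a pair exists when $d\ge2$: take a nonzero signed measure on $\Sph^{d-1}$ annihilating all polynomials of degree $\le p$, for example a high-frequency spherical harmonic density $\cos(m\phi)$ with $m>p$. For $W_p$ to stay bounded below, we need that the mass $\e_n$ of $\alpha_n$ at radius $\lambda_n$ cannot be transported cheaply onto $\beta_n$. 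Because the two measures differ at angular scale of order $1$, any coupling must move a fixed fraction $c$ of the mass by distance of order $\lambda_n$, or else send it to the bulk. Either way the cost is at least $c\,\e_n\lambda_n^p\gtrsim 1$. This step is a rescaled version of the fact that $\Wass_p(\alpha,\beta)>0$, combined with the observation that routing through the bulk costs about $\lambda_n^p$ per unit mass.

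\textbf{Upper bound on $\SlicedWass_p$.} This is the main obstacle. For each $\theta$ the projections $P_{\theta\#}\alpha_n$ and $P_{\theta\#}\beta_n$ are supported in $[-\lambda_n,\lambda_n]$ and have identical moments up to order $p$. We need $\e_n\Wass_p^p(P_{\theta\#}\alpha_n,P_{\theta\#}\beta_n)$ to be small on average, but the naive bound is of order $\e_n\lambda_n^p\asymp1$. I would instead exploit matching moments through the one-dimensional quantile picture. Equal moments up to order $p$ of the mixtures $(1-\e_n)P_{\theta\#}\mu + \e_n P_{\theta\#}\alpha_n$ do not by themselves give small $\Wass_p$, so I would refine the construction. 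The far mass would be spread over a continuum of radii $\lambda\in[\lambda_n,\Lambda_n]$, with weights chosen so that $M_p$ stays bounded while each dyadic shell carries only a vanishing share $\delta_n$ of the $p$-moment. Equal integer moments up to order $p$ mean that $|t|^p$ for even $p$ (and $t^p$ in general) integrates identically against both measures. This suggests a Taylor/duality bound: $\Wass_p^p$ between one-dimensional measures is controlled by their $p$-moment difference plus a lower-order term. A cleaner route is to use $\Wass_p^p\le C\int |t|^{p-1}|F_1-F_2|(t)\,\dd t$ in one dimension. Since the angular difference on a shell of radius $\lambda$ smooths out under projection, with $P_{\theta\#}(\alpha-\beta)$ having density oscillating at frequency $m$, averaging in $\theta$ gains a small factor. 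In the worst case we can also make the spherical harmonic degree $m_n\to\infty$.

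The concrete plan for this step is therefore the following. Use radius $\lambda_n$, mass $\e_n=\lambda_n^{-p}$, and angular profiles $1\pm\cos(m_n\phi)$ with $m_n\to\infty$. The difference of the distribution functions of the projections is then $O(1/m_n)$ after integration, so $\SlicedWass_p^p\lesssim \e_n\lambda_n^p/m_n\to0$. Meanwhile $\Wass_p$ stays bounded below because the two angular profiles have mass concentrated on disjoint arcs. I expect the delicate point to be that $\Wass_p$ lower bound: with $m_n\to\infty$ the arcs become short, so one can transport at cost $\lambda_n/m_n$, and $\Wass_p$ would also vanish. Reconciling the two bounds is exactly where the integer moment cancellation (rather than oscillation) must be used, and it is where I would concentrate effort.
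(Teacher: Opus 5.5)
Your proposal does not yet amount to a proof, and the key step is missing. Your final concrete plan fails, as you suspect. The profiles $1\pm\cos(m_n\phi)$ are rotations of each other by $\pi/m_n$, so $\Wass_p^p(\mu_n,\nu_n)\lesssim\e_n(\lambda_n/m_n)^p\to0$. A single radius with a fixed pair $\alpha\neq\beta$ fails the other way. The projections then retain the whole shape of $P_{\theta\#}\alpha$ at scale $\lambda_n$. By Cram\'er--Wold these shapes differ for a set of $\theta$ of positive measure, so $\SlicedWass_p$ stays bounded below.

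The idea you mention in passing and then drop is the right one, and it is the paper's. Keep a fixed angular pair, and let $\mu_R,\nu_R$ be the images of $\rho_R\otimes\alpha$ and $\rho_R\otimes\beta$ under $(r,\sigma)\mapsto r\sigma$, where $\rho_R=(1-m_R)\delta_0+\frac{1}{\log R}\mathbf{1}_{[1,R]}\,r^{-p-1}\dd r$. This profile has unit $p$-th moment and is self-similar. For $0<\abs{a}\leq1$, let $D_a$ be the dilation by $a$. Away from the atom, $(D_a)_\#\rho_R$ equals $\abs{a}^p$ times the continuous part of $\rho_R$ (reflected if $a<0$). The only exceptions are the log-scale ends $[\abs{a},1]$ and $[\abs{a}R,R]$, which carry $p$-th moment $O(\abs{a}^p\abs{\log\abs{a}}/\log R)$.

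Since $P_{\theta\#}\mu_R=\int(D_a)_\#\rho_R\,\dd\lambda_\theta(a)$ with $\lambda_\theta=P_{\theta\#}\alpha$, the projection depends on $\alpha$ only through $M_{p,\pm}(\lambda_\theta)=\int a_\pm^p\,\dd\lambda_\theta(a)$, up to $O(1/\log R)$ in $\Wass_p^p$. No oscillation or averaging in $\theta$ is needed, and the bound is uniform in $\theta$. The lower bound survives the spreading of radii. Testing against $f(x)=\abs{x}^pg(x/\abs{x})$, with $g$ $1$-Lipschitz on $\Sph^{d-1}$, gives $\Wass_1(\alpha,\beta)\leq2(2p+1)\Wass_p(\mu_R,\nu_R)$ for any radial profile with unit $p$-th moment. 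This is a clean replacement for your coupling heuristic.

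The second gap is the matching condition: equal polynomial moments up to order $p$ are not enough. The function $f(t)=t_+^p$ satisfies $\abs{f(s)-f(t)}\leq p\max(\abs{s},\abs{t})^{p-1}\abs{s-t}$. The same H\"older argument then gives $\Wass_p(P_{\theta\#}\mu_n,P_{\theta\#}\nu_n)\geq c\,\abs{C_p^+[\alpha](\theta)-C_p^+[\beta](\theta)}$, where $C_p^+[\alpha](\theta)=\int\sca{\sigma}{\theta}_+^p\dd\alpha(\sigma)$. Here $c>0$ depends only on $p$ and on the $p$-th moment carried by the far part, whatever the radial profile. So you need $C_p^+[\alpha]=C_p^+[\beta]$.

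Polynomial moments only give equality of $\int\sca{\sigma}{\theta}^p$, i.e.\ of $M_{p,+}+(-1)^pM_{p,-}$ of the projected angular measure. They say nothing about the other half $\int s(\sca{\sigma}{\theta})$, where $s(t)=\abs{t}^p$ for odd $p$ and $s(t)=\abs{t}^{p-1}t$ for even $p$. Your example already fails for $p=1$, $d=2$, $h=\cos2\phi$. This $h$ is orthogonal to $1,\cos\phi,\sin\phi$, yet $\int_{-\pi/2}^{\pi/2}\cos\psi\cos2\psi\,\dd\psi=2/3\neq0$. Hence $C_1^+[h]\not\equiv0$, and $\SlicedWass_1$ stays bounded below.

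The paper fixes this by a parity argument. Take $\mu$ uniform on $\Sph^{d-1}$ and $\Pi$ the $L^2(\mu)$-projection onto polynomials of degree $\leq p$. Then $h=x_1^{p+2}-\Pi(x_1^{p+2})$ is orthogonal to $\sca{\cdot}{\theta}^p$; this is where $p\in\mathbb{N}$ is used. Moreover $h(-x)=(-1)^ph(x)$, while $s(\sca{\cdot}{\theta})$ has parity $(-1)^{p+1}$. Hence $C_p^+[h\mu]=0$, and $\nu=(1+\e h)\mu$ is the required second angular measure. In your two-dimensional setting, $\cos(m\phi)$ works exactly when $m>p$ and $m\equiv p\pmod 2$.
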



\subsection{Kernel mean embeddings} We finally turn to the third embedding considered in this article.
Let $K:\R^d\times\R^d\rightarrow \R$ be a symmetric positive semi-definite  kernel.
The reproducing kernel Hilbert space associated to $K$ by the Moore-Aronszajn theorem is denoted  by $\mathcal{H}_K$, and its scalar product by $\langle \cdot,\cdot\rangle_{\mathcal{H}_K}$. Recall that this means that for any $f\in\mathcal{H}_K$ we have $f(x)=\langle f,K(x,\cdot)\rangle_{\mathcal{H}_K}$.
The associated Kernel Mean Embedding (KME) is the linear mapping $\mu\mapsto K_\mu\in\mathcal{H}_K$ where $\mu$ belongs to the set $\mathcal{S}_K$ of signed finite measures such that $\int_{\R^d}\sqrt{K(x,x)}\dd|\mu|(x)<+\infty$, and $K_\mu$ satisfies 
$$
\forall f\in \mathcal{H}_K,\qquad \int_{\R^d}f\dd\mu=\langle f,K_\mu\rangle_{\mathcal{H}_K}.
$$
The distance $d_K(\mu,\nu)$ on $\mathcal{S}_K$ defined by
$$
d_K(\mu,\nu)=\|K_\mu-K_\nu\|_{\mathcal{H}_K}
$$
is called Maximum Mean Discrepancy (MMD), see \cite{gretton1,gretton2}.

Here we focus on the particular case where $K=\kappa_\beta$ with 
\begin{equation}\label{e:kalpha}
\kappa_\beta(x,y)=|x|^{2\beta}+|y|^{2\beta}-|x-y|^{2\beta}
\end{equation}
where $\beta\in(0,1)$. Notice that $\int_{\R^d}\sqrt{\kappa_\beta(x,x)}\dd\mu(x)=\sqrt{2}\int_{\R^d}|x|^\beta \dd\mu(x)$, hence $\mathcal{M}_1\subset\mathcal{S}_{\kappa,\beta}$. The MMDs associated to the kernels $\kappa_\beta$ are the so-called energy distances, first introduced by Székely (see \cite{szekely}, \cite{sejdinovic}), and equivalently defined via the Riesz kernels $-|x-y|^\beta$ (but \eqref{e:kalpha} has the advantage of being positive semi-definite, while Riesz kernels are only conditionally positive).  The reason we focus on the specific kernels $\kappa_\beta$ is that the associated kernel mean embedding compares particularly well to the $W_q$ distances. Indeed, the following result follows essentially from \cite[Proposition 17 and Lemma 20]{modeste} (see Section \ref{s:mmd} for details): 

\begin{theorem}\label{t:MMDW1}
Let $\beta\in(0,1)$ and $p\geq 1$. Consider the kernel mean embedding $\mu\mapsto K_\mu$ associated to the kernel $\kappa_\beta$. Then for any $R>0$ and $q>p$, there exists $C>0$ such that
\begin{equation}\label{e:kalphasmallerW1}
\forall \mu,\nu\in\mathcal{M}_{1,R}, \qquad \|K_\mu-K_\nu\|_{\mathcal{H}_{\kappa_\beta}}\leq  CW_1(\mu,\nu)^{\frac{\beta}{2}}
\end{equation}
and 
\begin{equation}\label{e:W1smallerkalpha}
\forall \mu,\nu\in\mathcal{M}_{q,R}, \qquad W_p(\mu,\nu)\leq C\|K_\mu-K_\nu\|_{\mathcal{H}_{\kappa_\beta}}^\delta
\end{equation}
where $\delta=\frac{q-p}{pq(d+\beta+1)-p\beta}$.
As a consequence, the mapping $\mu\mapsto K_\mu$ is a bi-Hölder embedding of $\mathcal{M}_{q,R}$ into the Hilbert space $\mathcal{H}_{\kappa_\beta}$.
\end{theorem}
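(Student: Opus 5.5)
We prove the two inequalities separately, working throughout with the Riesz-type formula for the MMD of $\kappa_\beta$.

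\emph{Upper bound \eqref{e:kalphasmallerW1}.} For measures in $\Mom_{1,R}$ with equal mass, expanding $\|K_\mu-K_\nu\|^2$ gives
$$\|K_\mu-K_\nu\|_{\mathcal H_{\kappa_\beta}}^2=-\iint |x-y|^{2\beta}\,\dd(\mu-\nu)(x)\,\dd(\mu-\nu)(y).$$
The $|x|^{2\beta}$ and $|y|^{2\beta}$ terms integrate to zero against $\mu-\nu$ in the other variable. Write this as $\int g\,\dd(\mu-\nu)$ with $g(x)=-\int|x-y|^{2\beta}\dd(\mu-\nu)(y)$. Let $\pi$ be an optimal $W_1$ coupling of $(\mu,\nu)$. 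Using $\big||a|^{2\beta}-|b|^{2\beta}\big|\le |a-b|^{2\beta}$ when $2\beta\le1$, or a mean value bound $\lesssim |a-b|(|a|+|b|)^{2\beta-1}$ when $2\beta>1$, followed by Hölder and the first moment bound $R$, we get
$$|g(x)-g(x')|\le C\,|x-x'|^{\min(1,2\beta)}\cdot\big(\text{moment factors}\big).$$
Then $\int g\,\dd(\mu-\nu)\le \int |g(x)-g(y)|\dd\pi$. Controlling the increments once more via the coupling in the inner variable gives $\iint |x-x'|^{\beta}|y-y'|^{\beta}\dd\pi\,\dd\pi$. This step uses a symmetric splitting $|a|^{2\beta}$-type estimate; the cleanest version is \cite[Lemma 20]{modeste}. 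By Jensen it is at most $W_1^{2\beta}$ up to moment constants, which gives $\|K_\mu-K_\nu\|\le C W_1^{\beta/2}$ after square roots. This exponent need not be sharp; it is the stated one. For this direction I would simply invoke \cite{modeste}.

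\emph{Lower bound \eqref{e:W1smallerkalpha}.} We argue by Fourier analysis. For centered differences, the energy distance has the representation
$$\|K_\mu-K_\nu\|^2=c_{d,\beta}\int_{\R^d}\frac{|\hat\mu(\xi)-\hat\nu(\xi)|^2}{|\xi|^{d+2\beta}}\,\dd\xi.$$
We then proceed in four steps.
\begin{enumerate}
\item \emph{Smoothing.} Convolve with a Gaussian $\gamma_\sigma$. This gives $W_1(\mu,\mu*\gamma_\sigma)\le C\sigma$.
\item \emph{Truncation in frequency.} Bound $W_1(\mu*\gamma_\sigma,\nu*\gamma_\sigma)\le \sup_{\Lip f\le1}\int f\,\dd(\mu-\nu)*\gamma_\sigma$. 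Localizing $f$ to $B(0,S)$ costs a tail term $\lesssim R^q S^{1-q}$. Writing the localized part in Fourier and applying Cauchy--Schwarz with weight $|\xi|^{d+2\beta}e^{-\sigma^2|\xi|^2}$ yields a bound of the form
$$C\,S^{a}\,\sigma^{-b}\,\|K_\mu-K_\nu\|,$$
with explicit powers. The cutoff $\chi_S f$ has $\|\chi_S f\|_{L^2}\lesssim S^{1+d/2}$, and the $\beta$ weight contributes $\sigma^{-(d/2+\beta+1)}$-type factors.
\item \emph{Optimization.} Optimize over $\sigma$ and $S$ to get $W_1\le C\,\mathrm{MMD}^{\delta_1}$.
\item \emph{From $W_1$ to $W_p$.} Use the standard interpolation on $\Mom_{q,R}$: truncate at radius $S$, bound $W_p^p\le S^{p-1}W_1+C R^q S^{p-q}$, and optimize to get $W_p\le C\,W_1^{(q-p)/(p(q-1))}$.
\end{enumerate}
Combining the exponents should produce $\delta=\frac{q-p}{pq(d+\beta+1)-p\beta}$. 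This essentially reproduces \cite[Proposition 17]{modeste}.

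\emph{Main obstacle.} The hardest part is the exponent bookkeeping in the Fourier step (2) and in the optimizations (3)–(4). In particular, one has to check that the truncation is done directly at the level of $W_p$, rather than passing through $W_1$, so that the denominator takes the form $pq(d+\beta+1)-p\beta$. I would first try truncating the test functions for the dual $W_1$ directly. If the exponent comes out slightly worse, I would instead perform a single joint optimization of $(\sigma,S)$ in $W_p$, using $W_p^p\lesssim S^{p-1}\|(\mu-\nu)*\gamma_\sigma\|_{\mathrm{TV}(B_S)}\cdot S+\sigma^p+R^qS^{p-q}$.
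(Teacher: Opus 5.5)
Your plan matches the paper's proof: reduce to $p=1$ by moment interpolation on $\Mom_{q,R}$ (your step 4 is Lemma \ref{l:wassboundedmoments} with exponent $\frac{q-p}{p(q-1)}$), then take the $W_1$-level lower bound from \cite[Proposition 17]{modeste} and the upper bound from \cite[Lemma 20]{modeste}, namely $\|K_\mu-K_\nu\|_{\mathcal{H}_{\kappa_\beta}}^2\leq (M_\beta(\mu)+M_\beta(\nu))W_\beta(\mu,\nu)\leq 2(1+R)W_1(\mu,\nu)^{\beta}$. Your ``main obstacle'' is not an obstacle, because $\frac{q-1}{q(d+\beta+1)-\beta}\cdot\frac{q-p}{p(q-1)}=\delta$ exactly, so going through $W_1$ gives the stated exponent; also, in your upper-bound sketch $\sqrt{W_1^{2\beta}}=W_1^{\beta}$ rather than $W_1^{\beta/2}$, which is harmless because $W_1\leq 2R$ on $\Mom_{1,R}$.
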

The exponent in this result depends (badly) on the dimension $d$, and this cannot be avoided: when $\nu=\mu_n$ is an $n$-point random empirical approximation of the uniform probability density $\mu$ over a nice bounded open set, then $W_q(\mu_n,\mu)\asymp n^{-1/d}$, whereas $\|K_{\mu_n}-K_\mu\|_{\mathcal{H}_{\kappa_\beta}}\asymp n^{-\frac12}$, see for instance \cite[Eq. (1.7)]{colasanto} (notice that in this paper the kernel is $-|x-y|^\beta$, but this yields the same MMD distance). See \cite[Lemmas 9 and 41]{vayer} for an argument working for any bounded kernel.

Finally, we observe that the above kernel mean embedding is not uniform over the set of measures with bounded $p$-th moment:
\begin{proposition}\label{p:nouniformMMD}
Let $\beta\in (0,1)$, $R>0$ and $p\geq 1$. Then there exist $\mu\in\mathcal{M}_{p,R}$ and $\mu_n\in\mathcal{M}_{p,R}$ for $n\in\N$ such that 
$$
\liminf_{n\rightarrow +\infty}W_p(\mu_n,\mu)>0, \qquad \lim_{n\rightarrow +\infty}\|K_{\mu_n}-K_\mu\|_{\mathcal{H}_{\kappa_\beta}}=0.
$$
As a consequence, the kernel mean embedding $\mathcal{M}_p\ni \mu\mapsto K_\mu$ associated to the kernel $\kappa_\beta$ is not uniform over $\mathcal{M}_{p,R}$. 
\end{proposition}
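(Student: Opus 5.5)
We construct an explicit sequence in which a small amount of mass escapes to infinity. The escaping mass keeps a fixed share of the $p$-th moment, so it keeps $W_p$ bounded below. But its contribution to the energy distance vanishes, because $\kappa_\beta$ only sees $|x|^{2\beta}$-type growth with $2\beta<2$, and in fact only $|x|^\beta$ at first order.

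Take $\mu=\delta_0$, which lies in $\mathcal{M}_{p,R}$. Set
$$\mu_n=(1-\varepsilon_n)\delta_0+\varepsilon_n\delta_{x_n}, \qquad |x_n|=t_n,\quad \varepsilon_n t_n^p=R^p,$$
with $t_n\to+\infty$, so $\varepsilon_n\to 0$. Then $M_p(\mu_n)=R^p$, hence $\mu_n\in\mathcal{M}_{p,R}$.

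\textbf{Lower bound on $W_p$.} Since $\mu$ is a Dirac mass, the only coupling is the product coupling, so
$$W_p(\mu_n,\delta_0)^p=\int|x|^p\,\dd\mu_n=R^p.$$
Thus $\liminf_n W_p(\mu_n,\mu)=R>0$.

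\textbf{The MMD tends to zero.} Expand by bilinearity:
$$\|K_{\mu_n}-K_\mu\|^2_{\mathcal H_{\kappa_\beta}}=\iint\kappa_\beta\,\dd(\mu_n-\mu)\,\dd(\mu_n-\mu).$$
Here $\mu_n-\mu=\varepsilon_n(\delta_{x_n}-\delta_0)$. Using $\kappa_\beta(0,\cdot)=0$, this equals
$$\varepsilon_n^2\,\kappa_\beta(x_n,x_n)=2\varepsilon_n^2t_n^{2\beta}.$$
Substituting $\varepsilon_n=R^pt_n^{-p}$ gives $2R^{2p}t_n^{2\beta-2p}$. This tends to $0$ because $\beta<1\le p$.

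Non-uniformity then follows directly. Along the pairs $(\mu_n,\mu)$, the MMD distance goes to zero while $W_p$ stays bounded below. So the inverse of the kernel mean embedding is not uniformly continuous on its image of $\mathcal{M}_{p,R}$.

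There is no real obstacle. The only point to verify is the admissibility condition $\mathcal{M}_{p,R}\subset\mathcal{S}_{\kappa_\beta}$, which the paper has already noted. The computation itself is an exact identity for Dirac masses, with no estimates needed.
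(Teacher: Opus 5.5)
Your proof is correct and follows essentially the same approach as the paper. The paper also takes $\mu=\delta_0$, sends mass $n^{-p}$ to distance $n$ (split symmetrically between $\pm ne$, with $R=1$), and uses $\kappa_\beta(0,\cdot)=0$ to compute the squared MMD exactly as a constant times $n^{2\beta-2p}\to 0$, while $W_p(\mu_n,\delta_0)=1$; your single-atom version with $\varepsilon_n t_n^p=R^p$ is a slightly simpler variant that handles general $R$ directly.
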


Let us conclude with a few bibliographical remarks. For a result related to our Theorem \ref{t:MMDW1} in the case $\beta=1/2$, see also \cite[Theorems 6 and 7]{toscani}. Note however that \cite[Theorem 7]{toscani} has a condition on second moments, not on first moments. Theorem 2 in \cite{hertrich} is also about the case $\beta=1/2$,
but it only applies to measures supported over a ball. Theorems 14 and 15 in \cite{vayer} provide comparisons between $W_q$ and MMD distances, but only for a class of probability densities which are regular enough. We also mention the paper \cite{carrillo}, notably Section 2.4, which provides comparison of $W_1$ with many other norms, but no Hilbertian norm. The survey \cite{toscani} presents thoroughly Wasserstein metrics, Fourier based metrics, and energy distances, as well as some of their relations.

\subsection*{Acknowledgments} We thank Titouan Vayer for an interesting discussion regarding MMD distances. C.L. acknowledges funding from PEPS JCJC. The authors
acknowledge the support of the Agence nationale de la recherche, through the PEPR PDE-AI project (ANR-23-PEIA-0004).

\subsection*{AI disclosure} The only use of AI tools in the preparation of this paper was in the search for the counterexample given in Section \ref{s:nonequivalence}. All other content, as well as the writing itself, is due to the authors, who take full responsibility for the content of the article.

\section{Proofs for embedding via linearized optimal transport}

In this section, we prove successively Theorems \ref{t:relativecompact}, \ref{t:p>2} and \ref{t:LOTq}, and Proposition \ref{p:turningpointp}.

\subsection{Proof of \Cref{t:relativecompact}.} Let $\mu_0,\mu_1\in \Mom$ and $T_i = T_{\rho\to\mu_i, 2}$ be the quadratic optimal transport map from $\rho$ to $\mu_i$. 
Given a radius $S>0$, we denote $E_{S,i} = \{ x \in\Rsp^d \mid \abs{T_i(x)}\leq S\}$ and $E_S = E_{S,0}\cap E_{S,1}$. We also denote $\rho_S = \mathbf{1}_{E_S} \rho$ the restriction of $\rho$ to the set $E_S$ and $\rho_S^\perp = \rho - \rho_S$ its complement. The squared distance between $T_0$ and $T_1$ can be decomposed as:
$$ \nr{T_0-T_1}_{\LL^2(\rho)}^2 = \nr{T_0-T_1}_{\LL^2(\rho_S)}^2+\nr{T_0-T_1}_{\LL^2(\rho_S^\perp)}^2 =: A+B.$$
The term $B$ in this sum can be controlled using the definition of $f_\Mom$ and the fact that $\mu_i = T_{i\#}\rho$:
\begin{align*} 
B = \nr{T_0-T_1}_{\LL^2(\rho_S^\perp)}^2
&\leq 2 \sum_{i\in\{0,1\}} \nr{T_i}^2_{\LL^2(\rho_S^\perp)} \\
&\leq 2  \sum_{i\in\{0,1\}} \int_{E_{S,i}^c} \abs{T_i(x)}^2 \dd\rho(x) \\
&= 2 \sum_{i\in\{0,1\}} \int_{T_i(E_{S,i}^c)} \abs{y}^2 \dd\mu_i(y) 
\leq 2 f_\Mom(S)
\end{align*}
where $E_{S,i}^c=\R^d\setminus E_{S,i}$.
The upper bound on the term $A$ relies on stability results from \cite{merigot}. By Brenier's theorem, for every $i\in\{0,1\}$ there exists a lower semi-continuous convex functions $\phi_i: \Rsp^d\to \Rsp\cup\{+\infty\}$ such that $T_i = \nabla \phi_i$. We may also assume that $\phi_i \equiv +\infty$ outside of $\spt(\rho)$. We then define $\psi_i = \phi_i^*$, which implies that $\psi_i$ is $L$-Lipschitz with $L=\max_{x\in\spt(\rho)}\abs{x}$. Define $\psi_{i,S} = \left.\psi_{i}\right\vert_S$ the restriction of $\psi_i$ to $B(0,S)$, extended by $+\infty$ outside of $B(0,S)$. The Fenchel-Young equality implies that on $E_{S,i},$ $\psi_{i,S}^* = \psi_i$ and $\nabla \psi_{i,S}^* = \nabla\psi_i^*$. By \cite[Theorem 2.2]{merigot}, there is a constant $C$ depending on $d$ and $\rho$ (but not $S$) such that 
$$ A^2 = \Vert{T_{\mu_0} - T_{\mu_1}}\Vert^4_{L^2(\rho_S)} \leq CS^3 B_{\rho_S}(\psi_0,\psi_1),$$
where $B_\rho$ is the Bregman-Kantorovich divergence with respect to $\rho_S$. We recall that thanks to Lemma A.1 in \cite{merigot}, this divergence can be written using the Fenchel-Young loss $G_{\phi}(x,y) := \phi(x) + \phi^*(y) - \langle{x}\vert{y}\rangle$ :  
\begin{equation}\label{e:bregmankantofenchel}
\begin{aligned}
B_{\rho_S}(\psi_{0,S},\psi_{1,S}) &= \int_{E_S} G_{\psi_{1,S}^*}(x, \nabla \psi^*_{0,S}(x)) d\rho_S(x)\\
&= \int_{E_S} G_{\phi_{1}}(x, \nabla \phi_0(x)) d\rho_S(x)\\
&\leq \int G_{\phi_1}(x,\nabla\phi_0(x) \dd\rho(x),
\end{aligned}
\end{equation}
where the last inequality uses the non-negativity of $G_{\phi_1}$. Finally, using that the functions $\psi_i$ are $L$-Lipschitz, and Kantorovich-Rubinstein duality, we obtain
$$ B_{\rho}(\psi_0,\psi_1)+B_\rho(\psi_1,\psi_0) = \int (\psi_0 -
\psi_1)d(\mu_1 - \mu_0) \leq C W_1(\mu_0,\mu_1),$$
where $C$ still depends only on $\rho$ and $d$. Since both Bregman-Kantorovich divergences are non-negative, we finally obtain
$$A \leq CS^{\frac{3}{2}} \Wass_1(\mu_0,\nu_1)^{\frac{1}{2}}$$ 
Summing the bounds on $A$ and $B$, we obtain
$$ \nr{T_0-T_1}_{\LL^2(\rho)}^2 \leq C(S^{\frac{3}{2}} \Wass_1(\mu_0,\mu_1)^{\frac{1}{2}} + f_\Mom(S)).$$
Since the above inequality holds for any $S>0$, we may take an infimum with respect to $S>0$ in the right-hand side and taking a square root we recover the definition of $\omega_\Mom$.\\
Finally, if $\Mom$ is relatively compact in $\Prob_2(\Rsp^d)$, then $\lim_{S\to +\infty} f_\Mom(S) = 0$. Thus, for any $\varepsilon>0$, there exists $S_0$ such that $f_\Mom(S_0) \leq \varepsilon^2$. Then,
$$ \lim\sup_{r\to 0} \omega_\Mom(r) \leq C \sqrt{\limsup_{r\to 0} S_0^{\frac32} r^{\frac12} + f_\Mom(S_0)} \leq \varepsilon.$$
Since the above inequality holds for all $\varepsilon\geq 0$, we obtain that $\lim_{r\to 0}\omega_\Mom(r) = 0$, establishing that $\omega_\Mom$ is a valid continuity modulus.

\subsection{Proof of Theorem \ref{t:p>2}} Let $\mu \in \Mom:=\Mom_{q,R}$ for $q>2$. Then, for every $S>0$,
\begin{equation*} 
\int_{B(0,S)^c} \abs{x}^2 \dd \mu(x) = \int_{B(0,S)^c} \abs{x}^{q+(2-q)} \dd \mu(x)  \leq S^{2-q} M_q(\mu) \leq S^{2-q} R^q.
\end{equation*}
Since this holds for any $\mu\in\Mom$, we obtain $f_\Mom(S) \leq S^{2-q} R^q$. Then, 
$$ \omega_\Mom(r) = C \sqrt{\inf_{S>0} S^{3/2} r^{1/2} + f_\Mom(S)} \leq C \sqrt{\inf_{S>0} S^{3/2} r^{1/2}+ S^{2-q} R^q}.$$
Choosing $S$ so that the two terms of the sum have the same order, i.e. $S = (R^{2q}/r)^{1/(2q-1)}$  gives the following explicit continuity modulus
\begin{equation*}
    \omega_\Mom(r) \leq C R^{\frac{3q}{2(2q-1)}} r^{\frac{q-2}{2(2q-1)}}
\end{equation*} 
which proves \eqref{e:2q-1}.

Regarding the sharpness of the exponent in \eqref{e:2q-1}, we proceed as follows. 
Let $\rho = \mathrm{Leb}_{[0,1]^2}$, let $q>2$ and for some parameters $r_n, m_n, \theta_n > 0$ define
$$ \mu_n = (1-m_n)\delta_0 + m_n \delta_{(r_n,1/2)}, \quad \nu_n = R_{\theta_n\#} \mu_n.$$
We choose $m_n$ such that $M_q(\mu_n) = m_n r_n^q = 1$, i.e. $m_n = r_n^{-q}$. The $1$-Wasserstein distance between $\mu_n$ and $\nu_n$ can be bounded using the rotation $R_\theta$ as a transport map, giving
$$ \Wass_1(\mu_n,\nu_n) \leq m_n \cdot (\theta_n r_n) = \theta_n r_n^{1-q}$$
The quadratic optimal transport plan $T_{\mu_n}$ between $\rho$ and $\mu_n$ maps $[0,1-m_n]\times [0,1]$ to $0$ and $[1-m_n,1]\times[0,1]$ to $(r_n,1/2)$. For $\nu_n$, if $\theta_n \leq m_n$, the optimal transport map has a similar structure, but the vertical segment $S_n := \{m_n\}\times [0,1]$ is also rotated by $\theta_n$ around the point $(m_n,\frac{1}{2})$. More precisely,
$$
T_{\nu_n}(x)=\begin{cases} 0 &\text{ if } \sca{x}{e_n}\leq \sca{(m_n,\frac12)}{e_n}\\ 
(r_n,1/2) &\text{ if } \sca{x}{e_n}>\sca{(m_n,\frac12)}{e_n}\end{cases}
$$
where $e_n=(\cos\theta_n,\sin\theta_n)$.
The $\LL^2$ distance between these maps is then 
$$ \nr{T_{\mu_n} - T_{\nu_n}}_{\LL^2(\rho)} \geq (\theta_n r_n^2)^{1/2} = \theta_n^{1/2} r_n.$$
Taking $\theta_n = m_n = r_n^{-q}$ in the previous estimates, we get
$$\nr{T_{\mu_n} - T_{\nu_n}}_{\LL^2(\rho)} \geq r_n^{1-\frac{q}{2}} \geq \Wass_1(\mu_n,\nu_n)^{\frac{\frac{q}{2}-1}{2q-1}} = \Wass_1(\mu_n,\nu_n)^\frac{q-2}{2(2q-1)} = \Wass_1(\mu_n,\nu_n)^{\frac{1}{4} - \frac{3}{4(2q-1)}},$$
thus showing that the exponent in \eqref{e:2q-1} is optimal.

\begin{remark}\label{r:qcostnotdone}
It is likely that Theorem \ref{t:p>2} generalizes to optimal transport with respect to $p$-cost on $\R^d$ for $p>1$: one would obtain for $q>p$ and $R>0$ a quantitative stability estimate, uniform over $\mathcal{M}_{q,R}$, and with dimension-free exponent (depending only on $p,q$). But proving this would probably require first generalizing \cite{merigot} to $p$-costs, which is not yet done.
\end{remark}

\subsection{Proof of Theorem \ref{t:LOTq}}\label{s:proofLOTq}

Let $\rho$ be a probability density with bounded and convex support $\Omega$. Assume $\rho$ is bounded above and below on $\Omega$. Denote 
\begin{equation}\label{e:Omegaeta}
\Omega_\eta = \{x\in\Omega\mid \dd(x,\partial\Omega)\geq \eta\}.
\end{equation}
Given  $\mu\in\Prob_p(\Rsp^d)$, we denote by $\phi_{\rho\to\mu,p}$ the $c_p$-concave Kantorovich potential on the side of $\rho$ that has zero $\rho$-mean and  $\psi_{\rho\to\mu, p}$ the $c_p$-transform of $\phi_{\rho\to\mu,p}$, defined by 
\begin{equation}\label{e:cptransform}
\psi_{\rho\to\mu, p}(y)=\inf_{x\in \Omega} |x-y|^p-\phi_{\rho\rightarrow\mu, p}(y).
\end{equation}

\begin{lemma}\label{l:boundedtransporteta}
Let $q>p>1$. There exists $C>0$ such that for any $\eta>0$ and $R\geq 1$,
$$\forall \mu\in\mathcal{M}_{q,R},\qquad \|T_{\rho\rightarrow \mu,p}\|_{L^\infty(\Omega_\eta)}\leq C R^{\frac{p+1}{p-1}}\eta^{-\frac{d}{q}\frac{p+1}{p-1}-\frac{1}{p-1}}.$$
\end{lemma}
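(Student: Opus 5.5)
The plan is to avoid Kantorovich potentials and argue directly with $c_p$-cyclical monotonicity of the optimal plan $(\mathrm{Id},T)_\#\rho$, where $T=T_{\rho\to\mu,p}$. The moment bound will be used to find many points near $x_0$ whose images are not far from the origin. We will in fact aim at the sharper bound $|T(x_0)|\leq C(R\eta^{-d/q}+1)$. This implies the stated estimate, because we may assume $\eta\leq \mathrm{diam}(\Omega)$ (otherwise $\Omega_\eta=\emptyset$), $R\geq 1$, and the stated exponents of $R$ and $\eta^{-1}$ are larger than $1$ and $d/q$ respectively.

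\textbf{Step 1 (many points with small image).} Fix $x_0\in\Omega_\eta$ in the full-measure set where cyclical monotonicity holds pointwise for the graph of $T$. Put $y_0=T(x_0)$ and assume $|x_0-y_0|\geq 4\eta$ (otherwise $|y_0|\leq \mathrm{diam}$-type bound trivially). Let $e=(y_0-x_0)/|y_0-x_0|$. Consider the truncated cone
$$D=\{x_0+te'\mid t\in[\eta/2,\eta],\ |e'|=1,\ \sca{e'}{e}\geq 1/2\}\subseteq B(x_0,\eta)\subseteq\Omega.$$
Since the density of $\rho$ is bounded below, $\rho(D)\geq c_0\eta^d$. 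By Markov's inequality,
$$\rho(\{x \mid |T(x)|>S\})=\mu(\{|y|>S\})\leq R^qS^{-q}.$$
Choosing $S=(2R^q/(c_0\eta^d))^{1/q}=CR\eta^{-d/q}$, we find $x\in D$ (a good point) with $|T(x)|\leq S$.

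\textbf{Step 2 (monotonicity inequality).} Two-point $c_p$-cyclical monotonicity gives
$$|x_0-y_0|^p-|x-y_0|^p\leq |x_0-T(x)|^p-|x-T(x)|^p.$$
The right-hand side is at most $p(S+\mathrm{diam}\,\Omega)^{p-1}|x-x_0|\leq C(S+1)^{p-1}\eta$, by the mean value theorem, since both $x_0,x\in\Omega$ and $|T(x)|\leq S$.

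For the left-hand side, write $x=x_0+te'$ with $t\in[\eta/2,\eta]$ and $\sca{e'}{e}\geq 1/2$. Expanding gives
$$|x-y_0|^2=|x_0-y_0|^2-2t\sca{e'}{y_0-x_0}+t^2\leq |x_0-y_0|^2-t|x_0-y_0|+t^2.$$
Since $|x_0-y_0|\geq 4\eta\geq 4t$, this yields $|x_0-y_0|-|x-y_0|\geq ct\geq c\eta$. Moreover $|x-y_0|\geq |x_0-y_0|/2$. Hence, by the mean value theorem again,
$$|x_0-y_0|^p-|x-y_0|^p\geq c\,\eta\,|x_0-y_0|^{p-1}.$$

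\textbf{Step 3 (conclusion).} Combining the two bounds and dividing by $\eta$ gives $|x_0-y_0|^{p-1}\leq C(S+1)^{p-1}$. Hence
$$|T(x_0)|\leq C(S+1)\leq CR\eta^{-d/q},$$
using $R\geq 1$ and $\eta$ bounded. Taking the essential supremum over $x_0\in\Omega_\eta$ then finishes the proof, after the comparison of exponents mentioned at the start.

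\textbf{Main obstacle.} The delicate point is Step 1 combined with the geometry of Step 2. The good point $x$ must lie in a region where $x$ is definitely closer to $y_0$ than $x_0$ by an amount of order $\eta$, while that region still carries $\rho$-mass of order $\eta^d$ so that Markov's inequality applies. The cone $D$ inside $B(x_0,\eta)\subseteq\Omega$ achieves both. A secondary technical point is the a.e. issue: cyclical monotonicity holds on the support of the plan, so $x_0$ and $x$ must be taken in a full-measure set. This costs nothing, since we only need positive mass of good points in $D$.
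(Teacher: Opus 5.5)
Your proof is correct, and it proves more than the lemma: $\nr{T_{\rho\to\mu,p}}_{\LL^\infty(\Omega_\eta)}\leq C(1+R\eta^{-d/q})$. The stated estimate then follows by the exponent comparison you give.

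The paper uses the same truncated cone and the same two-point $c_p$-monotonicity inequality, but runs the argument in the opposite direction. Assuming $\abs{T(x)}$ is large, it shows that \emph{every} $y$ in the cone satisfies $\abs{T(y)}\gtrsim\abs{T(x)}^{(p-1)/p}\eta^{1/p}$. It gets this by simply discarding the term $\abs{T(y)-y}^p$ on the right-hand side of the monotonicity inequality. It then compares $\eta^d\inf_{\mathcal{C}_x}\abs{T}^q$ with $M_q(\mu)\leq R^q$. Dropping that term is what produces the exponents $\frac{p+1}{p-1}$ and $\frac{d}{q}\frac{p+1}{p-1}+\frac{1}{p-1}$.

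You instead use Markov's inequality to select one point $x$ of the cone with $\abs{T(x)}\leq S\sim R\eta^{-d/q}$. You then keep the cancellation in $\abs{x_0-T(x)}^p-\abs{x-T(x)}^p$ through the mean value theorem, which gives the linear bound $\abs{x_0-T(x_0)}\lesssim 1+S$. Equivalently, if the paper had used the same mean value bound instead of discarding the term, it would get $\abs{T(y)}\gtrsim\abs{T(x)}$ on the whole cone and hence your linear estimate.

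So your route loses nothing and yields much better exponents. Inserting $R_\eta\sim R\eta^{-d/q}$ into the proof of Theorem~\ref{t:LOTq} would improve the Hölder exponent obtained there.

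One cosmetic correction: in the mean value bound and in the case $\abs{x_0-y_0}<4\eta$, use $\sup_{z\in\Omega}\abs{z}$ rather than $\mathrm{diam}\,\Omega$, since $0$ need not lie in $\Omega$. This only affects constants.
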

\begin{proof}
Let $T=T_{\rho\rightarrow \mu,p}$. Let $x\in\Omega$, let $\eta={\rm dist}(x,\partial\Omega)$ and $\mathcal{C}_x$, the intersection of a small annulus with a cone of apex $x$:
$$
\mathcal{C}_x=\left\{y\in B(x,\eta)\setminus B(x,\eta/2)\mid (y-x)\cdot (T(x)-x)\geq \frac12 |y-x||T(x)-x|\right\}\subset \Omega.
$$ 
Let $A=1+\sup_{x\in \Omega}|x|$ and let 
$$
B:=|T(x)|A^{-\frac{p}{p+1}}\eta^{\frac{1}{p-1}}.
$$
We will assume below that $B$ is large (depending on $A,p$). 
Let 
$$
\alpha= T(x)-x, \qquad \beta=T(y)-T(x), \qquad \gamma=y-x.
$$
Cyclical monotonicity yields
$$
|\alpha|^p-|\alpha-\gamma|^p\leq |\alpha+\beta|^p - |\alpha+\beta-\gamma|^p.
$$
We have 
\begin{align*}
|\alpha|^p-|\alpha-\gamma|^p=|\alpha|^p-|\alpha|^p\left(1-2\frac{\alpha}{|\alpha|^2}\cdot \gamma +\frac{|\gamma|^2}{|\alpha|^2}\right)^{p/2}\geq  \frac{p}{2}|\alpha|^{p-2} \alpha\cdot \gamma.
\end{align*}
for $|\alpha|$ large enough (which is the case for $B$ large enough).
We deduce that for any $y\in\mathcal{C}_x$ there holds
$
|\alpha+\beta|^p \geq \frac{p}{2}|\alpha|^{p-1}|\gamma|
$
hence $$|T(y)-x|^q\geq \frac{p}{2}|T(x)-x|^{p-1}|y-x|.$$
If $B$ is large enough then $|T(x)-x|\geq \frac12|T(x)|$ and we get for any $y\in\mathcal{C}_x$
\begin{equation}\label{e:Ty}
|T(y)|\geq \frac{p^{\frac1p}}{2}|T(x)|^{\frac{p-1}{p}}\eta^{\frac1p}-|x|\geq  \frac{p^{\frac1p}}{2} B^{\frac{p-1}{p}}A^{\frac{p-1}{p+1}}-A\geq  \frac{p^{\frac1p}}{4}  B^{\frac{p-1}{p}} A^{\frac{p-1}{p+1}}.
\end{equation}
Since $\rho(C)\geq c\eta^d$ for some universal $c>0$, we deduce that $c\eta^d A^{q\frac{p-1}{p+1}}B^{q\frac{p-1}{p+1}}\leq R^q$. We conclude that $B\lesssim R^{\frac{p+1}{p-1}}\eta^{-\frac{d}{q}\frac{p+1}{p-1}}$ and finally $|T(x)|\lesssim R^{\frac{p+1}{p-1}}\eta^{-\frac{d}{q}\frac{p+1}{p-1}-\frac{1}{p-1}}$ where the hidden constants depend only on $A,q,p$.
\end{proof}

\begin{lemma}\label{l:boundW1-qmoment}
Let $q>p>1$ and $R\geq1$. Then there exists a constant $C=C(p,q,\rho)>0$ such that, for every $\mu,\nu\in\Mom_{q,R}$,
$$\int (\psi_{\rho\to\mu,p} - \psi_{\rho\to\nu,p})\, \dd(\mu-\nu)\leq C R^{\frac{q(p-1)}{q-1}} \Wass_1(\mu,\nu)^{\frac{q-p}{q-1}}$$
\end{lemma}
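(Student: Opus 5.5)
The plan is to control the growth of the $c_p$-transform potentials and then truncate. Since $\Omega$ is bounded, say $\Omega\subseteq B(0,A)$, the potential $\psi=\psi_{\rho\to\mu,p}$ is an infimum over $x\in\Omega$ of the functions $y\mapsto |x-y|^p-\phi(x)$.

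\emph{Step 1: local Lipschitz bound.} Each function $y\mapsto|x-y|^p$ with $x\in\Omega$ is Lipschitz on $B(0,S)$ with constant $p(S+A)^{p-1}$. An infimum of such functions inherits this bound, so $\psi$ is Lipschitz on $B(0,S)$ with constant $\lesssim (1+S)^{p-1}$. Over all of $\mathbb R^d$ this gives the global estimate $|\nabla\psi(y)|\lesssim (1+|y|)^{p-1}$.

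\emph{Step 2: normalization.} We need $\psi$ to be bounded by $C(1+|y|)^p$ with a constant independent of $\mu$. The normalization $\int\phi\,d\rho=0$ should give this, but it has to be checked. The $c_p$-concavity of $\phi$ gives the symmetric growth bound $|\nabla\phi|\lesssim (A+|T|)^{p-1}$. The difference $\psi-\psi(0)$ then contributes nothing to $\int\psi\,d(\mu-\nu)$, because $\mu-\nu$ has zero mass. So only the growth of $\psi$, not its value at $0$, actually matters, and the additive constant is harmless.

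\emph{Step 3: splitting.} Set $f=\psi_{\rho\to\mu,p}-\psi_{\rho\to\nu,p}$, which by Step 1 satisfies $|\nabla f(y)|\lesssim (1+|y|)^{p-1}$. Fix $S\ge 1$ and let $f_S$ be the radial truncation $f_S(y)=f(\pi_S(y))$, where $\pi_S$ is the projection onto $\overline{B(0,S)}$. Then $f_S$ is $CS^{p-1}$-Lipschitz, so Kantorovich--Rubinstein gives
\[
\int f_S\,d(\mu-\nu)\le CS^{p-1}W_1(\mu,\nu).
\]
The remainder satisfies $|f-f_S|(y)\lesssim |y|^p\mathbf 1_{|y|>S}$. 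Integrating against $\mu+\nu\in\Mom_{q,R}$ yields
\[
\int|f-f_S|\,d(\mu+\nu)\lesssim S^{p-q}R^q .
\]

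\emph{Step 4: optimization.} Balancing the two terms means choosing $S^{q-1}=R^q/W_1$. Since $q>1$, the total becomes
\[
S^{p-1}W_1=R^{\frac{q(p-1)}{q-1}}\,W_1^{1-\frac{p-1}{q-1}}=R^{\frac{q(p-1)}{q-1}}\,W_1^{\frac{q-p}{q-1}},
\]
which is exactly the claimed bound. If $W_1$ is large, so that this choice would give $S<1$, the inequality follows directly from the moment bounds, since then $W_1\lesssim R$.

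The main obstacle is Step 2. There one must make sure that the constants in the growth bound for $\psi$ are genuinely uniform in $\mu$, which the infimum formula and the boundedness of $\Omega$ should provide.
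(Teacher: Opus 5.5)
Your proof is correct and gives the same exponent, but the route differs from the paper's. Both proofs use the same key input, your Step 1: $\psi_{\rho\to\mu,p}$ is an infimum over $x\in\Omega$ of shifted copies of $y\mapsto|x-y|^p$, so $|\psi(y)-\psi(z)|\leq C|y-z|(1+|y|+|z|)^{p-1}$ with $C=C(p,\Omega)$ independent of $\mu$.

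The paper does not truncate. It takes a $\Wass_1$-optimal coupling $\gamma$ of $\mu,\nu$ and writes $\int f\,\dd(\mu-\nu)=\int (f(y)-f(z))\,\dd\gamma(y,z)$. It then concludes with the interpolation $|y-z|\leq |y-z|^{\alpha}(1+|y|+|z|)^{1-\alpha}$, $\alpha=\frac{q-p}{q-1}$, followed by Hölder's inequality with exponents $1/\alpha$ and $1/(1-\alpha)$. These exponents are chosen so that the second factor is exactly a $q$-th moment.

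You instead split $f$ into $f\circ\pi_S$, handled by Kantorovich--Rubinstein, and a tail, handled by the moment bound, and then optimize in $S$. This is the truncation scheme the paper uses in the proof of Theorem \ref{t:comparW1sW1}. The paper's version is shorter and needs neither a parameter nor a case distinction. Yours is more modular: it directly yields a bound $\inf_{S\geq 1}\big(S^{p-1}\Wass_1+\text{tail}(S)\big)$ under a mere uniform integrability condition on $|y|^p$, in the spirit of Theorem \ref{t:relativecompact}.

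Two minor points. First, the ``main obstacle'' you flag in Step 2 is not one. The constant in Step 1 depends only on $p$ and $A$, and Step 3 only uses $\Lip(f\circ\pi_S)$ and the differences $f(y)-f(\pi_S(y))$. So no normalization of $\psi$ is needed, and nothing about $\nabla\phi$ enters. Second, the regime $\Wass_1>R^q$ is best handled by taking $S=1$. Both terms are then $\lesssim R^q$, using $\Wass_1\leq 2R\leq 2R^q$. The right-hand side is then at least $R^{\frac{q(p-1)}{q-1}}R^{\frac{q(q-p)}{q-1}}=R^q$, which closes the case.
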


\begin{proof}
    We first establish a Lipschitz-like behaviour for the potential $\psi_{\rho\to\mu,p} = \phi_{\rho\to\mu,p}^c$. 
    Let $x \in \Omega$. Then, for any $y,z\in\Rsp^d$ we have 
    $$ \abs{x-y}^p - \abs{x-z}^p \leq  p \abs{y-z} \max\big(\abs{x-y},\abs{x-z}\big)^{p-1} \leq C \abs{y-z}(1+\abs{y}+\abs{z})^{p-1}$$
    for some constant $C = C(p,\Omega)$. Thus, 
    \begin{align*}
        \psi_{\rho\to\mu,p}(z) 
        &= \inf_{x\in\Omega} \abs{x-z}^p - \psi_{\rho\to\mu,p}(x)\\
        &\leq \inf_{x\in\Omega} \abs{x-y}^p - \psi_{\rho\to\mu,p}(x) + C \abs{y-z}(1+\abs{y}+\abs{z})^{p-1}\\
        &= \psi_{\rho\to\mu,p}(y) + C \abs{y-z}(1+\abs{y}+\abs{z})^{p-1}.
    \end{align*}
    Let $\gamma \in\Gamma(\mu,\nu)$ be optimal for the $1$-Wasserstein distance. Then,
    \begin{align*}
        \int\psi_{\rho\to\mu,p} \dd(\mu-\nu) &= \int (\psi_{\rho\to\mu,p}(y) - \psi_{\rho\to\nu,p}(z))\, \dd\gamma(y,z) 
        &\leq C \int \abs{y-z}(1+\abs{y}+\abs{z})^{p-1} \dd\pi(y,z).
    \end{align*}
    Setting $\alpha  = \frac{q-p}{q-1}\in(0,1)$ and using Hölder's inequality, we have 
    \begin{align*}
        \int \abs{y-z}(1+\abs{y}+\abs{z})^{p-1} \dd\pi(y,z) 
        &\leq \int \abs{y-z}^\alpha (1+\abs{y}+\abs{z})^{p-\alpha}\dd\pi(y,z)\\
        &\leq  \left(\int \abs{y-z}\dd\pi(y,z)\right)^\alpha\left(\int(1+\abs{y}+\abs{z})^{\frac{p-\alpha}{1-\alpha}}\dd\pi(y,z)\right)^{1-\alpha}\\
        &\leq C \Wass_1(\mu,\nu)^{\frac{q-p}{q-1}} \left(\int(1+\abs{y}+\abs{z})^{q}\dd\pi(y,z)\right)^{\frac{p-1}{q-1}}\\
        &\leq C R^{\frac{q(p-1)}{q-1}} \Wass_1(\mu,\nu)^{\frac{q-p}{q-1}} 
    \end{align*}
    where we used $1-\alpha = \frac{p-1}{q-1}$ and $\frac{p-\alpha}{1-\alpha} = q$, the assumption $M_q(\mu)\leq R^q$, $M_q(\nu)\leq R^q$ and finally $R\geq 1$.
\end{proof}

\begin{proof}[Proof of Theorem \ref{t:LOTq}]

We write $T_\mu = T_{\rho\rightarrow \mu,p}$ and $T_\nu=T_{\rho\rightarrow \nu,p}$ for simplicity. Let $\eta>0$ and define $\rho_\eta = \rho\vert_{\Omega_\eta}$, $\mu_\eta = T_{\mu\#}\rho_\eta$ and $\nu_\eta=T_{\nu\#}\rho_\eta$. By Lemma \ref{l:boundedtransporteta}, $\mu_\eta$ and $\nu_\eta$ are supported in a ball of radius  $R_\eta = CR^{\frac{p+1}{p-1}}\eta^{-\frac{d}{q}\frac{p+1}{p-1}-\frac{1}{p-1}}$. In the sequel, the constant $c$ depends only on $p,q,R,$ and $A=1+\sup_{x\in \Omega}|x|$ and may change from line to line.

Since $\rho_\eta$ is bounded above and below on $\Omega_\eta$ which is convex, we show in the appendix how a modification of \cite[Theorem 1.2]{mischlertrevisan} yields the existence of $\delta>0$ (depending on $p$ only) and $c>0$ (depending on $A,p,q$ only) such that 
\begin{equation}\label{e:mischlertrevisan}
\nr{T_\nu-T_\mu}_{\LL^2(\rho_\eta)} \leq c R_\eta^{1-(p-1)\delta} \left(\int (\psi_{\rho\to\mu,p} - \psi_{\rho\to\nu,p}) \dd(\nu-\mu)\right)^\delta.
\end{equation}
Therefore,
\begin{align}
\|T_\nu-T_\mu\|_{L^p(\rho_\eta)}^p\leq \|T_\nu-T_\mu\|_{L^1(\rho_\eta)}R_\eta^{p-1}&\leq \|T_\nu-T_\mu\|_{L^2(\rho_\eta)}R_\eta^{p-1}\nonumber \\&\leq c R_\eta^{p-1}R_\eta^{1-(p-1)\delta}
\left(\int (\psi_{\rho\to\mu,p} - \psi_{\rho\to\nu,p}) \dd(\nu-\mu)\right)^\delta \nonumber\\ &\leq c R_\eta^{p+\delta\frac{p-1}{q-1}} \Wass_1(\mu,\nu)^{\delta \frac{q-p}{q-1}} \label{e:TmuTnuLq}
\end{align}
The second inequality follows from Cauchy-Schwarz, the third inequality from Corollary \ref{c:localizedMT}, and the last inequality from Lemma \ref{l:boundW1-qmoment}.

Let $\sigma_\eta=  \rho - \rho_\eta$.
Using Hölder's inequality and then \cite[Proposition 4.6]{delalandemerigot} we get
\begin{equation}\label{e:TmuTnu+}\|T_\mu\|^p_{L^p(\sigma_\eta)}\leq M_r(\mu)^{\frac{p}{q}}\sigma_\eta(\R^d)^{1-\frac{p}{q}}\leq c\eta^{1-\frac{p}{q}}.
\end{equation}
Summing \eqref{e:TmuTnuLq} and \eqref{e:TmuTnu+}, we find for  $\alpha'=\left(p+\delta\frac{p-1}{q-1}\right)\left(\frac{d}{q}\frac{p+1}{p-1}+\frac{1}{p-1}\right)$
$$ \nr{T_\nu-T_\mu}^p_{\LL^p(\rho)} \leq c \eta^{-\alpha'} \Wass_1(\mu,\nu)^{\delta \frac{q-p}{q-1}} + c\eta^{1 - \frac{p}{q}}.$$
The exponent in the latter term is strictly positive if $q>p$. Hence, choosing $\eta$ is a way that $\eta^{-\alpha'} \Wass_1(\mu,\nu)^{\delta \frac{q-p}{q-1}} = \eta^{1 - \frac{p}{q}}$, we get the result.
\end{proof}

\subsection{Proof of Proposition \ref{p:turningpointp}}
For $n\in\N\setminus\{0\}$ and $\varepsilon\geq0$, let
$$
\mu_{n,\varepsilon}=\frac{1}{2n^p}\left(\delta_{(n,\varepsilon n)}+\delta_{(-n,-\varepsilon n)}\right)+\Bigl(1-\frac{1}{2n^{p}}\Bigr)\delta_0.
$$

These measures have moment of order $q$ uniformly bounded in $n\in\N$ and $\varepsilon\leq 1/100$. In the sequel, $n\geq 1$. We have
$$
W_p(\mu_{n,\varepsilon},\mu_{n,0})=(n^{-p} \varepsilon^pn^p)^{\frac1p}=\varepsilon.
$$
For notational simplicity, let $T_{n,\varepsilon}=T_{\rho\rightarrow \mu_{n,\varepsilon},p}$ denote  the optimal transport map for $p$-cost from $\rho$ to $\mu_{n,\varepsilon}$. We have
$$
T_{n,0}(x)=\begin{cases} 0 & \text{if } |x|\leq a_n \\ (-n,0) & \text{if } x\leq -a_n \\ (n,0) & \text{if } x\geq a_n\end{cases}
$$
where $0\leq a_n<1$ is defined by 
$$
\int_{a_n}^1 \sqrt{1-x^2} \dd x=\frac{1}{2n^p}.
$$
In particular one can check that $\frac{\sqrt{1-a_n^2}}{a_n}\leq 2n^{-p/3}$ for $n$ large enough.
Notice also that $T_{n,\varepsilon}$ is obtained by composing a rotation of angle $\tan^{-1}\varepsilon$ with $T_{n,0}$.
Therefore, as soon as $\varepsilon\geq 4n^{-p/3}$, 
$$
\|T_{n,\varepsilon}-T_{n,0}\|_{L^p(\rho)}=(n^{-p}n^p)^{\frac1p}=1.
$$
Taking $\mu_n=\mu_{n,0}$ and $\nu_n=\mu_{n,\varepsilon_n}$ with $\varepsilon_n=4n^{-p/3}$ and letting $n\rightarrow +\infty$, we get the result.

\section{Proofs for embedding via sliced-Wasserstein}

In this section, given a measure $\mu\in\Prob(\Rsp^d)$ and
$\theta\in\Sph^{d-1}$, we will denote $\mu_\theta = P_{\theta\#}\mu$
where $P_\theta(x) = \sca{x}{\theta}$. We recall that the uniform
probability measure over the unit sphere will be denoted
$\dd\theta$. Then, the sliced-Wasserstein embedding is
$$ \Phi_p^\SW : \Prob_p(\Rsp^d) \to \LL^p(\Sph^{d-1}\times[0,1]),
\qquad \Phi^\SW_p(\mu)(\theta,u) := Q^\mu_\theta(u), $$ and the
sliced-Wasserstein distance is $\SlicedWass_p(\mu,\nu) =
\nr{\Phi_p^\SW(\mu) - \Phi_p^\SW(\nu)}_{\LL^p(\Sph^{d-1}\times[0,1])}$.  In the statements and proofs
of this section, $c_d$ is a dimensional constant that may change from
line to line.

\subsection{Equivalence between $\SlicedWass_p$ and $\Wass_p$ on $\Mom_{q,R}$}

Proving that $\Phi^\SW_p$ is bi-Hölder over a class of measures is
equivalent to proving a bi-Hölder comparison between $\SlicedWass_p$
and $\Wass_p$ over this class. We start with the case $p=1$.

\begin{theorem} \label{t:comparW1sW1}
  Let $d\geq 2$ and $q>1$. Then, there exists a constant
  $c_{d,q}<\infty$ such that, for every $R\geq 0$ and
  \begin{equation}\label{eq:sliced-1-comparison}
    \forall \mu,\nu\in\mathcal{M}_{q,R}, \qquad \SlicedWass_1(\mu,\nu)
    \leq \Wass_1(\mu,\nu) \leq
    c_{d,q}R^{1-\beta}\SlicedWass_1(\mu,\nu)^\beta.
  \end{equation}
  where $\beta:=\frac{q-1}{dq-1}$ if $d$ is odd and
  $\beta:=\frac{q-1}{(d+1)q-1}$ if $d$ is even.
\end{theorem}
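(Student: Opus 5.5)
The lower inequality $\SlicedWass_1\leq \Wass_1$ was already established in the introduction, since the projections $P_\theta$ are $1$-Lipschitz. For the upper inequality we proceed by duality and smoothing. By Kantorovich--Rubinstein, $\Wass_1(\mu,\nu)=\sup_{\Lip(f)\leq 1}\int f\,\dd(\mu-\nu)$. Fix such an $f$ with $f(0)=0$, and a truncation radius $S>0$. Let $\chi_S$ be a cutoff equal to $1$ on $B(0,S)$, supported in $B(0,2S)$ and $1/S$-Lipschitz, and split $f=f\chi_S+f(1-\chi_S)$. The tail part is bounded by
$$\int |f|(1-\chi_S)\,\dd(\mu+\nu)\leq \int_{|x|>S}|x|\,\dd(\mu+\nu)\leq 2R^qS^{1-q}.$$
The function $g=f\chi_S$ is $3$-Lipschitz, bounded by $2S$, and supported in $B(0,2S)$. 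Next we mollify it: $g_\varepsilon=g*\eta_\varepsilon$ satisfies $\|g-g_\varepsilon\|_\infty\leq 3\varepsilon$, and its derivatives of order $k$ are bounded by $c\,\varepsilon^{1-k}$. This costs an error $6\varepsilon$.

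The key step is to represent $g_\varepsilon$ through slices, using the inversion formula for the Radon transform. Write $g_\varepsilon(x)=\int_{\Sph^{d-1}} h_\theta(\sca{x}{\theta})\,\dd\theta$, where $h_\theta=c_d\,\Lambda^{d-1}(Rg_\varepsilon)(\theta,\cdot)$ is the back-projection kernel. Then
$$\int g_\varepsilon\,\dd(\mu-\nu)=\int_{\Sph^{d-1}}\int_\Rsp h_\theta\,\dd(\mu_\theta-\nu_\theta)\,\dd\theta\leq \sup_\theta \Lip(h_\theta)\,\SlicedWass_1(\mu,\nu),$$
where the last inequality applies one-dimensional Kantorovich--Rubinstein duality on each slice. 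The main obstacle is bounding $\Lip(h_\theta)=\|\partial_t h_\theta\|_\infty$.

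When $d$ is odd, $\Lambda^{d-1}=(-\partial_t^2)^{(d-1)/2}$ is a local operator. Hence $\partial_t h_\theta$ is a derivative of order $d$ of $Rg_\varepsilon$, and equals $Rg_\varepsilon$ applied to a derivative of order $d$ of $g_\varepsilon$. Integrating over a hyperplane section of $B(0,2S)$ gives $\|\partial_t h_\theta\|_\infty\lesssim S^{d-1}\varepsilon^{1-d}$. When $d$ is even, $\Lambda^{d-1}$ involves a Hilbert transform. To avoid the Hilbert transform estimate, we instead lift to dimension $d+1$: view the measures in $\Rsp^{d+1}$ and note that $\SlicedWass_1$ in $\Rsp^{d+1}$ is controlled by $\SlicedWass_1$ in $\Rsp^d$, because projections on directions in $\Sph^d$ factor through $\Rsp^d$ with a Lipschitz factor $\leq1$ and a suitable Jacobian weight. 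Alternatively, we use one more derivative and interpolate. Either way this yields $S^{d}\varepsilon^{-d}$, which explains the exponent $d+1$ replacing $d$. We expect this even case to be the delicate part.

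It remains to collect the three terms. In the odd case,
$$\Wass_1\lesssim R^qS^{1-q}+\varepsilon+S^{d-1}\varepsilon^{1-d}\SlicedWass_1.$$
Taking $\varepsilon=S^{1-q}R^q$ and optimizing in $S$ gives $S\sim R^{\frac{dq-q}{dq-1}}\,\SlicedWass_1^{-\frac{1}{dq-1}}$ (up to bookkeeping, the correct powers of $R$ follow from homogeneity under dilation $x\mapsto \lambda x$). This produces $\Wass_1\lesssim R^{1-\beta}\SlicedWass_1^{\beta}$ with $\beta=\frac{q-1}{dq-1}$. In the even case, replacing $d$ by $d+1$ in the last term gives $\beta=\frac{q-1}{(d+1)q-1}$. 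Since both sides scale linearly under dilation and $\Mom_{q,R}$ is the dilation of $\Mom_{q,1}$, it suffices to treat $R=1$.
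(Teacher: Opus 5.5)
Your proposal is correct and follows essentially the same route as the paper. You truncate the $1$-Lipschitz Kantorovich potential at radius $S$, mollify at scale $\varepsilon$, use the Radon inversion formula for odd $d$ to bound the Lipschitz constants of the slice functions by $S^{d-1}\varepsilon^{1-d}$, apply one-dimensional Kantorovich--Rubinstein duality on each slice, and lift to $\Rsp^{d+1}$ when $d$ is even; your observation that $\SlicedWass_1$ computed in $\Rsp^{d+1}$ is a constant multiple, at most $1$, of $\SlicedWass_1$ in $\Rsp^d$ supplies the justification the paper leaves implicit.

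The only slip is the intermediate choice of $S$, which should scale like $R^{\frac{dq}{dq-1}}\SlicedWass_1(\mu,\nu)^{-\frac{1}{dq-1}}$ rather than $R^{\frac{dq-q}{dq-1}}\SlicedWass_1(\mu,\nu)^{-\frac{1}{dq-1}}$. Your reduction to $R=1$ by dilation makes this harmless.
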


The proof of \Cref{t:comparW1sW1} relies on the following lemma, which
uses Radon transform arguments from
\cite{carlierfigallimerigotwang}. This lemma allows one to apply the
truncation of Kantorovich potentials originally found in \cite[Lemma
  3.1]{bobkov} in the case of the maximum sliced-Wasserstein distance.

\begin{lemma} \label{lem:carlierfigallimerigotwang-test}
Assume that $d\geq3$ is odd. There exists a dimensional constant $c_d$
such that for every $A\geq 0$, all $\mu,\nu\in\Prob_1(\Rsp^d)$ and
every Lipschitz $f:\Rsp^d\to\Rsp$ supported in the ball $B_A=B(0,A)$,
\[
  \abs{\int_{\Rsp^d}f\dd(\mu-\nu)} \leq
  c_d\Lip(f)A^{\frac{d-1}{d}}\SlicedWass_1(\mu,\nu)^{\frac1d}.
\]
\end{lemma}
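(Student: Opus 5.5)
We will use the Radon transform and its inversion formula in odd dimension, following \cite{carlierfigallimerigotwang}. Write $\mathcal R f(\theta,t)=\int_{\langle x,\theta\rangle=t} f$ for the Radon transform. For any $f$ we have
$$\int_{\Rsp^d} f\dd(\mu-\nu)=\int_{\Sph^{d-1}}\int_\Rsp g(\theta,t)\dd(\mu_\theta-\nu_\theta)(t)\dd\theta,$$
provided $g$ is a function on $\Sph^{d-1}\times\Rsp$ with $\mathcal R^* g=f$, where $\mathcal R^*$ is the dual (back-projection) operator. For $d$ odd the inversion formula gives such a $g$ explicitly: $g = c_d\,\partial_t^{d-1}\mathcal R f$, a local operator. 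Then Kantorovich–Rubinstein duality in one dimension bounds each slice:
$$\abs{\int g(\theta,\cdot)\dd(\mu_\theta-\nu_\theta)}\leq \Lip(g(\theta,\cdot))\,\Wass_1(\mu_\theta,\nu_\theta).$$
After integrating in $\theta$, this controls the integral by $\sup_\theta\Lip_t(g)\cdot\SlicedWass_1(\mu,\nu)$. The difficulty is that $f$ is only Lipschitz, so $\partial_t^{d}\mathcal R f$ need not be bounded. We therefore regularize.

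Let $f_\e=f*\chi_\e$ with a standard mollifier. Then $f_\e$ is supported in $B_{A+\e}$, and $\|f-f_\e\|_\infty\leq \Lip(f)\e$. Using $\Lip$-duality on the pair $(\mu,\nu)$ would cost $\Wass_1$, which we want to avoid. Instead we use only that $\mu,\nu$ are probability measures, which gives the error bound
$$\abs{\int (f-f_\e)\dd(\mu-\nu)}\leq 2\Lip(f)\e.$$
For the smooth part, $\partial_t^{d}\mathcal R f_\e(\theta,t)=\mathcal R(\partial_\theta^{d} f_\e)$, where $\partial_\theta$ is the directional derivative along $\theta$. We place one derivative on $f$ and $d-1$ on the mollifier, giving
$$\|\partial_\theta^d f_\e\|_\infty\leq c_d\Lip(f)\e^{-(d-1)}.$$
Integrating over a hyperplane section of $B_{A+1}$ (assuming $\e\leq A$) gives
$$\sup_{\theta,t}\abs{\partial_t^d\mathcal R f_\e}\leq c_d\Lip(f)\e^{1-d}A^{d-1}.$$
Hence
$$\abs{\int f_\e\dd(\mu-\nu)}\leq c_d\Lip(f)A^{d-1}\e^{1-d}\SlicedWass_1(\mu,\nu).$$

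It remains to optimize over $\e$: minimize $\e+A^{d-1}\e^{1-d}\SlicedWass_1$. Balancing the two terms gives $\e^d=A^{d-1}\SlicedWass_1$, i.e. $\e=A^{(d-1)/d}\SlicedWass_1^{1/d}$. The total is then $c_d\Lip(f)A^{\frac{d-1}{d}}\SlicedWass_1^{1/d}$, as claimed. If this $\e$ exceeds $A$, the bound is trivial anyway, since $\abs{\int f\dd(\mu-\nu)}\leq 2\|f\|_\infty\leq 2\Lip(f)A\leq 2\Lip(f)\e$.

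The main obstacle is the careful justification of the duality identity $\int f\dd\mu=\int\int g\dd\mu_\theta\dd\theta$ for measures in $\Prob_1$. This requires checking that $g=c_d\partial_t^{d-1}\mathcal R f_\e$ is bounded and Lipschitz in $t$ (it is compactly supported in $t$, since $f_\e$ has compact support), so that Fubini applies. It also requires keeping the constant normalization of $\dd\theta$ as a probability measure. A secondary subtlety is that $\partial_t\mathcal R f=\mathcal R(\partial_\theta f)$ needs $f_\e$ to be $C^d$, which the mollification provides.
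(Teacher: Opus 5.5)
Your proposal is correct and follows essentially the same route as the paper. You mollify at scale $\varepsilon$, use the odd-dimensional Radon inversion to write $f*\chi_\varepsilon$ as an average of ridge functions $c_d\,\partial_t^{d-1}\mathcal R(f*\chi_\varepsilon)(\theta,\langle x,\theta\rangle)$, bound their Lipschitz constants by putting one derivative on $f$ and $d-1$ on the mollifier, apply Kantorovich--Rubinstein slice by slice, and balance the two error terms. The only difference is cosmetic: you integrate the sup bound over a hyperplane section of $\mathrm{spt}(f*\chi_\varepsilon)\subseteq B_{A+\varepsilon}$ (your $B_{A+1}$ should read $B_{A+\varepsilon}$), which forces your case split $\varepsilon\le A$, whereas the paper keeps the convolution structure and bounds by the volume of a slab of width $2\lambda$ inside $B_A$, so it needs no case distinction.
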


\begin{proof}
Let $\chi$ be a smooth probability density supported in $B_1$, let
$\chi_\lambda=\lambda^{-d}\chi(\cdot/\lambda)$, and set
$\psi=f*\chi_\lambda$, with $\lambda> 0$. Since $f$ is Lipschitz,
$$\abs{\psi(x) - f(x)} \leq \sup_{y\in B(x,\lambda)} \abs{f(y) - f(x)} \leq \lambda \Lip(f).$$
Integrating this inequality gives us
\begin{equation}\label{eq:moll-error}
  \abs{\int(f-\psi)\dd(\mu-\nu)}\leq 2\Lip(f)\lambda.
\end{equation}
For odd $d$, the Radon inversion formula used in
\cite[(8),(12)]{carlierfigallimerigotwang} gives
\[
  \psi(x)=\int_{\Sph^{d-1}}\psi_\theta(\sca{x}{\theta})\dd\sigma(\theta),
  \qquad
  \psi_\theta(t)=c_d\,\partial_t^{d-1}R^\theta\psi(t), \qquad   R^\theta\psi(t):=\int_{\theta^\perp}\psi(x+t\theta)
  \dd\mathcal H^{d-1}(x)
  \]
Following \cite{carlierfigallimerigotwang}, we will estimate the
Lipschitz constant of $\psi_\theta$. Writing
$\partial_\theta:=\theta\cdot\nabla$ and using Fubini,
\begin{align}
  (\psi_\theta)'(t)
  &=c_d\int_{\theta^\perp}\partial_\theta^d\psi(x+t\theta)
    \dd\mathcal H^{d-1}(x)\notag\\
  &=\frac{c_d}{\lambda^{d-1}}
    \int_{B_A}\partial_\theta f(y) \int_{\theta^\perp}
    \,\partial_\theta^{d-1}\chi\!\left(
      \frac{x+t\theta-y}{\lambda}\right)\lambda^{-d}
    \dd\mathcal H^{d-1}(x) \dd y, \label{eq:psi-prime}
\end{align}
where we used $\spt(f) \subseteq B_A$ to restrict the first integral.
Moreover, writing $y = y' + \sca{y}{\theta}\theta$ with $y'\in\theta^\perp$ and substituting
$x = y' + \lambda w$, $w\in\theta^\perp$,
\begin{align*}
  \abs{\int_{\theta^\perp} \partial_\theta^{d-1}\chi\left(
    \frac{x+t\theta-y}{\lambda}\right)\lambda^{-d} \dd\mathcal H^{d-1}(x)}
  &= \frac{1}{\lambda} \abs{\int_{\theta^\perp}\partial_\theta^{d-1}\chi\!\left(w
     + \frac{t-\sca{y}{\theta}}{\lambda}\theta\right) \dd\mathcal H^{d-1}(w)} \\
  &\leq \frac{\omega_{d-1}}{\lambda} \nr{\partial_\theta^{d-1}\chi}_\infty
     \mathbf{1}_{\abs{\sca{y}{\theta}-t}\leq\lambda},
\end{align*}
where $\omega_{d-1}$ is the volume of the unit ball in $\Rsp^{d-1}$, and where we used that
$\chi$ is supported in $B_1$. Combining this inequality with \eqref{eq:psi-prime} and
$\abs{\partial_\theta f}\leq\Lip(f)$ a.e., we get
$$ \abs{(\psi_\theta)'(t)} \leq \frac{c_d}{\lambda^d}\nr{\partial_\theta^{d-1}\chi}_\infty \Lip(f)\,
   \mathrm{Leb}\big(\{y\in B_A : \abs{\sca{y}{\theta}-t}\leq\lambda\}\big)
   \leq c_d\frac{A^{d-1}}{\lambda^{d-1}}\Lip(f), $$
since this slab has width $2\lambda$ and cross-sections of $(d-1)$-volume at most
$\omega_{d-1}A^{d-1}$. Here and in the sequel, $\mathrm{Leb}$ denotes the Lebesgue measure. Let $\mu_\theta, \nu_\theta$ denote the projections of $\mu,\nu$ on $\R\theta$.
Then, we use Fubini's theorem and Kantorovich-Rubinstein's inequality,
  \begin{align*}
    \abs{\int \psi \dd(\mu-\nu)}
    &= \abs{\int_{\Rsp^d}\int_{\Sph^{d-1}} \psi_\theta(\sca{x}{\theta})\dd\sigma(\theta) \dd(\mu-\nu)(x)}\\ 
    &\leq  \int_{\Sph^{d-1}} \abs{\sca{\psi_\theta}{\mu_\theta - \nu_\theta}} \dd\sigma(\theta)\\
    &\leq \left(\sup_{\theta} \Lip(\psi_\theta)\right) \SlicedWass_1(\mu,\nu) 
    \leq c_d \frac{A^{d-1}}{\lambda^{d-1}} \Lip(f) \SlicedWass_1(\mu,\nu).
  \end{align*}
  Together with \eqref{eq:moll-error}, this implies
$$ \abs{\int f\dd(\mu-\nu)}  \leq 2\Lip(f)\lambda
  +c_d\Lip(f) A^{d-1}\lambda^{-(d-1)} \SlicedWass_1(\mu,\nu).
$$
Choosing $\lambda$ in a way that the two terms in the right-hand side are equal, we get the desired estimate.
\end{proof}

\begin{proof}[Proof of \Cref{t:comparW1sW1}]
  The first inequality in \eqref{eq:sliced-1-comparison} is well-known
  and follows from the fact that $P_\theta(x) := \sca{x}{\theta}$ is
  $1$-Lipschitz. We now prove the second inequality, starting with the
  odd-dimensional case. Let $\phi: \Rsp^d\to\Rsp$ be an optimal
  Kantorovich potential associated to the distance
  $\Wass_1(\mu,\nu)$. In particular, $\phi$ is $1$-Lipschitz and we
  may assume $\phi(0)=0$. Choose a $1$-Lipschitz function
  $\eta:[0,\infty)\to[0,1]$ such that $\eta\equiv 1$ on $[0,1]$ and
    $\eta \equiv 0$ on $[2,\infty)$, and define the truncated
      potential
\[
  \phi_L(x):=\eta\!\left(\frac{|x|}{L}\right)\phi(x).
\]
The function $\phi_L$ coincides with $\phi$ on $B_L$, it is
supported on $B_{2L}$ and $\Lip(\phi_L) \leq 3$.  Moreover, using
$\phi_L(0) = \phi(0) = 0$ we get $\abs{\phi(x)} + \abs{\phi_L(x)} \leq
4 |x|$, implying
\begin{align}
  \abs{\int(\phi-\phi_L)\dd(\mu-\nu)} 
  &\leq \int_{\Rsp^d\setminus B_L} \abs{\phi}+\abs{\phi_L}\dd(\mu-\nu)\notag\\
  &\leq 4 \int_{\Rsp^d\setminus B_L} |x|\dd(\mu+\nu)(x)\notag\\
  &\leq 4 \int_{\Rsp^d\setminus B_L} |x|^q L^{1-q} \dd(\mu+\nu)(x)\notag\\
  &\leq 4 L^{1-q}(M_q(\mu) + M_q(\nu)) \notag\\
  &\leq 8 R^qL^{1-q}.\label{eq:tail}
\end{align}
Applying Lemma \ref{lem:carlierfigallimerigotwang-test} to $\phi_L$, which is supported on $B_{2L}$ (i.e. $A= 2L$) gives us
\begin{equation}\label{eq:compact-part}
  \abs{\int\phi_L\dd(\mu-\nu)}
  \leq c_d L^{\frac{d-1}{d}}\SlicedWass_1(\mu,\nu)^{\frac1d}.
\end{equation}
Combining \eqref{eq:tail}, \eqref{eq:compact-part} and the optimality
of $\phi$ for $\Wass_1(\mu,\nu)$, we get for every $L>0$,
\begin{equation}\label{eq:before-opt}
  \Wass_1(\mu,\nu) = \abs{\int \phi\dd(\mu - \nu)}
  \leq 8 R^qL^{1-q}
  +c_d L^{1-\frac{1}{d}} \SlicedWass_1(\mu,\nu)^{\frac1d}.
\end{equation}
Choosing $L$ in a way that the two terms in the right-hand side are equal leads to \eqref{eq:sliced-1-comparison}.  The
case where $d$ is even can be reduced to the odd-dimensional case by
embedding the problem in $\Rsp^{d+1}$.
\end{proof}

The generalization of this result to other exponents $p>1$ relies on
Lemma~\ref{l:wassboundedmoments}, which generalizes the well known
fact that on a bounded metric space with diameter $D$ and any $r\geq
p\geq 1$, the $\Wass_p$ and $\Wass_r$ distances are bi-Hölder
equivalent with $\Wass_p \leq \Wass_r \leq D^{1-p/r} \Wass_p^{r/p}$.

\begin{lemma}\label{l:wassboundedmoments}
  Let $(X,d)$ be a metric space, let $\mu,\nu\in\mathcal{M}_q(X)$ and let
  $x_0\in X$. Let  $1 \leq p \leq r < q$ and assume that for some $R>0$,
  $$ \int d(x,x_0)^q \dd\mu(x) \leq R^q, \int d(y,x_0)^q \dd\nu(y)\leq R^q.$$
  Then, with $\alpha = \frac{q(r-p)}{r(q-p)}$ and $1-\alpha = \frac{p(q-r)}{r(q-p)}$ one has
  $$ \Wass_p(\mu,\nu) \leq \Wass_r(\mu,\nu) \leq (2R)^\alpha \Wass_p(\mu,\nu)^{1-\alpha}.
  $$
\end{lemma}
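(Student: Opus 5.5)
The first inequality $\Wass_p\leq\Wass_r$ is the usual monotonicity of Wasserstein distances in the exponent, which follows from Jensen's (or Hölder's) inequality applied to any coupling. For the second inequality, I fix an optimal coupling $\pi\in\Gamma(\mu,\nu)$ for $\Wass_p$. I then bound $\Wass_r^r(\mu,\nu)\leq \int d(x,y)^r\dd\pi(x,y)$ and interpolate the exponent $r$ between $p$ and $q$ with Hölder's inequality. Concretely, I write $r=\theta p+(1-\theta)q$ with $\theta=\frac{q-r}{q-p}\in(0,1]$, and apply Hölder with conjugate exponents $1/\theta$ and $1/(1-\theta)$:
$$\int d(x,y)^r\dd\pi\leq \Big(\int d(x,y)^p\dd\pi\Big)^{\theta}\Big(\int d(x,y)^q\dd\pi\Big)^{1-\theta}.$$

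Next I control the $q$-th moment of the cost. The triangle inequality gives $d(x,y)\leq d(x,x_0)+d(y,x_0)$. Minkowski's inequality in $\LL^q(\pi)$ then yields
$$\Big(\int d(x,y)^q\dd\pi\Big)^{1/q}\leq \Big(\int d(x,x_0)^q\dd\mu\Big)^{1/q}+\Big(\int d(y,x_0)^q\dd\nu\Big)^{1/q}\leq 2R.$$
Hence $\int d^q\dd\pi\leq (2R)^q$. Since $\pi$ is optimal for $\Wass_p$, we also have $\int d^p\dd\pi=\Wass_p^p$.

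Combining the two bounds gives $\Wass_r^r\leq \Wass_p^{p\theta}(2R)^{q(1-\theta)}$. Taking $r$-th roots, the exponent of $\Wass_p$ is $\frac{p\theta}{r}=\frac{p(q-r)}{r(q-p)}=1-\alpha$. The exponent of $2R$ is $\frac{q(1-\theta)}{r}=\frac{q(r-p)}{r(q-p)}=\alpha$, which matches the statement.

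No step here is a genuine obstacle; the only care needed is in the exponent bookkeeping, and in checking the degenerate case $r=p$, where $\theta=1$, $\alpha=0$, and the inequality is trivial.
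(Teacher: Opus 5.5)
Your proof is correct and follows the paper's argument: you interpolate the cost between $\LL^p(\pi)$ and $\LL^q(\pi)$ for a coupling $\pi$ using Hölder, and you bound the $\LL^q$ norm by $2R$ via the triangle and Minkowski inequalities. The only nuance is that in a general metric space an optimal $\Wass_p$ coupling need not exist, so you should run the same estimate with $\varepsilon$-optimal couplings (the paper uses a minimizing sequence) and pass to the limit.
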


\begin{proof}
The inequality on the left holds without any assumption; let us prove
the other inequality. The exponent $\alpha$ in the statement is chosen
such that $\frac{1}{r} = \frac{1-\alpha}{p} + \frac{\alpha}{q}$. The
assumption $1 \leq p \leq r < q$ implies that $0 \leq \alpha < 1$.
Let $\gamma \in \Gamma(\mu,\nu)$ be any transport plan between $\mu$
and $\nu$.  Hölder's inequality gives $\nr{d}_{\LL^r(\gamma)} \leq
\nr{d}_{\LL^p(\gamma)}^{1-\alpha} \nr{d}_{\LL^q(\gamma)}^{\alpha}.$
Moreover, using the triangle and Minkowski's inequalities,
  \begin{align*}
    \nr{d}_{\LL^q(\gamma)}
    &\leq \nr{d(\cdot,x_0)}_{\LL^q(\gamma)} + \nr{d(x_0,\cdot)}_{\LL^q(\gamma)}\\
    &= \nr{d(\cdot,x_0)}_{\LL^q(\mu)} + \nr{d(x_0,\cdot)}_{\LL^q(\nu)} \leq 2R
  \end{align*}
Combining these two facts and taking a minimizing sequence of transport plans 
$\gamma_n$ with respect to the cost $\Wass_p$ we obtain the desired conclusion.
\end{proof}

We recall the notation
$$
\mathcal{M}_{q,R} := \{ \mu\in\Prob(\Rsp^d) \mid M_q(\mu) \leq R^q\}.
$$
\begin{corollary}\label{c:comparWSW} Let $1\leq p \leq r < q$. Then
\begin{itemize}
\item the metrics $\Wass_p$ and $\Wass_r$ are bi-Hölder equivalent on $\mathcal{M}_{q,R}$:
  for $\alpha=\frac{q(r-p)}{r(q-p)}$, one has
  $$ \forall \mu,\nu\in\mathcal{M}_{q,R}, \qquad \Wass_p(\mu,\nu) \leq \Wass_r(\mu,\nu) \leq (2R)^\alpha \Wass_p(\mu,\nu)^{1-\alpha}.$$
\item the metrics $\SlicedWass_p$ and $\SlicedWass_r$ are bi-Hölder equivalent for the same $\alpha$,
    $$ \forall \mu,\nu\in\mathcal{M}_{q,R}, \qquad \SlicedWass_p(\mu,\nu) \leq \SlicedWass_r(\mu,\nu) \leq (2R)^\alpha \SlicedWass_p(\mu,\nu)^{1-\alpha}.$$
\end{itemize}
\end{corollary}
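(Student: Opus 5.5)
The first bullet is Lemma \ref{l:wassboundedmoments} applied with $X=\Rsp^d$, the Euclidean distance and base point $x_0=0$. Indeed, $\mu,\nu\in\mathcal{M}_{q,R}$ means exactly $\int |x|^q\dd\mu\leq R^q$ and $\int|y|^q\dd\nu\leq R^q$. The exponent $\alpha$ is the one in the lemma, so nothing more is needed.

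For the second bullet, the plan is to apply the lemma slice by slice and then integrate over $\Sph^{d-1}$.

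\emph{Step 1: the slices inherit the moment bound.} Fix $\theta\in\Sph^{d-1}$. Since $|P_\theta(x)|=|\sca{x}{\theta}|\leq|x|$, the projected measures satisfy
\[
\int_\Rsp |t|^q\dd\mu_\theta(t)=\int|\sca{x}{\theta}|^q\dd\mu(x)\leq M_q(\mu)\leq R^q,
\]
and likewise for $\nu_\theta$.

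\emph{Step 2: apply the lemma on the line.} Lemma \ref{l:wassboundedmoments} on $X=\Rsp$ with $x_0=0$ gives, for every $\theta$,
\[
\Wass_p(\mu_\theta,\nu_\theta)\leq\Wass_r(\mu_\theta,\nu_\theta)\leq(2R)^\alpha\,\Wass_p(\mu_\theta,\nu_\theta)^{1-\alpha}.
\]

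\emph{Step 3: integrate the lower bound.} Since $\dd\theta$ is a probability measure and $p\leq r$, Jensen (or Hölder) gives
\[
\SlicedWass_p=\Big(\int\Wass_p^p(\mu_\theta,\nu_\theta)\dd\theta\Big)^{1/p}\leq\Big(\int\Wass_p^r(\mu_\theta,\nu_\theta)\dd\theta\Big)^{1/r}\leq\SlicedWass_r.
\]

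\emph{Step 4: integrate the upper bound.} Raise the slice inequality to the power $r$ and integrate:
\[
\SlicedWass_r^r\leq(2R)^{\alpha r}\int\Wass_p(\mu_\theta,\nu_\theta)^{(1-\alpha)r}\dd\theta.
\]
Here $(1-\alpha)r=\frac{p(q-r)}{q-p}\leq p$, because $q-r\leq q-p$. The function $s\mapsto s^{(1-\alpha)r/p}$ is therefore concave, and Jensen gives
\[
\int\Wass_p^{(1-\alpha)r}\dd\theta\leq\Big(\int\Wass_p^p\dd\theta\Big)^{(1-\alpha)r/p}=\SlicedWass_p^{(1-\alpha)r}.
\]
Taking $r$-th roots yields $\SlicedWass_r\leq(2R)^\alpha\SlicedWass_p^{1-\alpha}$.

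The only point needing care is Step 4: one must check that $(1-\alpha)r\leq p$, so that Jensen runs in the right direction. The inequality $q-r\leq q-p$ settles this.
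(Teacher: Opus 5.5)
Your proposal is correct and follows the paper's proof essentially step by step. You get the first bullet directly from Lemma~\ref{l:wassboundedmoments}. For the second, you apply the lemma to each pair of projections $\mu_\theta,\nu_\theta$ (which inherit the $q$-moment bound), then integrate over the sphere, using the exponent check $r(1-\alpha)=\frac{p(q-r)}{q-p}\leq p$ together with monotonicity of $L^s(\dd\theta)$ norms (your Jensen steps).
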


\begin{proof} The first claim follows directly from the previous lemma.
  Let us prove the sliced version.  For $\theta\in\Sph^{d-1}$, denote
  by $\mu_\theta$ and $\nu_\theta$ the projections of $\mu$ and $\nu$
  on $\Rsp\theta$. By definition, for $s\geq1$,
  $$\SlicedWass_s(\mu,\nu) =
  \nr{\Wass_s(\mu_\cdot,\nu_\cdot)}_{\LL^s(\sigma)},$$ where $\sigma$
  is the uniform probability measure on $\Sph^{d-1}$.  Using the
  pointwise inequality $\Wass_p(\mu_\theta,\nu_\theta)\leq
  \Wass_r(\mu_\theta,\nu_\theta)$ and then the monotonicity of $L^s$
  norms on the probability space $(\Sph^{d-1},\sigma)$, we obtain
  $\SlicedWass_p \leq \SlicedWass_r$. For the converse estimate, we
  use the bounds on the moments of $\mu$ and $\nu$. Since the
  orthogonal projection onto $\Rsp\theta$ is $1$-Lipschitz, we have
  $M_q(\mu_\theta) \leq M_q(\mu) \leq R^q$ and similarly
  $M_q(\nu_\theta) \leq R^q$.  Therefore, applying the previous lemma
  to $\mu_\theta$ and $\nu_\theta$, we obtain, for every
  $\theta\in\Sph^{d-1}$,
  \[\Wass_r(\mu_\theta,\nu_\theta) \leq (2R)^\alpha
    \Wass_p(\mu_\theta,\nu_\theta)^{1-\alpha}.  \]
    Integrating this inequality gives us
\begin{align*}
  \SlicedWass_r(\mu,\nu)
  =
  \left(
    \int_{\Sph^{d-1}}
    \Wass_r(\mu_\theta,\nu_\theta)^r
    \dd\sigma(\theta)
  \right)^{1/r}
  \leq
  (2R)^\alpha
  \left(
    \int_{\Sph^{d-1}}
    \Wass_p(\mu_\theta,\nu_\theta)^{r(1-\alpha)}
    \dd\sigma(\theta)
    \right)^{1/r}.
\end{align*}
We now remark that $r(1-\alpha) = \frac{p(q-r)}{q-p} \leq p$.
Therefore, again by monotonicity of $L^s(\sigma)$ norms with respect to $s$,
\begin{align*}
  \left( \int_{\Sph^{d-1}}\Wass_p(\mu_\theta,\nu_\theta)^{r(1-\alpha)} \dd\sigma(\theta)
  \right)^{1/(r(1-\alpha))}
  \leq \left( \int_{\Sph^{d-1}} \Wass_p(\mu_\theta,\nu_\theta)^p \dd\sigma(\theta) \right)^{1/p} =
  \SlicedWass_p(\mu,\nu).
\end{align*}
Raising this inequality to the power $1-\alpha$ concludes the proof.
\end{proof}

Combining Theorem \ref{t:comparW1sW1} and Corollary \ref{c:comparWSW} we obtain Theorems \ref{t:compareWpSWpholder} and \ref{t:sw}.


\subsection{Non-equivalence of $\Wass_p$ and $\SlicedWass_p$ on $\mathcal{M}_{p,R}$}\label{s:nonequivalence}

In this paragraph, we construct a counter-example showing that $\Wass_p$ and $\SlicedWass_p$ are not equivalent on the set $\Mom_{p,R}$ for $p \in\N\cap[1,+\infty)$.

\begin{theorem}\label{th:nonequiv-sw}
  Let $p\geq 1$ be an integer and let $d\geq 2$. Then, there exist
  probability measures $(\mu_R)_{R>0}$ and $(\nu_{R})_{R>0}$ on
  $\Rsp^d$ such that $M_p(\mu_R) = M_p(\nu_R) = 1$ and such that
$$\lim\inf \Wass_p(\mu_R,\nu_R) > 0 \quad \hbox{ and } \quad \lim_{R\rightarrow +\infty}\SlicedWass_p(\mu_R,\nu_R) = 0.$$
\end{theorem}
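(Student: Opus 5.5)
The plan is to exploit mass escaping to infinity, which is exactly what the bound $M_p\le 1$ (without control of higher moments) allows, and which Theorem~\ref{t:compareWpSWpholder} forbids when $q>p$. It suffices to treat $d=2$, since both measures can be placed in a plane $\Rsp^2\subset\Rsp^d$ and $\SlicedWass_p$ in $\Rsp^d$ is an average over $\theta$ of the two-dimensional quantities for the projected directions.

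\textbf{Ansatz.} I would take
$$\mu_R=(1-m_R)\delta_0+m_R\,(D_R)_\#\sigma_R,\qquad \nu_R=(1-m_R)\delta_0+m_R\,(D_R)_\#\tau_R,$$
where $D_R$ is the dilation by $R$ and $m_R=R^{-p}$. Here $\sigma_R,\tau_R$ are probability measures on the annulus $\{1\le |x|\le K\}$ with $K$ fixed, normalised so that $M_p(\mu_R)=M_p(\nu_R)=1$. Both measures are rotation invariant, described by radial laws $a_R,b_R$ on $[1,K]$.

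\textbf{Step 1 (lower bound on $\Wass_p$).} Any coupling must either move far mass by a distance comparable to the gap between the radial laws, or send far mass to the origin, which costs at least $m_R R^p=1$ per unit. Hence $\Wass_p(\mu_R,\nu_R)^p\gtrsim \min\bigl(1,\Wass_p(a_R,b_R)^p\bigr)$, so it is enough that $\Wass_p(a_R,b_R)$ stays bounded below.

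\textbf{Step 2 (upper bound on $\SlicedWass_p$).} By rotation invariance every projection $(\mu_R)_\theta$ equals the same one-dimensional law. This law is $(1-m_R)\delta_0$ plus $m_R$ times the dilation by $R$ of $a_R\circledast\mathrm{arcsine}$, the radial law mixed with the arcsine distribution. So $\SlicedWass_p^p(\mu_R,\nu_R)=\Wass_p^p\bigl((\mu_R)_\theta,(\nu_R)_\theta\bigr)$ for any $\theta$. I would bound this one-dimensional distance through the quantile functions. The monotone coupling sends the far positive part of one measure to the far positive part of the other, so the cost is $m_R R^p$ times a one-dimensional $\Wass_p^p$ between the mixed laws, plus a correction from mass exchanged with the atom at $0$.

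\textbf{Step 3 (choosing $a_R,b_R$; the main obstacle).} I need radial laws that are $\Wass_p$-separated, with mixed projections whose $\Wass_p$ distance tends to $0$. At a single fixed scale this is impossible: the measures would be compactly supported and Theorem~\ref{th:bonnotte} would apply. So $a_R,b_R$ must themselves spread over many dyadic scales, with the support ratio $K=K_R\to\infty$, while keeping $m_R$ adjusted to the $p$-th moment.

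This is where integrality of $p$ should enter. The mixing map is linear, and $\int |t|^p\,\dd(\text{projection})$ is proportional to $\int r^p\,\dd a(r)$. For integer $p$, the moments of order $\le p$ of the projection are polynomial in those of $a$. I would try to choose $b_R-a_R$ as a signed measure whose projected difference has vanishing moments up to order $p$ and lives at geometrically growing scales with geometrically decaying masses. Then, in each scale, the $\Wass_p$ cost of the projected difference is dominated by the moment cancellation and is small relative to the mass there, while in $\Rsp^2$ the displacement between the radial laws is of order one.

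Concretely, I would first test the simplest candidate. This takes $\nu_R$ radial, with mass split between radii $R$ and $2R$ with the same total $p$-th moment as $\mu_R$ at radius $R\cdot 2^{1/p}$-ish. I would compute the one-dimensional quantile difference explicitly to see whether it decays like a negative power of $R$. If it does not, I would iterate the construction over $\log R$ scales. Verifying this decay is the step I expect to be hardest.
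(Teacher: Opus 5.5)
\textbf{The rotation-invariant ansatz cannot work.} You leave Step 3 open, and it cannot be closed. Inside the class you fix in the ansatz (rotation-invariant $\mu_R,\nu_R$ differing only through their radial laws), the statement is false: there, $\SlicedWass_p\to 0$ forces $\Wass_p\to 0$ uniformly under $M_p=1$. To see this, encode a radial law $a$ with $\int r^p\,\dd a=1$ by the probability measure $\alpha$, the law of $\log r$ under $r^p\,\dd a(r)$. Let $d_{BL}$ be the bounded-Lipschitz distance (test functions bounded by $1$ and $1$-Lipschitz).

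\emph{Upper bound on $\Wass_p$.} Couple equal directions, then couple nearby log-radii, sending the mismatched mass to or from the origin. This gives $\Wass_p(\mu_R,\nu_R)^p\le \Wass_p(a_R,b_R)^p\le C_p\, d_{BL}(\alpha_R,\beta_R)$.

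\emph{Lower bound on $\SlicedWass_p$.} All projections coincide. Test $(\mu_R)_\theta-(\nu_R)_\theta$ against $f(t)=t_+^p\,\phi(\log t)$, which satisfies $|f(s)-f(t)|\le (p+1)\max(|s|,|t|)^{p-1}|s-t|$. This gives $d_{BL}(\alpha_R*k,\beta_R*k)\le C_p\,\SlicedWass_p(\mu_R,\nu_R)$. Here $k$ is the law of $\log s$ under $s_+^p\,\dd\lambda_0(s)$, and $\lambda_0$ is the law of $\sca{\sigma}{\theta}$ (the arcsine law, since you reduced to $d=2$).

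\emph{Deconvolution.} The symbol $\hat k(\xi)=\mathbb E[s_+^{p-i\xi}]=\frac{1}{2\sqrt\pi}\,\Gamma(\frac{p+1-i\xi}{2})/\Gamma(\frac{p+2-i\xi}{2})$ is smooth and never vanishes. So for a mollifier $g_\delta$ with compactly supported Fourier transform, you can write $\phi*g_\delta=\check k*(\phi*h_\delta)$ with $h_\delta\in L^1$. Hence $d_{BL}(\alpha,\beta)\le C\delta+\|h_\delta\|_{L^1}\,d_{BL}(\alpha*k,\beta*k)$. Chaining the three bounds, no multiscale choice of $a_R,b_R$ and no moment cancellation can help. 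The only oscillations in $\log r$ that the projection damps are high-frequency ones, and those already have small $d_{BL}$, hence small $\Wass_p$.

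\textbf{What is missing.} The discrepancy must sit in the angular variable; the radial variable is only used to create scale invariance. The paper takes a common radial profile $\rho_R=(1-m_R)\delta_0+\frac{\mathbf 1_{[1,R]}}{\log R}\frac{\dd r}{r^{p+1}}$, whose $p$-weighted law is uniform in $\log r$ on $[0,\log R]$. The angular laws are $\mu$ uniform and $\nu=(1+\varepsilon h)\mu$, with $h=x_1^{p+2}-\Pi x_1^{p+2}$ and $\Pi$ the $L^2$-projection onto polynomials of degree at most $p$.

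Each projection is then a mixture $\rho_R[\lambda]$ of dilations of $\rho_R$. Scale invariance makes it depend only on $M_{p,\pm}(\lambda)=C_p^+[\pi](\pm\theta)$, up to $O(1/\log R)$ in $\Wass_p^p$. Integrality of $p$ enters through $t_+^p=\frac12(t^p+s(t))$. The term $t^p$ is killed by orthogonality of $h$ to low-degree polynomials, and $s$ has parity opposite to $h$, so $C_p^+[\mu]=C_p^+[\nu]$.

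The lower bound on $\Wass_p$ comes from $p$-homogeneous extensions $f(x)=|x|^p g(x/|x|)$ of Lipschitz functions $g$ on the sphere. For these, $\int f\,\dd(\mu_R-\nu_R)=\int g\,\dd(\mu-\nu)$, which is independent of $R$.
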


Given an exponent $p\geq 1$ and $R>0$, we define a probability measure
$\rho_R$ on $\Rsp_+$ by
\begin{align*}
  &\rho_R = (1-m_R)\delta_0 + \mathbf{1}_{[1,R]} \frac{1}{\log(R)} \frac{\mathrm{Leb}}{r^{p+1}}, 
  \hbox{ where } m_R = \frac{1}{p \log(R)}\left(1 - \frac{1}{R^p}\right).
\end{align*}
With this choice, $\rho_R$ is a probability measure and its $p$-th moment
equals one. Given two probability measures $\mu,\nu$ on the unit
sphere, which we will choose later, we can use the map $(r,x)\in
\Rsp_+\times \Sph^{d-1}\mapsto rx\in\Rsp^d$ to construct a probability
measure over $\Rsp^d$, 
\begin{equation} \label{eq:muR-nuR}
  \mu_R = (r,x)\mapsto rx\vert_{\#}[\rho_R \otimes \mu], \qquad
  \nu_R = (r,x)\mapsto rx\vert_{\#}[\rho_R \otimes \nu]
\end{equation}
Here $\rho_R$ is the radial part and $\mu$ (or $\nu$) the angular part. Looking for measures 
decomposed according to a radial and an angular part is natural in view of the definition of the sliced-Wasserstein distance. We note that with this definition, $M_p(\mu_R) = M_p(\nu_R) =
M_p(\rho_R) = 1$. Theorem ~\ref{th:nonequiv-sw} will then follow directly from Lemmas \ref{l:distinctmunu}, \ref{l:siCp+egaux} and \ref{l:Cp+egaux}.

\begin{lemma}\label{l:distinctmunu}
Let $\mu,\nu$ be distinct probability measures on the sphere $\Sph^{d-1}$. Then 
$$\liminf_{R\to+\infty}\Wass_p(\mu_R,\nu_R) > 0.$$
\end{lemma}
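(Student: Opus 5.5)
Since $\mu\neq\nu$ on $\Sph^{d-1}$, I would pick a $1$-Lipschitz function $g:\Sph^{d-1}\to\Rsp$ (Lipschitz for the Euclidean distance of $\Rsp^d$) with $\int g\,\dd(\mu-\nu)=:\kappa>0$. I would then test the $1$-Wasserstein distance against the $1$-homogeneous extension $f(rx)=r\,g(x)$, or better a truncated version of it. The plan rests on two facts. First, $\Wass_p\geq\Wass_1$. Second, Kantorovich--Rubinstein duality gives $\Wass_1(\mu_R,\nu_R)\geq \int f\,\dd(\mu_R-\nu_R)/\Lip(f)$.

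To keep the Lipschitz constant uniform, I would use instead
$$f(y)=\min(|y|,1)\,g(y/|y|),$$
extended by $f(0)=0$. For $|y|\geq 1$ this equals $g(y/|y|)$, which is Lipschitz with constant at most $\Lip(g)$ there, because $y\mapsto y/|y|$ is $1$-Lipschitz outside the unit ball. Inside the unit ball it is the homogeneous extension, which is Lipschitz with constant bounded by $\Lip(g)+\sup|g|$. Hence $f$ is Lipschitz with a constant $C_g$ independent of $R$.

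Next I would compute the integral by Fubini. The atom of $\rho_R$ at $0$ contributes nothing since $f(0)=0$, and the absolutely continuous part of $\rho_R$ lives on $[1,R]$. This gives
$$\int f\,\dd(\mu_R-\nu_R)=\int_{[1,R]}\dd\rho_R(r)\int g\,\dd(\mu-\nu)=(\rho_R([1,R]))\,\kappa = m_R\,\kappa .$$
Here I expect the main obstacle: $m_R=\frac{1}{p\log R}(1-R^{-p})\to0$, so this test function only gives a lower bound of order $1/\log R$. Lipschitz test functions seeing $r\leq O(1)$ are too weak, because the mass far out is small. The $p$-th moment, however, is spread evenly over scales through the weight $r^{-p-1}$. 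So one must exploit the $\Wass_p$ cost at large radii, where points at radius $r$ must move a distance of order $r$ to change their angular distribution.

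I would therefore argue directly with $\Wass_p$. Take any coupling $\gamma$ of $\mu_R,\nu_R$ and define $h(y)=|y|\,g(y/|y|)$, which satisfies $|h(y)-h(z)|\leq C_g|y-z|$ on all of $\Rsp^d$. Set $\tilde h=h\cdot|y|^{p-1}$, i.e. $\tilde h(rx)=r^p g(x)$. Then
$$\int \tilde h\,\dd(\mu_R-\nu_R)=M_p(\rho_R)\,\kappa=\kappa,$$
which is independent of $R$. It remains to bound $|\tilde h(y)-\tilde h(z)|$ for the test function $\tilde h$ with $p$-growth. By the mean value estimate used in Lemma \ref{l:boundW1-qmoment},
$$|\tilde h(y)-\tilde h(z)|\leq C|y-z|\,(|y|+|z|)^{p-1}.$$
Hölder's inequality with exponents $p$ and $p/(p-1)$ then yields
$$\kappa\leq C\,\nr{y-z}_{\LL^p(\gamma)}\,\big(M_p(\mu_R)+M_p(\nu_R)\big)^{(p-1)/p}\leq C'\,\nr{y-z}_{\LL^p(\gamma)}.$$
Taking the infimum over couplings $\gamma$ gives $\Wass_p(\mu_R,\nu_R)\geq \kappa/C'>0$ uniformly in $R$, which proves the lemma. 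The only care needed is the Lipschitz estimate for $\tilde h$ near the origin and across different radii, which follows from $g$ being bounded and Lipschitz.
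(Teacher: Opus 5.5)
Your final argument is correct and is essentially the paper's proof. You test against the $p$-homogeneous extension $r^p g(x)$, whose integral against $\mu_R-\nu_R$ equals $\int g\,\dd(\mu-\nu)$ because $M_p(\rho_R)=1$; you bound its increments by $C|y-z|(|y|+|z|)^{p-1}$ and close with Hölder using $M_p(\mu_R)=M_p(\nu_R)=1$. The paper takes $g\in\mathcal C^1$ and integrates a gradient bound, whereas your direct difference estimate for merely Lipschitz bounded $g$ works equally well, and the truncated first attempt can simply be dropped.
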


\begin{proof} 
Let $g \in\mathcal{C}^1(\Sph^{d-1})$ be $1$-Lipschitz and assume that
$g(x_0) = 0$ for some point $x_0\in\Sph^{d-1}$. We consider its
$p$-homogeneous extension $f: \Rsp^d\to\Rsp$; precisely, $f(0) = 0$
and $f(x) = |x|^p g(x/|x|)$ for $x\neq0$. Then $f$ has class
$\mathcal{C}^1$ on $\Rsp^d\setminus\{0\}$ and has gradient
$$ \nabla f(x) = |x|^{p-2} x\, g(x/|x|) + |x|^{p-1}\nabla
g(x/|x|), $$ where $\nabla g$ is the tangential gradient.  Hence,
$|\nabla f(x)|\leq (2p+1)|x|^{p-1}$. If $x,y$ are two points
such that the segment $[x,y]$ does not pass through the origin, we get
by integration,
$$ \abs{f(x)-f(y)} \leq (2p+1) \max(|x|,|y|)^{p-1}|x-y|, $$
and by continuity of $f$ the same inequality holds for all
$x,y\in\Rsp^d$.  Let $\gamma$ be an optimal transport plan between the
two measures $\mu_R$ and $\nu_R$ for the cost $c_p$. H\"older's
inequality  gives us
\begin{align*}
\abs{\int f\dd(\mu_R-\nu_R)}
&\leq (2p+1) \int \max(|x|,|y|)^{p-1}|x-y|\dd\gamma(x,y) \\
&\leq (2p+1) \Big(\int (|x|^p+|y|^p)\dd\gamma\Big)^{\frac{p-1}{p}} \Wass_p(\mu_R,\nu_R)\\
&= (2p+1) \, (M_p(\mu_R) + M_p(\nu_R))^{1-\frac1p}\, \Wass_p(\mu_R,\nu_R),\\
&\leq 2 (2p+1)  \Wass_p(\mu_R,\nu_R)
\end{align*}
On the other hand, by homogeneity and using Fubini,
\[
\int f\dd(\mu_R -\nu_R)
= \int_{\Rsp_+} r^p\dd\rho_R(r)\,\int g\dd(\mu-\nu)
= \int g\dd(\mu-\nu).
\]
Combining the two equations
above with the Kantorovich-Rubinstein duality formula we get
\[\Wass_1(\mu,\nu) = \sup_{g \in \mathcal{C}^1(\Sph^{d-1}),~\Lip(g)\leq 1} \int g\dd(\mu-\nu) \leq 2(2p+1)  \Wass_p(\mu_R,\nu_R).\]
This proves that $\mu_{R}$ and $\nu_R$ remain at positive $\Wass_p$
distance.
\end{proof}


The following lemma will be useful to control the distance between the projections of $\mu_R$ and $\nu_R$ in a direction $\theta$. Given a probabily measure $\lambda$ on $\Rsp$ and $p\geq 1$, we will denote
$$M_{p,\pm}(\lambda) = \int a_\pm^p \dd\lambda(a) $$
where $a_+ = \max(a,0)$ and $a_- = -\min(a,0)$ denote the positive and negative parts of $a$. Finally, given $a\in\Rsp$, we let  $D_a: \Rsp\to\Rsp: x \mapsto ax$ be the dilation by $a$.
\begin{lemma} Given a measure $\lambda\in\Prob([-1,1])$, set 
$\rho_R[\lambda]:=
\int_{[0,1]} (D_a)_\#\rho_R\,d\lambda(a)$. Then, 
$$ \max \bigg\{ \Wass_p^p(\rho_R[\lambda], \rho_R[\lambda']) \mid \lambda,\lambda' \in\Prob([-1,1]), M_{p,\pm}(\lambda) = M_{p,\pm}(\lambda')\bigg\}\leq \frac{c_p}{\log(R)} $$
\end{lemma}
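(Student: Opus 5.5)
The plan is to avoid computing any optimal transport. I would bound $\Wass_p^p$ by a weighted total variation, namely the $\abs{s}^p$-weighted mass of $\rho_R[\lambda]-\rho_R[\lambda']$. I would then compute the densities of the two mixtures explicitly and observe that, on the bulk region $[1,R]$, they depend on $\lambda$ only through $M_{p,\pm}(\lambda)$, up to a small error. I read the mixture as $\int_{[-1,1]}(D_a)_\#\rho_R\,d\lambda(a)$, since $\lambda\in\Prob([-1,1])$.

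\emph{Step 1 (reduction).} Two probability measures $\alpha,\alpha'$ on $\Rsp$ have signed difference $\alpha-\alpha'=\tau_+-\tau_-$ with $\tau_+(\Rsp)=\tau_-(\Rsp)$. Keeping the common part $\alpha\wedge\alpha'$ in place and coupling $\tau_+$ with $\tau_-$ by the product coupling (normalized), together with $\abs{x-y}^p\leq 2^{p-1}(\abs{x}^p+\abs{y}^p)$, gives
$$\Wass_p^p(\alpha,\alpha')\leq 2^{p-1}\int \abs{s}^p\,\dd\abs{\alpha-\alpha'}(s).$$
The atoms at $0$ carry zero weight here, so it suffices to bound the $\abs{s}^p$-weighted total variation of the absolutely continuous parts. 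By symmetry I treat only the positive half-line, which only sees $a>0$; the case $a<0$ is identical with $a_-$.

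\emph{Step 2 (densities).} The map $D_a$ pushes $\mathbf 1_{[1,R]}\frac{\dd r}{\log R\, r^{p+1}}$ to $\frac{a^p}{\log R}\,s^{-p-1}\mathbf 1_{[a,aR]}(s)\,\dd s$ for $a>0$. Hence on $(0,\infty)$ the density of $\rho_R[\lambda]$ is
$$\frac{s^{-p-1}}{\log R}\int \mathbf 1_{a\leq s\leq aR}\,a_+^p\,\dd\lambda(a).$$
For $s\in[1,R]$ the constraint $a\leq s$ is automatic. The density is therefore $\frac{s^{-p-1}}{\log R}\big(M_{p,+}(\lambda)-\int_{a<s/R}a_+^p\,\dd\lambda\big)$. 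The main term cancels between $\lambda$ and $\lambda'$, so $s^p$ times the difference is at most $\frac{1}{\log R}s^{-1}(s/R)^p$. Integrating over $[1,R]$ gives a contribution of at most $\frac{1}{p\log R}$. For $s>R$ the density vanishes since $a\leq1$.

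\emph{Step 3 (the region $s<1$).} This is the step I expect to be the main obstacle. There is no cancellation there, and a crude estimate only yields $\int_{[s/R,s]}$, whose weight $\int_{a}^{aR}\dd s/s=\log R$ exactly kills the prefactor, leaving an $O(1)$ bound. The fix is to use the restriction $s<1$ as well. Bounding each measure separately and applying Fubini,
$$\int_0^1 s^p\cdot\frac{s^{-p-1}}{\log R}\int_{s/R\leq a\leq s}a^p\,\dd\lambda\,\dd s\leq\frac{1}{\log R}\int a^p\log(1/a)\,\dd\lambda(a).$$
Here the $s$-integral runs only over $[a,\min(aR,1)]$. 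Since $\sup_{a\in(0,1]}a^p\log(1/a)=\frac{1}{ep}$, this contribution is at most $\frac{c_p}{\log R}$ for each of $\lambda$ and $\lambda'$.

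Summing Steps 2 and 3 over both half-lines and inserting the result into Step 1 gives $\Wass_p^p(\rho_R[\lambda],\rho_R[\lambda'])\leq \frac{c_p}{\log R}$. This bound is uniform over all pairs with $M_{p,\pm}(\lambda)=M_{p,\pm}(\lambda')$, which is the claimed estimate on the supremum.
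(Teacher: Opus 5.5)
Your proof is correct and follows essentially the paper's route. You reduce to the $|s|^p$-weighted total variation of $\rho_R[\lambda]-\rho_R[\lambda']$ via $W_p^p\leq 2^{p-1}\int|s|^p\,\mathrm{d}|\alpha-\alpha'|$, which the paper quotes from Villani's Theorem 6.15 and you reprove by the standard coupling. You then cancel the bulk contribution on $[1,R]$ using $M_{p,\pm}(\lambda)=M_{p,\pm}(\lambda')$, and show that the leftover pieces, coming from $s\in[|a|,1]$ and $s\in[|a|R,R]$, are $O(1/\log R)$ thanks to the factor $|a|^p$.

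The only difference is bookkeeping. You split explicit densities into the regions $s<1$ and $s\in[1,R]$. The paper instead tests against arbitrary $\phi$ with $|\phi(s)|\leq|s|^p$ and, for each $a$, subtracts the reference integral $\int_1^R\phi(\mathrm{sgn}(a)s)\,s^{-p-1}\,\mathrm{d}s$; this has the same integral against $|a|^p\,\mathrm{d}\lambda$ and $|a|^p\,\mathrm{d}\lambda'$, and the paper bounds the discrepancy by $2|a|^p|\log|a||$.
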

\begin{proof}
Let $\lambda,\lambda'$ be as in the statement. Let
$\phi:\Rsp_+\to\Rsp$ be measurable, with $\phi(0) = 0$ and $\abs{\phi(s)} \leq s^p$.
For $a\neq 0$, the change of variable \(a\in(0,1]\), the change of variables \(s=\abs{a}r\) gives
$$\int\phi \dd(D_{a\#}\rho_R) = \int \phi(ar) \dd\rho_R = \frac{\abs{a}}{\log(R)} \int_{\abs{a}}^{\abs{a}R} \phi(\sgn(a)s)\frac{\dd s}{s^{p+1}}$$
Integrating this inequality against $\lambda$, we obtain
\[
\int \phi\, \dd \rho_R[\lambda] 
= \frac{1}{\log R} \int_{[-1,1]} 
\abs{a}^p \left( \int_{\abs{a}}^{\abs{a}R}\phi(\sgn(a) s)\frac{ds}{s^{p+1}}\right)
\dd\lambda(a),
\]
Using $M_{p,\pm}(\lambda) = M_{p,\pm}(\lambda')$, this implies
\begin{align*}
    \int \phi\, \dd (\rho_R[\lambda] - \rho_R[\lambda']) 
    &= \frac{1}{\log R} \int_{[-1,1]} \abs{a}^p \left( \int_{\abs{a}}^{\abs{a}R}\phi(\sgn(a)s)\frac{ds}{s^{p+1}}\right) 
\dd(\lambda-\lambda')(a)\\
    &= \frac{1}{\log R} \int_{[-1,1]} \abs{a}^p \left( \int_{\abs{a}}^{\abs{a}R}\phi(\sgn(a)s)\frac{ds}{s^{p+1}} - \int_1^{R}\phi(\sgn(a)s)\frac{ds}{s^{p+1}}\right) 
\dd(\lambda-\lambda')(a).
\end{align*}
For every $a\neq0$, using $\abs{\phi(s)}\leq \abs{s}^p$,
$$
\left|\int_{\abs{a}}^{\abs{a}R}\phi(\sgn(a)s)\frac{ds}{s^{p+1}} - \int_1^R\phi(\sgn(a)s)\frac{ds}{s^{p+1}}\right|
\le \int_{\abs{a}}^1 \frac{ds}{s} + \int_{\abs{a}R}^R \frac{ds}{s} = 2|\log a|.
$$
The two previous inequalities imply, letting $c_p = \sup_{0<a\leq 1} a^p \abs{\log a} < +\infty$,
$$ \abs{\int\phi\,d\bigl(\rho_R[\lambda]-\rho_R[\lambda']\bigr)} \leq \frac{4c_p}{\log R}.$$
Let $\chi$ be the density of the measure $\rho_R[\lambda] - \rho_R[\lambda']$ with respect to the total variation measure $\abs{\rho_R[\lambda] - \rho_R[\lambda']}$ and let $\phi(s)=s^p\chi(s)$. We obtain
$$\int_{\Rsp_+} s^p \dd\abs{\rho_R[\lambda]-\rho_R[\lambda']}(s) \leq \frac{4c_p}{\log R}.$$
By Theorem 6.15 in \cite{villani2009optimal}, this inequality yields the desired  bound on the $\Wass_p$ distance:
\begin{equation*} 
\Wass_p^p(\rho_R[\lambda], \rho_R[\lambda']) \leq 2^{p-1} \int \abs{s}^p \dd\abs{\rho_R[\lambda]-\rho_R[\lambda']} \leq \frac{2^{p+1} c_p}{\log(R)}. \qedhere
\end{equation*}
\end{proof}

Given a measure $\mu\in\Prob(\Sph^{d-1})$ and $p\geq 1$, we define a
Lipschitz function $C_p^+[\mu]$ on $\Sph^{d-1}$ by
$$ C_p^+[\mu](\theta) := \int \sca{\sigma}{\theta}_+^p
\dd\mu(\sigma),$$ where $t_+ = \max(t,0)$ denotes as usual the
positive part of $t\in\Rsp$. This is closely related to the
\emph{spherical $p$-cosine transform} in convex geometry, which is
defined with the same formula with an absolute value instead of the
positive part.

\begin{lemma} \label{l:siCp+egaux} Let $p\geq 1$ and assume that $\mu,\nu \in\Prob(\Sph^{d-1})$ satisfy
  $C_p^+[\mu] = C_p^+[\nu]$. Then,
\[
  \SlicedWass_p^p(\mu_R,\nu_R) \leq 
  \sup_{\theta\in\Sph^{d-1}}
  W_p^p\bigl((\theta\cdot)_\#\mu_R,(\theta\cdot)_\#\nu_R\bigr)
  \underset{R\rightarrow +\infty}{\longrightarrow}0.
\]
\end{lemma}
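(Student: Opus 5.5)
The plan is to reduce everything to the unnamed lemma just above, which bounds $\Wass_p^p(\rho_R[\lambda],\rho_R[\lambda'])$ by $c_p/\log R$ whenever $M_{p,\pm}(\lambda)=M_{p,\pm}(\lambda')$. The first inequality, $\SlicedWass_p^p(\mu_R,\nu_R)\leq\sup_\theta \Wass_p^p((\theta\cdot)_\#\mu_R,(\theta\cdot)_\#\nu_R)$, is immediate: $\SlicedWass_p^p$ is the average over the uniform probability $\dd\theta$ of these quantities. So it suffices to show that the supremum tends to zero.

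Fix $\theta\in\Sph^{d-1}$. By definition \eqref{eq:muR-nuR}, $(\theta\cdot)_\#\mu_R$ is the image of $\rho_R\otimes\mu$ under $(r,x)\mapsto r\sca{x}{\theta}$. Let $\lambda_\theta:=(\sca{\cdot}{\theta})_\#\mu\in\Prob([-1,1])$ and define $\lambda'_\theta$ analogously from $\nu$. Disintegrating in the angular variable then gives
\[
(\theta\cdot)_\#\mu_R=\int_{[-1,1]}(D_a)_\#\rho_R\,\dd\lambda_\theta(a)=\rho_R[\lambda_\theta],
\]
and similarly $(\theta\cdot)_\#\nu_R=\rho_R[\lambda'_\theta]$. (The lemma writes the integral over $[0,1]$, but its proof handles $a\in[-1,1]$ through $\sgn(a)$, so I would read it with $[-1,1]$.)

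It remains to check the moment hypotheses. We have
\[
M_{p,+}(\lambda_\theta)=\int\sca{\sigma}{\theta}_+^p\,\dd\mu(\sigma)=C_p^+[\mu](\theta),
\]
and $M_{p,-}(\lambda_\theta)=\int\sca{\sigma}{-\theta}_+^p\,\dd\mu(\sigma)=C_p^+[\mu](-\theta)$. The assumption $C_p^+[\mu]=C_p^+[\nu]$ on the whole sphere, applied at both $\theta$ and $-\theta$, therefore gives $M_{p,\pm}(\lambda_\theta)=M_{p,\pm}(\lambda'_\theta)$. The preceding lemma then yields $\Wass_p^p((\theta\cdot)_\#\mu_R,(\theta\cdot)_\#\nu_R)\leq c_p/\log R$. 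This bound is uniform in $\theta$, since that lemma takes a maximum over all admissible pairs $\lambda,\lambda'$. Letting $R\to+\infty$ concludes.

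The only delicate point is the disintegration identity, in particular the atom at $r=0$ and the sign of $a$. The atom maps to $\delta_0$ regardless of $a$, and the lemma's formula treats negative $a$ via $\sgn(a)$. So I expect no real obstacle. The substantive difficulty, namely constructing distinct $\mu\neq\nu$ with $C_p^+[\mu]=C_p^+[\nu]$, is deferred to Lemma \ref{l:Cp+egaux}.
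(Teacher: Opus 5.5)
Your proof is correct and matches the paper's argument step for step: you write $(\theta\cdot)_\#\mu_R$ as the mixture $\rho_R[P_{\theta\#}\mu]$, match $M_{p,\pm}$ through $C_p^+[\cdot](\pm\theta)$, and apply the preceding lemma's bound $c_p/\log R$, which is uniform in $\theta$. You are right to read that lemma's integration domain as $[-1,1]$, and your handling of the $p$-th powers is cleaner than the paper's final display, which drops the exponent $1/p$.
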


\begin{proof}
Let $\pi \in \{\mu,\nu\}$ be one of the two measures on $\Sph^{d-1}$ and let
$\pi_R\in\{\mu_R,\nu_R\}$ be the corresponding measure on
$\Rsp^d$. By definition of the pushforwards, sampling from $\pi_{R,\theta}$ amounts to sampling a radius $r\sim \rho_R$, sampling a direction $\sigma \sim \pi$, and then considering $\sca{\sigma}{\theta} r$. This shows that $\pi_{R,\theta}$ is a mixture of dilations of $\rho_R$. More explicitely,
$$ \pi_{R,\theta} = \rho_R[\lambda_{\pi,\theta}], \hbox{ where } \lambda_{\pi,\theta} = P_{\theta\#} \pi.$$
One may check that $M_{p,+}(\lambda_{\pi,\theta}) = C_p^+[\pi](\theta)$ and $M_{p,-}(\lambda_{\pi,\theta}) = C_p^+[\pi](-\theta)$. By assumption, $C_p^+[\mu] = C_p^+[\nu]$, and we may therefore apply the previous lemma to get
$$ \Wass_p^p(\mu_{R,\theta}, \nu_{R,\theta}) \leq \frac{c_p}{\log(R)}.$$
This directly implies that 
$$ 0 \leq  \SlicedWass_p(\mu_R,\nu_R) \leq \max_{\theta\in\Sph^{d-1}} \Wass_p(\mu_{R,\theta},\nu_{R,\theta}) \leq \frac{c_p}{\log(R)}, $$
so that $\lim_{R\to+\infty}\SlicedWass_p(\mu_R,\nu_R) = 0$.
\end{proof}

\begin{lemma}\label{l:Cp+egaux}
Let $d\geq 2$, let $p\geq 1$ be an integer, and let $\mu$ be the uniform probability
measure on $\Sph^{d-1}$. Then there exists $\nu\in\Prob(\Sph^{d-1})\setminus\{\mu\}$ such that
$C_p^+[\mu] = C_p^+[\nu]$.
\end{lemma}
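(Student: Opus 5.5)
The plan is to take $\nu$ as a small perturbation of the uniform measure $\mu$ by a spherical harmonic, $\dd\nu=(1+\e h)\dd\mu$, and to show via the Funk--Hecke formula that $C_p^+$ does not see $h$. Here $h$ is a spherical harmonic of odd degree $k>p$, and $\e>0$ is small enough that $1+\e h>0$ on $\Sph^{d-1}$. Such an $h$ exists for every $d\geq 2$: for instance the restriction to $\Sph^{d-1}$ of the harmonic polynomial $\mathrm{Re}\,(x_1+ix_2)^k$, which is nonzero. Since $\int h\dd\mu=0$ (as $k\geq 1$), $\nu$ is a probability measure, and $\nu\neq\mu$.

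By linearity,
$$C_p^+[\nu](\theta)=C_p^+[\mu](\theta)+\e\int \sca{\sigma}{\theta}_+^p\,h(\sigma)\dd\mu(\sigma).$$
The Funk--Hecke formula then gives, for any bounded measurable $g$ on $[-1,1]$ and any spherical harmonic $h$ of degree $k$,
$$\int g(\sca{\sigma}{\theta})h(\sigma)\dd\mu(\sigma)=\lambda_k(g)\,h(\theta),$$
where
$$\lambda_k(g)=c_{d,k}\int_{-1}^1 g(t)P_k(t)(1-t^2)^{\frac{d-3}{2}}\dd t$$
and $P_k$ is the Gegenbauer (for $d=2$, Chebyshev) polynomial of degree $k$. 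These polynomials are orthogonal on $[-1,1]$ for the weight $(1-t^2)^{\frac{d-3}{2}}$ and have the parity of $k$. It therefore suffices to prove that $\lambda_k(t_+^p)=0$ when $k$ is odd and $k>p$.

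For this I use the decomposition $t_+^p=\tfrac12\left(|t|^p+t^p\right)$.
\begin{itemize}
\item The function $|t|^p$ is even while $P_k$ is odd, and the weight is even, so $\int_{-1}^1 |t|^pP_k(t)(1-t^2)^{\frac{d-3}{2}}\dd t=0$.
\item Because $p$ is an integer, $t^p$ is a polynomial of degree $p<k$, so it is orthogonal to $P_k$ for the same weight, and this part vanishes as well.
\end{itemize}
Hence $\lambda_k(t_+^p)=0$, which gives $C_p^+[\nu]=C_p^+[\mu]$.

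The only delicate point is the integrality of $p$. It is used exactly once, to make $t^p$ a polynomial so that orthogonality of $P_k$ to lower-degree polynomials applies. For non-integer $p$ the odd part $\sgn(t)|t|^p$ generally has nonzero projections on all odd $P_k$, which is consistent with the paper leaving that case open. I would also double-check the convention of the Funk--Hecke formula in the case $d=2$, where the weight $(1-t^2)^{-1/2}$ is singular but still integrable. Alternatively, in that case I would verify the claim directly with Fourier series: with $h=\cos(k\phi)$ one needs $\int_0^{2\pi}\cos_+^p(\phi)\cos(k\phi)\dd\phi=0$, which follows from the same parity and degree argument after expanding $\cos^p$ in harmonics of degree at most $p$.
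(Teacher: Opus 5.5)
Your argument is correct for odd $p$, but it fails for even $p$, and there the $\nu$ you construct does not satisfy the conclusion. The problem is the identity $t_+^p=\tfrac12(|t|^p+t^p)$, which holds only for odd $p$. For even $p$ we have $|t|^p=t^p$, so the right-hand side equals $t^p\neq t_+^p$ for $t<0$. The correct splitting is $t_+^p=\tfrac12\bigl(t^p+\sgn(t)|t|^p\bigr)$. Its non-polynomial part is $|t|^p$ (even) for odd $p$, but $\sgn(t)|t|^p=|t|^{p-1}t$ (odd) for even $p$. In the even case, paired with an odd $P_k$ this part gives an even integrand, so nothing cancels: $\lambda_k(t_+^p)=c_{d,k}\int_0^1 t^pP_k(t)(1-t^2)^{\frac{d-3}{2}}\dd t$, which is typically nonzero. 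For instance, with $d=3$, $p=2$, $k=3$ and $P_3(t)=\tfrac12(5t^3-3t)$, one gets $\int_0^1 t^2P_3(t)\dd t=\tfrac1{24}$. With $d=2$, $p=2$, $k=3$, one gets $\int_{-\pi/2}^{\pi/2}\cos^2\phi\,\cos(3\phi)\dd\phi=\tfrac4{15}$. Hence $C_2^+[\nu]-C_2^+[\mu]=\e\,\lambda_3(t_+^2)\,h\neq0$, and your fallback Fourier check for $d=2$ has the same flaw.

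The fix is to take $k>p$ with the same parity as $p$, e.g.\ $k=p+2$ and $h=\mathrm{Re}\,(x_1+ix_2)^{p+2}$. The non-polynomial part of $t_+^p$ has parity $(-1)^{p+1}$, opposite to that of $P_k$, so it vanishes by parity, while $t^p$ vanishes by degree. This is exactly the paper's choice: $h=x_1^{p+2}-\Pi x_1^{p+2}$ is the degree-$(p+2)$ harmonic component of $x_1^{p+2}$ and satisfies $h\circ(-\Id)=(-1)^ph$. The paper also avoids Funk--Hecke entirely. Orthogonality of $h$ to polynomials of degree $\leq p$ in $\LL^2(\mu)$ kills $\sigma\mapsto\sca{\theta}{\sigma}^p$. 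The antipodal symmetry ($Sh=(-1)^ph$, $Ss_\theta=(-1)^{p+1}s_\theta$) kills the remaining part, so no eigenvalue computation is needed.
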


\begin{proof}
Let $\mathcal{P}_p\subseteq \LL^2(\mu)$ be the space of restrictions
of real polynomials of degree $\leq p$ to the unit sphere, and let
$\Pi$ be the orthogonal projection onto $\mathcal{P}_p$. Define $q(x)
= x_1^{p+2}$ and let $h = q - \Pi q$. Since $q$ does not belong to $\mathcal{P}_p$, the function $h$ is nonzero.
The operator
$$ S: f \in \LL^2(\mu)\mapsto f\circ (-\Id) \in \LL^2(\mu)$$ is an
isometry of $\LL^2(\mu)$ which leaves $\mathcal{P}_p$ invariant and commutes with $\Pi$. Since
$Sq = (-1)^{p+2} q$, we deduce that $Sh = (-1)^p h$.  We observe that $t_+^p =
\frac12(t^p + s(t))$, with $s(t) = \abs{t}^{p-1}t$ if $p$ is even
and $s(t) = \abs{t}^p$ if $p$ is odd. Also, for any
$\theta\in\Sph^{d-1}$, $S s_\theta = (-1)^{p+1} s_\theta$, where
$s_\theta(\sigma) = s(\sca{\theta}{\sigma})$.  Then, by orthogonality
between $h$ and $\mathcal{P}_p$ and, one has for all $\theta
\in\Sph^{d-1}$,
\begin{align*}
  C_p^+[h\mu](\theta) &= \int \sca{\theta}{\sigma}_+^p h(\sigma) \dd\mu(\sigma)
  = \frac{1}{2}\int s_\theta h \dd \mu \\
  &= \frac{1}{2}\int S s_\theta S h \dd\mu 
  = \frac{1}{2}\int (-1)^{p+1}s_\theta  (-1)^p h \dd\sigma = - \frac{1}{2}\int s_\theta h \dd \mu 
  = 0,
\end{align*}
where the third equality follows from the application of the operator
$S$ to both functions.  This implies that $C_p^+[h\mu] = 0$. For
$\varepsilon>0$ small enough, namely $\varepsilon \leq
\nr{h}_\infty^{-1}$, the measure $\nu = (1+\varepsilon h)\mu$ is
non-negative and distinct from $\mu$. Since the constant function $1$
belongs to $\mathcal{P}_p$, the orthogonality between $h$ and
$\mathcal{P}_p$ implies that $\nu$ is a probability measure. Finally,
$C_p^+[\nu] = C_p^+[\mu + \varepsilon h\mu] = C_p^+[\mu]$.
\end{proof}

\begin{proof}[Proof of \Cref{th:nonequiv-sw}] Since $p\geq 1$ and $p\in\mathbb{N}$, by \Cref{l:Cp+egaux},  
there exists $\nu\in\Prob(\Sph^{d-1})\setminus\{\mu\}$ such that $C_p^+[\mu] = C_p^+[\nu]$. Since $\mu\neq \nu$,  \Cref{l:distinctmunu} guarantees that $\liminf_{R\to+\infty} \Wass_p(\mu_R,\nu_R)>0$ while \Cref{l:siCp+egaux} implies that $\lim_{R\to+\infty}\SlicedWass_p(\mu_R,\nu_R) = 0$, as desired.
\end{proof}

\section{Proofs for kernel mean embeddings}\label{s:mmd}

\begin{proof}[Proof of Theorem \ref{t:MMDW1}]
We first observe that proving the case $p=1$ is sufficient: indeed, once this case is proven, the case $p>1$ follows by applying Lemma \ref{l:wassboundedmoments} with $p=1$. In the sequel, we assume therefore that $p=1$.

We notice that \eqref{e:W1smallerkalpha} is directly given by \cite[Proposition 17]{modeste}. Regarding \eqref{e:kalphasmallerW1}, we rely on \cite[Lemma 20]{modeste} which says that for all $\mu,\nu$ with finite $\beta$-th moment, there holds
$$
\|K_\mu-K_\nu\|_{\mathcal{H}_{\kappa_\beta}}^2\leq (M_\beta(\mu)+M_\beta(\nu))W_\beta(\mu,\nu).
$$
Let $\mu,\nu\in\mathcal{M}_{1,R}$. We have
$$
M_\beta(\mu)=\int_{\R^d}|y|^\beta d\mu(y)\leq 1+\int_{\R^d}|y|d\mu(y)\leq R^r+1
$$
and similarly $M_\beta(\nu)\leq R^q+1$. Now, $W_\beta\leq W_1^\beta$ by Hölder's inequality, hence finally 
$$
\|K_\mu-K_\nu\|_{\mathcal{H}_{\kappa_\beta}}\leq (2R^q+2)^{\frac12}W_1(\mu,\nu)^{\frac{\beta}{2}}
$$
which concludes the proof.
\end{proof}


\begin{proof}[Proof of Proposition \ref{p:nouniformMMD}]
Let $e\in\R^d$, $|e|=1$. We consider the case $R=1$, the general case follows easily. Set $\mu_n=\frac{1}{2n^p} (\delta_{ne}+\delta_{-ne})+\left(1-\frac{1}{n^p}\right)\delta_0$ and $\mu=\delta_0$. Note that these measures have first moment equal to $1$. We show that $d_\beta(\mu_n,\mu)\rightarrow0$. We have $\kappa_\beta(0,\cdot)=0$ and $\kappa_\beta(ne,ne)=\kappa_\beta(-ne,-ne)=2n^{2\beta}$ and $\kappa_\beta(ne,-ne)=(2-2^{2\beta})n^{2\beta}$ hence
$$
\|K_{\mu_n}-K_\mu\|_{\mathcal{H}_{\kappa_\beta}}^{2p}=\frac{1}{2n^p}[\kappa_\beta(ne,ne)+\kappa_\beta(-ne,-ne)+2\kappa_\beta(ne,-ne)]=(4-2^{2\beta})n^{2\beta-2p}.
$$
Since $W_p(\mu_n,\mu)=1$ and $\beta<1\leq p$ we get the result. 
\end{proof}

\appendix

\section{Stability of $q$-cost optimal transport maps} \label{s:stabqcost}

Let $\rho$ be an absolutely continuous probability measure, whose density is bounded from above and below on its convex and compact support. Given  $p>1$ and $\mu\in\Prob_p(\Rsp^d)$, we denote $\phi_{\rho\to\mu,p}$ the $c_p$-concave Kantorovich potential on the side of $\rho$ that has zero $\rho$-mean. We denote as before $T_{\rho\to\mu,p}$  the induced transport map and  $\psi_{\rho\to\mu}$ the $c_p$-transform of $\phi_{\rho\to\mu,p}$ introduced in \eqref{e:cptransform}.  We show the following result, which implies \eqref{e:mischlertrevisan}.


\begin{theorem} \label{t:mt}
Let $d\geq 1$ and let $\rho$ be a probability measure on $\R^d$ with bounded and convex support $\Omega$, absolutely continuous with respect to the Lebesgue measure, with density bounded from above and below by strictly positive constants. 
Let $p>1$, let $\mu,\nu\in\Prob(B(0,R))$. Then, 
$$ \|T_{\rho\rightarrow\mu,p}-T_{\rho\rightarrow\nu,p}\|_{L^2(\rho)} \leq 
C R^{1-(p-1)\delta} \left(\int (\psi_{\rho\to\mu,p} - \psi_{\rho\to\nu,p}) \dd(\nu-\mu)\right)^\delta. $$
When $p\geq 2$, one may take $\delta=\frac{1}{6(p-1)}$, and $C = C(p,\rho)$. When $p\in(1,2)$, for any $\theta\in (0,(p-1)/(p+1))$ one may take $\delta = \theta(1- 1/p)$ and $C = C(p,\rho,\theta)$.
\end{theorem}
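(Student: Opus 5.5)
We plan to follow the strategy of Mischler--Trevisan \cite{mischlertrevisan}, tracking the dependence on $R$ by scaling. The argument passes through the dual potentials and uses strong convexity/concavity of the cost $c_p$ on bounded sets. Write $T_\mu=T_{\rho\to\mu,p}$, $\phi_\mu=\phi_{\rho\to\mu,p}$, $\psi_\mu=\psi_{\rho\to\mu,p}$, and set
$$\Delta:=\int(\psi_\mu-\psi_\nu)\dd(\nu-\mu).$$
The first step is a Bregman-type identity. By the $c_p$-duality $\phi_\mu(x)+\psi_\mu(y)\le |x-y|^p$, with equality at $y=T_\mu(x)$, and using $\mu=T_{\mu\#}\rho$, $\nu=T_{\nu\#}\rho$, we get
$$\Delta=\int \big[\,|x-T_\nu(x)|^p-\phi_\mu(x)-\psi_\mu(T_\nu(x))\,\big]\dd\rho(x)+\int \big[\,|x-T_\mu(x)|^p-\phi_\nu(x)-\psi_\nu(T_\mu(x))\,\big]\dd\rho(x).$$
Both integrands are nonnegative. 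Each integrand is a ``$c_p$-Fenchel--Young gap'' $G_\mu(x,T_\nu(x))\ge 0$, vanishing exactly when $T_\nu(x)=T_\mu(x)$. This is the analogue of \eqref{e:bregmankantofenchel}.

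The second step is a pointwise or local lower bound on the gap in terms of $|T_\mu(x)-T_\nu(x)|$. For fixed $x$, the function $y\mapsto |x-y|^p-\psi_\mu(y)$ is minimized at $T_\mu(x)$. Since $\phi_\mu$ is $c_p$-concave with $x\in\Omega$ and $|y|\le R$, the relevant second-order behaviour of $c_p$ on the region $\{|x-y|\lesssim R\}$ is controlled by $|x-y|^{p-2}$. This gives bounds of the type $\nabla^2 c_p\gtrsim |x-y|^{p-2}$ when $p\ge2$ only away from the diagonal, and $\lesssim R^{p-2}$ when $p<2$. A direct pointwise quadratic lower bound is not available, because $\psi_\mu$ is only semiconcave. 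Following \cite{mischlertrevisan}, I would instead integrate in $x$ over $\Omega$, using the lower and upper density bounds of $\rho$ and the convexity of $\Omega$. The aim is an interpolation inequality of the form
$$\|T_\mu-T_\nu\|_{L^2(\rho)}^{2}\ \lesssim\ \|\nabla\phi_\mu-\nabla\phi_\nu\|_{L^2(\rho)}^{a}\cdot(\text{bounds})\ \lesssim\ \Delta^{2\delta}\cdot(\text{powers of }R).$$
The chain goes as follows. First, $T=x-|\nabla\phi|^{\frac{1}{p-1}-1}\nabla\phi/p^{1/(p-1)}$ up to constants, so the map $\nabla\phi\mapsto T$ is Hölder with exponent $\min(1,\frac1{p-1})$; this explains the factor $\frac1{6(p-1)}$ when $p\ge2$, and the loss $\theta(1-1/p)$ when $p<2$. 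Second, $\|\nabla\phi_\mu-\nabla\phi_\nu\|_{L^2}$ is bounded by a Gagliardo--Nirenberg/convexity estimate on $\Omega$ in terms of $\|\phi_\mu-\phi_\nu\|_{L^1}$ and the semiconcavity bounds $D^2\phi\le C R^{p-2}$ (or $C\,\mathrm{diam}^{p-2}$). Third, $\|\phi_\mu-\phi_\nu\|$ is bounded by $\Delta$ via the gap identity.

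Third, the $R$-dependence is obtained by scaling. If we dilate the target by $R$, the cost $|x-y|^p$ for $|y|\le R$ scales like $R^p$, and $T$ scales like $R$. Balancing the homogeneity of both sides of the inequality forces the prefactor $R^{1-(p-1)\delta}$, since $\Delta$ is homogeneous of degree $p-1$ in the transport displacement at fixed source. I would check that every intermediate constant carries exactly this power, rather than rescaling $\Omega$, which is fixed.

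The main obstacle is the second step: converting the nonnegative gap into an $L^2$ control of $T_\mu-T_\nu$ with uniform constants, since $c_p$ is degenerate ($p>2$) or singular ($p<2$) at $x=y$. I would first try to import \cite[Theorem 1.2]{mischlertrevisan} essentially verbatim. Their proof only uses convexity of $\Omega$, the two-sided density bounds, and a uniform bound on the supports, which here is $B(0,R)$. The modification would then consist only in tracking the explicit powers of $R$.
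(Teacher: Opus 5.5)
Your plan to rerun the proof of \cite[Theorem 1.2]{mischlertrevisan} while tracking $R$ is the paper's plan. However, the proposal never establishes the $R$-dependence, which is the only content of the statement beyond Mischler--Trevisan, and the mechanism you offer for it fails. Homogeneity is not available for $p\neq 2$. With $\Omega$ fixed, if $T$ is $c_p$-optimal from $\rho$ to $\mu$, then $RT$ is in general not $c_p$-optimal from $\rho$ to $(RT)_\#\rho$; this works only for $p=2$, where $R\nabla u=\nabla(Ru)$. Dilating source and target together just moves the untracked dependence onto the source domain $\Omega/R$ and its density bounds. So ``$\Delta$ is homogeneous of degree $p-1$'' is only a heuristic, and nothing forces the prefactor $R^{1-(p-1)\delta}$.

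What is needed is to push one a priori bound through every constant of Mischler--Trevisan. From $\phi_\mu(x)=\inf_{|y|\le R}|x-y|^p-\psi_\mu(y)$, and symmetrically for $\psi_\mu$, one gets $\Lip(\phi_\mu),\Lip(\psi_\mu)\le C_\rho R^{p-1}$. The zero-mean normalization then gives $\|\phi_\mu\|_{L^\infty(\rho)}\le C_\rho R^{p-1}$. For $p\ge 2$ this makes \cite[Theorem 3.6]{mischlertrevisan} read $\|\phi_\mu-\phi_\nu\|_{L^2(\rho)}^2\le CR^{p-1}|\Delta|$. The semiconcavity modulus of $c_p$-concave functions is $\gamma\sim R^{p-2}$; this is where your bound $\nabla^2c_p\lesssim R^{p-2}$ belongs, whereas you attached it to $p<2$. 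With these, one gets explicit powers of $R$ in the gradient interpolation and in $\|T_\mu-T_\nu\|_{L^2(\rho)}\lesssim\|\nabla\phi_\mu-\nabla\phi_\nu\|_{L^2(\rho)}^{1/(p-1)}$.

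For $1<p<2$ your second step fails, and the tracking needs ideas absent from your sketch. Here $D_x^2|x-y|^p\sim|x-y|^{p-2}$ is bounded below by $cR^{p-2}$ but blows up near the diagonal. So there is no semiconcavity bound $D^2\phi\le CR^{p-2}$ to feed into a Gagliardo--Nirenberg argument. Mischler--Trevisan replace it by a fractional regularity estimate on the potentials, and the admissible $\theta$ is correspondingly restricted to $(0,(p-1)/(p+1))$. The exponent $\theta(1-\frac1p)$ also does not come from H\"older continuity of $\nabla\phi\mapsto T$. For $p<2$ that map is locally Lipschitz, with constant $\sim R^{2-p}$ on $\{|\nabla\phi|\lesssim R^{p-1}\}$. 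The factor $1-\frac1p$ comes from the potential estimate \cite[(3.68)]{mischlertrevisan}, and $\theta$ from \cite[Proposition 4.10]{mischlertrevisan}.

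Making these $R$-explicit requires two further checks:
(i) The estimate \cite[(3.68)]{mischlertrevisan} carries the factor $1/\sup_{\beta>0}h(\beta)$, with $h(\beta)=\beta e^{-\alpha\beta}-Ce^{\alpha'\beta}\beta^p$ and $\alpha,\alpha'\sim\|\phi_\mu\|_\infty+\|\phi_\nu\|_\infty\sim R^{p-1}$. A fixed $\beta$ therefore yields constants exponential in $R$. Choosing $\beta=c(1+\alpha)^{-1}$ instead gives $\sup h\gtrsim R^{1-p}$, and then $\|\phi_\mu-\phi_\nu\|_{L^2(\rho)}\lesssim R^{(p-1)^2/p}|\Delta|^{1-1/p}$.
(ii) The constant in \cite[Proposition 4.5]{mischlertrevisan} must be shown to be $C_{\theta,\rho,p}(1+\Lip u+\Lip v)^{1-\theta}$, a dependence not stated there.

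Finally, there is a sign slip in your first step. With your convention $\phi_\mu(x)+\psi_\mu(y)\le|x-y|^p$, the sum of your two gaps equals $\int(\psi_\mu-\psi_\nu)\,\dd(\mu-\nu)=-\Delta$. It is this nonnegative quantity that the estimates control (compare Lemma \ref{l:boundW1-qmoment}).
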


\begin{corollary}\label{c:localizedMT}
Let $\rho$ satisfy the assumptions of the previous theorem, and let $p>1$. Let $\mu,\nu\in\Prob_p(\Rsp^d)$ and let $\rho'\leq\rho$. Define
\begin{equation}\label{e:mu'nu'}
\mu'=T_{\rho\to\mu,p\#}\rho',
\qquad
\nu'=T_{\rho\to\nu,p\#}\rho'.
\end{equation}
Assume that $\mu'$ and $\nu'$ are supported in $B(0,R)$ for some $R\geq1$. Then 
\[
\|T_{\rho\to\mu,p}-T_{\rho\to\nu,p}\|_{L^2(\rho')}
\leq
C R^{1-(p-1)\delta}
\left(\int (\psi_{\rho\to\mu,p} - \psi_{\rho\to\nu,p}) \dd(\nu-\mu)\right)^\delta,
\]
where $\delta$ and $C$ are as in the previous theorem.
\end{corollary}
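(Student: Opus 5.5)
The plan is to deduce Corollary~\ref{c:localizedMT} from Theorem~\ref{t:mt} applied to the compactly supported measures $\mu',\nu'$. The key structural fact is that restricting a $c_p$-optimal plan to a subset of the source keeps it optimal. So the plan $(\Id,T_{\rho\to\mu,p})_\#\rho'$ is $c_p$-optimal between $\rho'$ and $\mu'$, with the same potential $\phi_{\rho\to\mu,p}$, and similarly for $\nu$. Theorem~\ref{t:mt}, however, is stated for a reference measure $\rho$ whose density is bounded above and below on a convex support. Since $\rho'$ need not have this form, I will not change the reference measure. Instead I will redo the final step of the proof of Theorem~\ref{t:mt} with the weight $\rho'$ in place of $\rho$. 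That argument is a pointwise inequality, integrated against the reference measure.

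\textbf{Step 1: the pointwise inequality.} I expect the proof of Theorem~\ref{t:mt}, following \cite{mischlertrevisan}, to give a pointwise bound of the form
$$ |T_\mu(x)-T_\nu(x)|^{a}\lesssim R^{b}\,\bigl[\phi_\nu(x)+\psi_\nu(T_\mu(x))-|x-T_\mu(x)|^p\bigr]^{c}. $$
Here $T_\mu=T_{\rho\to\mu,p}$, $\phi_\nu=\phi_{\rho\to\nu,p}$, $\psi_\nu=\psi_{\rho\to\nu,p}$, and the bracket is a nonnegative $c_p$-Fenchel--Young gap. The bound uses only $|x|\le A$ for $x\in\Omega$ and the bounds $|T_\mu(x)|,|T_\nu(x)|\le R$ at the point $x$. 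Where necessary, it is combined with a Hölder interpolation and a regularity estimate on $\phi_\nu$ on the convex set $\Omega$, and only this part uses the density bounds on $\rho$.

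\textbf{Step 2: restricting to $\spt\rho'$.} Next I check that the pointwise bound holds for $\rho'$-a.e. $x$. This uses $\mu'=T_{\mu\#}\rho'$ and $\nu'=T_{\nu\#}\rho'$, both supported in $B(0,R)$, so $|T_\mu|,|T_\nu|\le R$ $\rho'$-a.e. The potentials themselves are still those associated to $\rho$, so any regularity input is available unchanged.

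\textbf{Step 3: integrating against $\rho'$.} I integrate the pointwise bound against $\rho'$ and use $\rho'\le\rho$ together with the nonnegativity of the gap to enlarge the integral to one against $\rho$. This yields
$$ \int G\,\dd\rho=\int\phi_\nu\,\dd\rho+\int\psi_\nu\,\dd\mu-\int\phi_\mu\,\dd\rho-\int\psi_\mu\,\dd\mu, $$
where $G$ denotes the gap and we used $\phi_\mu(x)+\psi_\mu(T_\mu x)=|x-T_\mu x|^p$. Symmetrizing by adding the same identity with $\mu$ and $\nu$ exchanged, the $\rho$-terms cancel and we obtain $\int(\psi_\mu-\psi_\nu)\,\dd(\nu-\mu)$, exactly the quantity in the statement. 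This mirrors \eqref{e:bregmankantofenchel} in the quadratic case. Finally, Jensen or Hölder on the probability space $\rho'$ (of mass at most one) converts the integrated bound into the stated $L^2(\rho')$ estimate with the same $\delta$ and $C$.

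The main obstacle is Step 1. I need to verify that the Mischler--Trevisan argument produces a \emph{pointwise} gap inequality that uses the support bound only at the point $x$, and not a global bound on $\spt\mu$. If their argument instead controls $\phi_\mu-\phi_\nu$ globally, say through an $L^\infty$ or Lipschitz bound on $\Omega$ that uses $\spt\mu\subset B(0,R)$, then I would replace it by a local version near $\spt\rho'$. Alternatively, I would use the fact that $c_p$-concave potentials on $\Omega$ are locally Lipschitz with constant controlled by $\sup|T|$ in a neighbourhood.
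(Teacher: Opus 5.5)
\textbf{The gap is Step~1.} Everything else in your plan depends on it, and it fails: there is no pointwise bound $|T_\mu(x)-T_\nu(x)|^{a}\lesssim R^{b}G(x)^{c}$ for general $\mu,\nu$, even when all supports lie in $B(0,R)$. Take $\nu=\frac12(\delta_{y_1}+\delta_{y_2})$ and $\mu=(\frac12+\e)\delta_{y_1}+(\frac12-\e)\delta_{y_2}$ with $\e>0$ small. Both maps split $\Omega$ into two cells along level sets of $f(x)=|x-y_1|^p-|x-y_2|^p$. The thresholds are $t_\mu=\psi_\mu(y_1)-\psi_\mu(y_2)>t_\nu=\psi_\nu(y_1)-\psi_\nu(y_2)$, so on the strip $\{t_\nu<f<t_\mu\}$ one has $T_\mu=y_1$ and $T_\nu=y_2$. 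There the correctly signed gap $|x-T_\mu x|^p-\phi_\nu(x)-\psi_\nu(T_\mu x)$ equals $f(x)-t_\nu$. This is arbitrarily small near $\{f=t_\nu\}$, while $|T_\mu(x)-T_\nu(x)|=|y_1-y_2|$. The integrated estimate survives only because the strip has small $\rho$-measure.

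This is why the Mischler--Trevisan argument behind Theorem~\ref{t:mt} is global. First, the dual gap controls $\|\phi_\mu-\phi_\nu\|_{L^2(\rho)}$ through a variance inequality on the convex support; this is where convexity and the density bounds enter. Then one passes to $\nabla\phi$, hence to $T$ via \eqref{e:otpotTmu}, by an interpolation inequality using Lipschitz and semiconcavity bounds on the whole domain. Your Step~2 is not right either. Those bounds ($\Lip\lesssim R^{p-1}$, semiconcavity constant $\lesssim R^{p-2}$) come from \eqref{e:cconv}, i.e.\ from $T_\mu,T_\nu$ taking values in $B(0,R)$. You know this on $\spt\rho'$ but not on $\Omega$, so the regularity inputs are not available unchanged. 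Any genuine localization therefore means running the whole argument on $\spt\rho'$ with $\rho'$ as reference measure, and your fallback does not avoid this.

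\textbf{The paper's route.} The paper does exactly this. Restriction preserves optimality with the same Kantorovich pairs, so Theorem~\ref{t:mt} applied to $(\rho',\mu',\nu')$ bounds $\|T_\mu-T_\nu\|_{L^2(\rho')}$ by the restricted dual gap $\int(\psi_\mu-\psi_\nu)\,\dd(\mu'-\nu')$. This is at most the full gap $\int(\psi_\mu-\psi_\nu)\,\dd(\mu-\nu)$. To see it, sum $\psi_\mu(T_\nu x)-\psi_\mu(T_\mu x)\le|x-T_\nu x|^p-|x-T_\mu x|^p$ with its symmetric counterpart and integrate against $\rho-\rho'\ge0$.

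The enlargement in your Step~3 is this same monotonicity argument and is fine, up to a sign. With \eqref{e:cptransform} one has $\phi+\psi\le c_p$, so your bracket is $\le 0$. The nonnegative gap is its negative, and its symmetrized integral is $\int(\psi_\mu-\psi_\nu)\,\dd(\mu-\nu)\ge0$; the $\dd(\nu-\mu)$ in the statement is a sign slip.

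Your concern that $\rho'$ need not satisfy the hypotheses of Theorem~\ref{t:mt} is legitimate. The corollary should be read with that assumption on $\rho'$, with $C$ then depending on $\rho'$. The assumption holds in the corollary's only use, $\rho'=\rho|_{\Omega_\eta}$, since $\Omega_\eta$ is convex. The fix is to add that hypothesis, not to look for a pointwise inequality.
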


\begin{proof}[Proof of \Cref{t:mt}]
The above statement is a consequence of \cite[Theorem 1.2]{mischlertrevisan}, except that the polynomial dependence in $R$ on \cite[Theorem 1.2]{mischlertrevisan} is not written explicitly there. We explain how to track  this dependence  in the proof of \cite[Theorem 1.2]{mischlertrevisan}. For simplicity, we write $T_\mu$ instead of $T_{\rho\rightarrow \mu,p}$.  Notice that it does not seem obvious to apply directly an homogeneity argument, unlike in the quadratic case: optimality of the map $T$ from $\rho$ to $\mu$ for $c_p$ does not imply that $L T$ is optimal from $\rho$ to $(LT)_{\#}\rho$ except when $p=2$.

Recall that 
\begin{equation}\label{e:otpotTmu}
T_\mu=\Id - \nabla \phi_\mu |\nabla\phi_\mu|^{\frac{1}{p-1}-1}   
\end{equation}
for some function $\phi_\mu$ \cite[Theorem 6.2.4]{ambrosio2005gradient}. Up to adding a constant to $\phi_\mu$ we may assume that $\phi_\mu$ has zero $\rho$-mean. 

The proof of Theorem \cite[Theorem 1.2]{mischlertrevisan} relies on \cite[Theorem 1.1]{mischlertrevisan}. Therefore, we need to trace the dependence of the constant $C$ appearing in the statement of \cite[Theorem 1.1]{mischlertrevisan}:
we prove that under the assumptions of \cite[Theorem 1.1]{mischlertrevisan}, 
\begin{equation}\label{e:stabpotavecR}
\|\phi_\mu-\phi_\nu\|_{L^2(\rho)}\leq A'R^{n'}W_1(\mu,\nu)^{\delta'}
\end{equation}
for some $A',n',\delta'>0$. The  proofs of both \cite[Theorem 1.1]{mischlertrevisan} and \cite[Theorem 1.2]{mischlertrevisan} separate the cases $p\geq 2$ and $1<p<2$. We also follow this dichotomy. 

An important observation, valid for any $p>1$, is that the Kantorovich potentials $\phi_\mu$ and $\psi_\mu$ have  Lipschitz constant $\leq C_\rho R^{p-1}$. This follows from the fact that 
\begin{equation}\label{e:cconv}
\phi_\mu(x)=\inf_{y\in B(0,R)} |y-x|^p-\psi_\mu(x)
\end{equation}
(and similarly for $\psi_\mu$ expressed in terms of $\phi_\mu$), see the first lines of the proof of Lemma \ref{l:boundW1-qmoment}.

\medskip

\emph{Case $p\geq 2$.} It is proved in \cite[Theorem 3.6]{mischlertrevisan} that if $\rho$ is bounded from above and below on a compact convex set with respect to  the Lebesgue measure (or more generally a log-concave measure), then 
$$
\|\phi_\mu-\phi_\nu\|_{L^2(\rho)}^2\leq C_{p,\rho}R^{p-1} \int (\psi_\mu-\psi_\nu) \dd(\nu-\mu)
$$
where $C_{p,\rho}$ only depends on $p$ and $\rho$. In the left-hand side, we used the fact that  $\phi_\mu$ and $\phi_\nu$ have zero $\rho$-mean to replace the variance by an $L^2(\rho)$ norm.

Let us explain how to turn this into an upper bound on $\|T_\mu-T_\nu\|_{L^2(\rho)}$, for $p\geq 2$ (similar to \cite[Theorem 4.3]{mischlertrevisan}, but with explicit constants). When $p=2$, we use \cite[Proposition 4.1]{mischlertrevisan} (also reported as \cite[Proposition 4.2]{mischlertrevisan}) and the fact that $\phi_\mu, \phi_\nu$ are Lipschitz with constant $\leq C_\rho R$ to obtain 
\begin{equation}\label{e:p-1partout}
\|T_\mu-T_\nu\|_{L^2(\rho)}\leq C_{p,\rho} R\left( \int (\psi_\mu-\psi_\nu) \dd(\nu-\mu)\right)^{\frac16}.
\end{equation}
Let us now consider the case $p>2$. The heart of the proof of \cite[Theorem 4.3]{mischlertrevisan} is that there exists
$\gamma>0$ such that, for any function $\phi$ that is $c$-concave with respect to the $p$-cost, the function
$x\mapsto \phi(x)-\gamma|x|^2$ is concave and Lipschitz. One can actually take $\gamma= C_{p,\rho}R^{p-2}$ due to the definition \eqref{e:cconv} of $c$-concave functions (for any $\phi_\mu,\phi_\nu$). Using again that $\phi_\mu$ and $\phi_\nu$ are Lipschitz with constant $C_\rho R^{p-1}$, we get that the constant $C$ appearing in \cite[Eq. (4.8)]{mischlertrevisan} can be taken to be $R^{\frac43(p-1)}$, meaning that we obtain, when $p>2$,
\begin{equation}\label{e:p-1partoutp}
\|T_\mu-T_\nu\|_{L^2(\rho)}\leq C_{p,\rho} R^{\frac56}\left( \int (\psi_\mu-\psi_\nu) \dd(\nu-\mu)\right)^{\frac{1}{6(p-1)}}.
\end{equation}

\medskip

\emph{Case $1<p<2$.}
To establish \eqref{e:stabpotavecR} in case $1<p<2$, we follow \cite[Section 3.3]{mischlertrevisan}. We start from \cite[Eq. (3.68)]{mischlertrevisan}:
\begin{equation}\label{e:phi0phi1hbeta}
\min\left(\|\phi_\mu-\phi_\nu\|_{L^2(\rho)}^2,\|\phi_\mu-\phi_\nu\|_{L^2(\rho)}^{\frac{p}{p-1}}\right)\leq \frac{1}{\sup_{\beta\in\R^+} h(\beta)}\int (\psi_\nu-\psi_\mu)\dd(\mu-\nu).
\end{equation}
The function $h$ which appears in this equation is
\begin{equation}\label{e:hbeta}
h(\beta)=\beta e^{-\alpha\beta}-C e^{\alpha'\beta}\beta^p.
\end{equation}
The parameters involved in this definition of $h$ are the following. First, the constant $C$ in \eqref{e:hbeta} is defined as $C=\gamma_p C_\rho$ where
$\gamma_p$ is defined in \cite[Eq. (3.36)]{mischlertrevisan} and depends only on $p$, and $C_\rho$ depends only on $\rho$ and comes from \cite[Lemma 2.2]{mischlertrevisan}. Second, $\alpha=2M_{\phi_\mu,\phi_\nu}$, and $\alpha'=\frac{5p-2}{2}M_{\phi_\mu,\phi_\nu}$ where
$M_{\phi_\mu,\phi_\nu}= \|\phi_\mu\|_{L^\infty(\rho)}+ \|\phi_\nu\|_{L^\infty(\rho)}$.  
Since $\phi_\mu$ is Lipschitz with constant $C_\rho'R^{p-1}$ and has zero $\rho$-mean, it follows that $\|\phi_\mu\|_{L^\infty(\rho)}\leq C_\rho'R^{p-1}$. By symmetry between $\mu$ and $\nu$, we have \begin{equation}\label{e:Mphimuphinu}
M_{\phi_\mu,\phi_\nu}=C_\rho' R^{p-1}
\end{equation}.

We will prove that 
\begin{equation}\label{e:hbetagrand}
\sup_{\beta\in\R^+} h(\beta)\geq c' R^{1-p}
\end{equation}
for some $c'$ depending only on $p,\rho$. For this, pick $\beta=c(1+\alpha)^{-1}$ where $c>0$ is small enough (depending only on $p$ and $\rho$) so that $e^{-c}\geq \frac12$ and $e^{c\frac{5p-2}{4}}\leq 2$ and $2Cc^p\leq \frac{c}{4}$. Then
$$
\begin{aligned}
    \sup_{\beta\in\R^+} h(\beta)\geq h(c(1+\alpha)^{-1}) &= c(1+\alpha)^{-1}e^{-c}-Ce^{c\frac{\alpha'}{\alpha}}c^p(1+\alpha)^{-p}\\
    &\geq \frac{c}{2} (1+\alpha)^{-1}-2Cc^p(1+\alpha)^{-p}\\
    &\geq \frac{c}{4}(1+\alpha)^{-1} \geq c'R^{1-p}.
\end{aligned}
$$
In the last inequality we used \eqref{e:Mphimuphinu} and $R\geq 1$.
Together with \eqref{e:Mphimuphinu}, this establishes \eqref{e:hbetagrand}. 

Plugging into \eqref{e:phi0phi1hbeta} and using that $\psi_\mu$ and $\psi_\nu$ is Lipschitz with a constant $\leq C_\rho'R^{p-1}$, we obtain
\begin{equation}\label{e:minpotW1}
\min\left(\|\phi_\mu-\phi_\nu\|_{L^2(\rho)}^2,\|\phi_\mu-\phi_\nu\|_{L^2(\rho)}^{\frac{p}{p-1}}\right)\leq C_\rho' R^{p-1}\int (\psi_\nu-\psi_\mu)\dd(\mu-\nu).
\end{equation}
Following \cite[Eq. (3.69)]{mischlertrevisan} we observe that $\|\phi_\mu-\phi_\nu\|_{L^2(\rho)}^{\frac{p}{p-1}}\leq C_\rho'R^{2-p}\|\phi_\mu-\phi_\nu\|_{L^2(\rho)}^2$. Plugging into \eqref{e:minpotW1}, we obtain 
\begin{equation}\label{e:1<p<2psinupsimu}
\|\phi_\mu-\phi_\nu\|_{L^2(\rho)}
\leq C_\rho' R^{\frac{(p-1)^2}{p}} \left(\int (\psi_\nu-\psi_\mu)\dd(\mu-\nu)\right)^{1-\frac1p}
\end{equation}
This yields \eqref{e:stabpotavecR} in this case.

We turn to \cite[Theorem 1.2]{mischlertrevisan} which, for $1<p<2$, is \cite[Theorem 4.4]{mischlertrevisan}. Recall that in this statement the parameter $\theta\in (0,(p-1)/(p+1))$ is fixed but arbitrary. We notice that the dependence of the constant $C$ in \cite[Proposition 4.5]{mischlertrevisan} is not correctly written, since as we will see it depends not only on $\theta$, $\X$, $p$, but also on the Lipschitz constants of $u$ and $v$. Let us show that we can take 
\begin{equation}\label{e:47}
C=C_{\theta,\rho,p}(1+{\rm Lip}(u)+{\rm Lip}(v))^{1-\theta}
\end{equation}
in \cite[Eq. (4.10)]{mischlertrevisan}.
First, in \cite[Lemma 4.6]{mischlertrevisan}, all constants only depend on $p$. Then one may take 
\eqref{e:47} in \cite[Lemma 4.7]{mischlertrevisan} (more precisely in \cite[Eq. (4.18)]{mischlertrevisan}). Indeed from the proof of \cite[Eq. (4.22)]{mischlertrevisan}, one has $[u']_{\alpha,1}\leq C(1+{\rm Lip}(u))$ where $C$ does not depend on $u$, and $\|u\|_{W^{1,\infty}}\leq {\rm Lip}(u)$. Plugging into \cite[Lemma 2.8]{mischlertrevisan} we obtain that the constant in \cite[Lemma 4.7]{mischlertrevisan} is \eqref{e:47}. Then using Hölder's inequality as in the conclusion of the proof of \cite[Proposition 4.5]{mischlertrevisan}, we conclude that one may take \eqref{e:47} in \cite[Eq. (4.10)]{mischlertrevisan}.

For the conclusion of the proof of \cite[Theorem 4.4]{mischlertrevisan} we proceed as follows: we replace \cite[Eq. (4.36)]{mischlertrevisan} by 
$$
\begin{aligned}
    \|T_\nu-T_\mu\|_{L^2(\X)}&\leq C_{p,d}(\Lip(\phi_\mu)^{\frac{2-p}{p-1}}+\Lip(\phi_\mu)^{\frac{2-p}{p-1}}) \|\nabla\phi_\nu-\nabla\phi_\mu\|_{L^2(\X)}\\
&\leq C_{p,q}R^{2-p}\|\nabla\phi_\nu-\nabla\phi_\mu\|_{L^2(\X)}
\end{aligned}
$$
where we use again that $\phi_\mu$ and $\phi_\nu$ have Lipschitz constant $\leq C_\rho R^{p-1}$.
Then, \cite[Eq. (4.37)]{mischlertrevisan}
becomes 
$$
\|T_\nu-T_\mu\|_{L^2(\X)}\leq  C_{p,d}R^{2-p}\|\nabla\phi_\nu-\nabla\phi_\mu\|_{L^2(\X)}\leq C_{p,\rho,\theta}R^{2-p}R^{(p-1)(1-\theta)}\|\phi_\mu-\phi_\nu\|_{L^2(\rho)}^\theta
$$
where the last inequality comes from \cite[Proposition 4.10]{mischlertrevisan} with the explicit constant \eqref{e:47}. Finally, using \eqref{e:1<p<2psinupsimu} we get
\begin{align*}
\|T_\nu-T_\mu\|_{L^2(\X)}&\leq C_{p,\rho,\theta}R^{2-p}R^{(p-1)(1-\theta)} R^{\frac{(p-1)^2}{p}\theta} \left(\int (\psi_\nu-\psi_\mu)\dd(\mu-\nu)\right)^{\theta(1-\frac1p)}\\
&=C_{p,\rho,\theta} R^{1-(p-1)\delta} \left(\int (\psi_\nu-\psi_\mu)\dd(\mu-\nu)\right)^{\delta}
\end{align*}
which concludes the proof.
\end{proof}

\begin{proof}[Proof of Corollary \ref{c:localizedMT}]
To lighten notation, we drop the dependence in the source measure $\rho$ and the exponent $p$ in the notation for transport maps and potentials. Let $\rho'\leq \rho$ and let $\mu'=T_{\mu\#}\rho'$, $\nu'=T_{\nu\#}\rho'$. The restrictions of $T_\mu$ and $T_\nu$ to $\spt(\rho')$ are optimal transport maps from $\rho'$ to $\mu'$ and $\nu'$, respectively, and the Kantorovich pairs $(\phi_\mu,\psi_\mu)$ and $(\phi_\nu,\psi_\nu)$ remain optimal for these restricted transport problems.

We note that since
$\phi_{\mu}(x) = \inf_{y} \abs{x-y}^p - \psi_\mu(y)$
and since the minimum is attained at $y=T_\mu(x)$, we have, taking $y = T_\nu(x)$,
$$ \psi_\mu(T_\nu(x))  - \psi_\mu(T_\mu(x)) \leq \abs{x-T_{\nu}(x)}^p -  \abs{x - T_{\mu}(x)}^p.$$
Summing this inequality with the inequality where $\mu$ and $\nu$ are switched, we obtain
$$  \psi_{\mu}(T_\nu(x)) -  \psi_\mu(T_\mu(x))+ \psi_\nu(T_\mu(x)) - \psi_\nu(T_\nu(x)) \leq 0.$$
Integrating this inequality against any non-negative measure $\tilde{\rho}$, we have 
$$ \int (\psi_\mu - \psi_\nu) \dd (T_{\mu\#}\tilde{\rho} - T_{\nu\#}\tilde\rho) \geq 0.$$
Setting $\tilde{\rho} = \rho - \rho'$, this implies in particular the desired monotonicity inequality:
$$ \int (\psi_\mu - \psi_\mu) \dd(\mu'-\nu') \leq \int (\psi_\mu - \psi_\nu) \dd(\mu-\nu).$$
\end{proof}

\bibliographystyle{plain}
\bibliography{references}

\end{document}